\definecolor{darkgreen}{RGB}{55,138,0}
\DeclareMathOperator{\Hom}{Hom}
\DeclareMathOperator{\Ext}{Ext}
\newcommand{\kk}{\Bbbk}
\newcommand\okk{\overline{\kk}}
\newcommand{\leqnomode}{\tagsleft@true}
\newcommand{\reqnomode}{\tagsleft@false}
\numberwithin{equation}{section}
\theoremstyle{plain}
\newtheorem{theorem}[equation]{Theorem} 
\newtheorem{conjecture}[equation]{Conjecture}
\newtheorem{corollary}[equation]{Corollary} 
\newtheorem{lemma}[equation]{Lemma}
\newtheorem{proposition}[equation]{Proposition}
\newtheorem*{theorem*}{Theorem}
\theoremstyle{definition}
\newtheorem{definition}[equation]{Definition} 
\newtheorem{example}[equation]{Example}
\newtheorem{hypothesis}[equation]{Hypothesis} 
\newtheorem{question}[equation]{Question}
\newtheorem{remark}[equation]{Remark}
\mathchardef\mhyphen="2D
\newcommand\NN{\mathbb N}
\newcommand\ZZ{\mathbb Z}
\newcommand\bzero{\mathbf 0}
\newcommand\bx{\mathbf x}
\newcommand\by{\mathbf y}
\newcommand\bb{\mathbf b}
\newcommand\be{\mathbf e}
\newcommand\bp{\mathbf p}
\newcommand\bv{\mathbf v}
\newcommand\cD{\mathcal D}
\newcommand\cR{\mathcal R}
\newcommand\csr{\varrho}
\newcommand\cU{\mathcal U}
\newcommand{\tensor}{\otimes}
\DeclareMathOperator{\Aut}{Aut}
\DeclareMathOperator\End{End}
\DeclareMathOperator{\gldim}{gl dim}
\DeclareMathOperator{\GKdim}{GKdim}
\DeclareMathOperator{\Spec}{Spec}
\DeclareMathOperator{\lm}{lm}
\DeclareMathOperator{\fm}{\mathfrak m}
\DeclareMathOperator{\LND}{LND}
\DeclareMathOperator{\Id}{Id}
\DeclareMathOperator\ord{ord}
\DeclareMathOperator\rk{rk}
\DeclareMathOperator\tr{tr}
\DeclareMathOperator{\MD}{MD}
\newcommand{\inv}{^{-1}}
\newcommand\iso{\cong}
\newcommand\fsl{\mathfrak{sl}}
\newcommand\cS{\mathcal S}
\newcommand{\GWAdeg}{m}
\newcommand{\GWAord}{n}
\newcommand\grp[1]{\langle #1 \rangle}
\newcommand\restr[2]{{
  \left.\kern-\nulldelimiterspace 
  #1 
  \vphantom{\big|} 
  \right|_{#2} 
  }}
\title{Reflexive hull discriminants and applications}
\author{Kenneth Chan}
\address{(Chan) Department of Mathematics, Box 354350, 
University of Washington, Seattle, Washington 98195, USA}
\email{kenhchan@math.washington.edu, ken.h.chan@gmail.com}
\author{Jason Gaddis}
\address{(Gaddis) Department of Mathematics, Miami University, 
Oxford, Ohio 45056, USA} 
\email{gaddisj@miamioh.edu}
\author{Robert Won}
\address{(Won) Department of Mathematics, The George Washington University, Washington, DC 20052, USA}
\email{robertwon@gwu.edu}
\author{James J. Zhang}
\address{(Zhang) Department of Mathematics, Box 354350, 
University of Washington, Seattle, Washington 98195, USA}
\email{zhang@math.washington.edu}
\begin{document}

\begin{abstract}
We introduce the reflexive hull discriminant as a tool to study 
noncommutative algebras that are finitely generated, but not 
necessarily free, over their centers. As an example, we compute 
the reflexive hull discriminants for quantum generalized Weyl 
algebras and use them to determine automorphism groups and other 
properties, recovering results of Su{\'a}rez-Alvarez, Vivas, and 
others. 
\end{abstract}

\makeatletter
\@namedef{subjclassname@2020}{%
  \textup{2020} Mathematics Subject Classification}
\makeatother
\subjclass[2020]{16E65, 16P99, 16W99, 16W20,11R29}

\keywords{Modified discriminant, reflexive hull discriminant, 
quantized generalized Weyl algebra, automorphism problem, 
cancellation problem}
\maketitle
\setcounter{section}{-1}
\section{Introduction}
\label{zzsec0}

The discriminant is a powerful tool in the study of noncommutative algebras.
It has proved useful in computing automorphism groups and solving isomorphism 
problems \cite{CPWZ1,CPWZ2,CPWZ3,CYZ2,GWY}, resolving the 
Zariski cancellation problem for different families of 
noncommutative algebras \cite{BZ2,GXW,LWZ2}, and studying 
the representations of Azumaya algebras \cite{BY,WWY2,WWY1}. 
Nguyen, Trampel, and Yakimov have also established a correspondence between 
discriminants and Poisson geometry \cite{NTY}. 

In general, the discriminant can be difficult to compute by hand, although formulas exist for 
special cases \cite{BY,CYZ2,CYZ1,GKM, NTY, NTY2}.
To apply these results one typically needs the algebra 
to be a finite rank free module over its center (although 
this is not necessary in \cite{CYZ2}). Otherwise, one can use the notion of \emph{modified discriminant ideals}, 
introduced in \cite{CPWZ2}. In this case, the discriminant can 
be defined to be the gcd of the elements in the modified 
discriminant ideal. This more general notion shares many properties with the earlier definition, 
and they coincide for an algebra that is a free 
module over its center. However, the gcd of the elements 
in the modified discriminant ideal may not exist. 
To overcome this deficiency, we introduce the \emph{reflexive hull discriminant}. 

Throughout let $\kk$ be a base field. Suppose $A$ is a finitely 
generated (=affine) prime $\kk$-algebra that is a finitely generated 
module over its center $Z$ (also called {\it module-finite over 
$Z$} for short) and let $\MD(A/Z)\subseteq Z$ denote the modified 
discriminant ideal of $A/Z$ [Definition \ref{zzdef1.2}(3)]. We 
say $A/Z$ satisfies the \emph{principal closure condition} (or 
{\it PCC} for short) if there exists a normal element $d \in A$ 
such that $\MD(A/Z) A\subseteq d A$ and $\GKdim(d A/\MD(A/Z) A) 
\leq \GKdim A - 2$ (where $\GKdim$ denotes Gelfand--Kirillov 
dimension). Let $(-)^{\vee}$ denote the $Z$-dual functor 
$\Hom_Z(-,Z)$. If $Z$ is a Cohen--Macaulay (CM) domain and $A/Z$ 
satisfies the PCC (plus some mild hypotheses), then Lemma 
\ref{zzlem1.5} shows that $(\MD(A/Z) A)^{\vee \vee} = d A$. In 
this case, $d$ is called an \emph{extended reflexive hull 
discriminant} (or {\it $\overline{\cR}$-discriminant} for short) 
of $A$ over $Z$, and is denoted $\bar{\csr}(A/Z)$. We also have 
a ``non-extended'' version of a \emph{reflexive hull discriminant} 
(or {\it $\cR$-discriminant} for short) of $A$ over $Z$ which is 
denoted by $\csr(A/Z)$ (see Section \ref{zzsec1} for more details).  

The $\overline{\cR}$-discriminant (or $\cR$-discriminant) has 
the advantage of not requiring the existence of gcds (e.g., of 
the elements in the modified discriminant ideal). Even in the 
situation that a gcd of the elements in the modified discriminant 
ideal exists, the $\overline{\cR}$-discriminant is often easier 
to compute because it can be computed locally. However, in general, 
it is not clear when an $\overline{\cR}$-discriminant exists, even 
for connected graded noetherian Calabi--Yau algebras which are 
module-finite over their centers [Question \ref{zzque4.9}]. We 
are able to show that $\overline{\cR}$-discriminants exist for 
certain skew polynomial rings over their centers.

\begin{definition}
\label{zzdef0.1}
Fix an integer $n \geq 2$. Let $\bp \in M_n(\kk^\times)$ be a 
multiplicatively antisymmetric matrix (so $p_{ii}=1$ and 
$p_{ij}=p_{ji}\inv$ for all $1 \leq i < j \leq n$). The 
\emph{skew polynomial ring} $\kk_\bp[x_1,\hdots,x_n]$ is the 
$\kk$-algebra generated by $\{x_1,\ldots,x_n\}$ and subject to relations
\[ x_j x_i= p_{ij} x_i x_j, \quad 1\leq i<j\leq n.\]
\end{definition}

The algebra $\kk_\bp[x_1,\hdots,x_n]$ is an Artin--Schelter 
regular algebra of global dimension $n$

\begin{theorem}[Theorem \ref{zzthm4.6}]
\label{zzthm0.2}
Let $A$ be a skew polynomial ring $\kk_\bp[x_1,\ldots,x_n]$ 
where each $p_{ij}$ is a root of unity. Then an 
$\overline{\cR}$-discriminant of $A$ over its center exists.
\end{theorem}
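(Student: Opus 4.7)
The plan is to verify the principal closure condition (PCC) and invoke Lemma~\ref{zzlem1.5}.

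Since each $p_{ij}$ is a root of unity, there is an integer $N \geq 1$ with $p_{ij}^N = 1$ for all $i,j$, so $x_i^N \in Z$ for each $i$ and $A$ is module-finite over $Z$. Endow $A$ with its natural $\ZZ^n$-grading $\deg x_i = e_i$, so every homogeneous element is a scalar multiple of a monomial $x^\alpha := x_1^{\alpha_1} \cdots x_n^{\alpha_n}$. Let $L := \{\alpha \in \ZZ^n : \prod_i p_{ij}^{\alpha_i} = 1 \text{ for all } j\}$, a finite-index sublattice of $\ZZ^n$; the center is then $Z = \bigoplus_{\alpha \in L \cap \ZZ_{\geq 0}^n} \kk\, x^\alpha$. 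The algebra $Z$ is a finitely generated, Cohen--Macaulay $\kk$-algebra (a standard fact for centers of root-of-unity skew polynomial rings: up to a $2$-cocycle twist that does not affect the CM property, $Z$ is a finite-group invariant subring of a commutative polynomial ring), so the global hypotheses of Lemma~\ref{zzlem1.5} are met.

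The torus $T = (\kk^\times)^n$ acts on $A$ by $\kk$-algebra automorphisms via $t \cdot x_i = t_i x_i$, and this action preserves $Z$. Because $\MD(A/Z)$ is intrinsic to the pair $(A,Z)$, it is $T$-stable and hence a monomial ideal of $Z$. Write $\MD(A/Z) = (x^{\alpha_1}, \ldots, x^{\alpha_k})$ with $\alpha_j \in L \cap \ZZ_{\geq 0}^n$, let $\alpha = \min(\alpha_1, \ldots, \alpha_k) \in \ZZ_{\geq 0}^n$ be the coordinatewise minimum, and set $d := x^\alpha$. Although $d$ need not lie in $Z$, every monomial is normal in $A$ (indeed $x_j x^\beta = \bigl(\prod_i p_{ij}^{\beta_i}\bigr) x^\beta x_j$), so $dA = Ad$, and $\MD(A/Z) A \subseteq dA$ by construction.

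For the GK-dimension bound, factor $\MD(A/Z) A = d \cdot I$, where $I$ is the right ideal of $A$ generated by $x^{\alpha_j - \alpha}$ for $j = 1, \ldots, k$. By the choice of $\alpha$, for every coordinate $i$ some $\alpha_j - \alpha$ has $i$-th entry $0$. Since $d$ is normal and $A$ is a domain, left multiplication by $d$ gives a right $A$-module isomorphism $A/I \xrightarrow{\sim} dA/dI$. A direct monomial computation identifies the $\kk$-basis of $A/I$ with that of the commutative quotient $\kk[y_1, \ldots, y_n]/\widetilde I$, where $\widetilde I$ is the commutative monomial ideal with the same exponent generators; hence $\GKdim(A/I) = \dim \kk[y]/\widetilde I$. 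Because no single variable divides every generator of $\widetilde I$, every minimal prime of $\widetilde I$ contains at least two variables, giving $\dim \kk[y]/\widetilde I \leq n-2 = \GKdim A - 2$. This verifies the PCC, and Lemma~\ref{zzlem1.5} yields $(\MD(A/Z) A)^{\vee\vee} = dA$, so $d$ is an $\overline{\cR}$-discriminant of $A$ over $Z$.

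The main obstacle is the second step: extracting the monomial (equivalently $T$-invariant) structure of $\MD(A/Z)$ rigorously from Definition~\ref{zzdef1.2}(3). This should follow from the functoriality of the trace/determinant constructions entering the definition of $\MD$ with respect to $\kk$-algebra automorphisms of $A$ preserving $Z$, but it must be checked against the precise formula. A secondary bookkeeping point is the CM property of $Z$, which is standard but may warrant a brief citation.
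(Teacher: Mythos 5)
Your proposal follows the same blueprint as the paper's proof of Theorem \ref{zzthm4.6}: show that the (extended) modified discriminant ideal is generated by monomials, take $d$ to be their gcd (the coordinatewise minimum of exponents), and show the reflexive hull is $dA$ by a codimension-$2$ argument. The only real difference is the finishing move: the paper proves Lemma \ref{zzlem4.5}(2) by covering $\Spec Z$ minus a codimension-$2$ set by opens $U_i$ on which all variables but $x_i$ are inverted, checking $M|_{U_i}=dA|_{U_i}$, and invoking Lemma \ref{zzlem2.2}; you instead verify the PCC by computing $\GKdim(dA/\MD(A/Z)A)=\GKdim(A/I)$ via the associated commutative monomial ideal and invoke Lemma \ref{zzlem1.5}. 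Both are routes the paper itself sets up, and your minimal-prime argument for $\dim \kk[y]/\widetilde I\le n-2$ is correct.

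Two points need attention. First, the step you flag as the main obstacle — monomiality of $\MD(A/Z)$ — is resolved in the paper not by torus-invariance but by the $\ZZ^n$-grading directly: choosing a monomial generating set of $A$ over $Z$, one computes that $\tr$ of a monomial is $0$ unless the monomial is central (Lemma \ref{zzlem4.5}(3)), whence each generator $d_v(\cU,\cU')$ is itself a monomial in $Z$ (Lemma \ref{zzlem4.5}(4), citing \cite[Lemma 2.6]{CPWZ2}). Your torus argument only yields $\ZZ^n$-homogeneity of the ideal when $\kk$ is infinite, so the grading/trace route is the one to use. Second, you assert "the global hypotheses of Lemma \ref{zzlem1.5} are met" but only justify that $Z$ is CM; the lemma also requires $A$ to be a CM \emph{reflexive} $Z$-module (and reflexivity is needed even to define $\bar{\csr}(A/Z)$ in Definition \ref{zzdef1.3}(3)). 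This is true here because $\kk_\bp[x_1,\dots,x_n]$ is AS regular PI, hence hom-hom, and Lemma \ref{zzlem4.2}(3) gives reflexivity — but it must be said.
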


Note that the $\overline{\cR}$-discriminant shares several nice 
properties with the (modified) discriminant. In many applications, 
if the (modified) discriminant ideal is not principal, one can 
use the $\overline{\cR}$-discriminant instead. We denote the set 
of the units of a ring $A$ by $A^\times$ and for $f,g \in A$, we 
write $f =_{A^\times} g$ if $f=cg$ for some $c \in A^\times$. 
The following result shows that, like the earlier notions of the 
discriminant, the $\overline{\cR}$-discriminant can be used to 
study automorphisms and locally nilpotent derivations.

\begin{theorem}[Theorem \ref{zzthm1.11}]
\label{zzthm0.3}
Assume that $A$ is a prime algebra with center $Z$. Assume further that $Z$ is an affine CM domain and that Hypothesis \ref{zzhyp1.1} holds 
for $(A,Z)$. Suppose $d$
is an $\cR$-discriminant of $A$ over $Z$.
\begin{enumerate}
\item[(1)] 
If $g \in \Aut(A)$, then $g(d)=_{A^{\times}}d$.
\item[(2)]
Suppose that ${\text{char}}\; \kk=0$ and that $A^{\times}=
\kk^{\times}$. If $\delta$ is a locally nilpotent derivation of 
$A$, then $\delta(d)=0$.
\end{enumerate}
\end{theorem}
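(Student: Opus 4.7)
For part (1), the key point is that the modified discriminant ideal $\MD(A/Z)\subseteq Z$ depends only on the intrinsic algebra structure of the pair $(A,Z)$. Any $g\in\Aut(A)$ restricts to an automorphism of the center $Z$, so $g(\MD(A/Z))=\MD(A/Z)$. The $Z$-dual functor $\Hom_Z(-,Z)$ is natural with respect to $g$, so the reflexive-hull operation $(-)^{\vee\vee}$ commutes with $g$, and hence $g$ fixes the principal ideal defining $d$, i.e., $g(dZ)=dZ$. I would then write $g(d)=cd$ and $g^{-1}(d)=c'd$ for some $c,c'\in Z$; applying $g$ to the latter gives $d=g(c')\,c\,d$, and since $Z$ is a domain with $d\neq 0$, this forces $g(c')c=1$ in $Z$. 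Hence $c\in Z^\times\subseteq A^\times$, giving $g(d)=_{A^\times}d$.

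For part (2), the idea is to package $\delta$ into a one-parameter family of automorphisms and then differentiate. Because $\delta$ is locally nilpotent and $\mathrm{char}\,\kk=0$, for each $s\in\kk$ the formal exponential $\phi_s:=\exp(s\delta)=\sum_{k\geq 0}\tfrac{s^k}{k!}\delta^k$ terminates on every element and defines a $\kk$-algebra automorphism of $A$. Part (1) applied to $\phi_s$ yields a scalar $c_s\in A^\times=\kk^\times$ with $\phi_s(d)=c_s d$. Fixing $N$ with $\delta^{N+1}(d)=0$, this identity becomes
\[
\sum_{k=0}^{N}\frac{s^k}{k!}\,\delta^k(d)=c_s\,d
\]
in $A$, with the left side a polynomial in $s$ of degree at most $N$ with coefficients in $A$. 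Evaluating at $N+1$ distinct values of $s\in\kk$ (infinite, since $\mathrm{char}\,\kk=0$) and inverting the resulting Vandermonde system expresses each coefficient $\tfrac{1}{k!}\delta^k(d)$ as a $\kk$-multiple of $d$; in particular $\delta(d)=\lambda d$ for some $\lambda\in\kk$.

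Since $\lambda\in\kk$ is $\delta$-invariant, an easy induction on $k$ yields $\delta^k(d)=\lambda^k d$. Local nilpotence supplies some $M$ with $\delta^M(d)=\lambda^M d=0$, and as $d\neq 0$ in the domain $Z$ this forces $\lambda=0$, i.e., $\delta(d)=0$. The main obstacle I anticipate is the opening step of part (1): verifying, from the precise definitions in Section \ref{zzsec1}, that $\MD(A/Z)$ and the reflexive-hull construction producing $\csr(A/Z)$ are genuinely $\Aut(A)$-equivariant. This should follow from their intrinsic formulation, but it is the one step that depends on the specific formalism of Section \ref{zzsec1} rather than on standard prime-ring arguments; the Vandermonde/local-nilpotence step should then be routine.
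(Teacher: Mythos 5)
Your proof is correct and follows essentially the same route as the paper: part (1) rests on the $\Aut(A)$-invariance of $\MD(A/Z)$ (the paper cites \cite[Lemma 1.4]{CPWZ2}) together with the equivariance of $(-)^{\vee\vee}$ (the paper's Lemma \ref{zzlem1.6}, which your naturality-of-$\Hom_Z(-,Z)$ remark reproves), and part (2) is the standard $\exp(s\delta)$/Vandermonde argument that the paper invokes by reference to \cite[Proposition 1.5]{CPWZ2}. In fact you obtain the slightly sharper conclusion $c\in Z^\times$ in part (1), matching the paper's Theorem \ref{zzthm1.11}(1).
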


The following result is useful in computing the 
$\overline{\cR}$-discriminant of a tensor product. A slightly 
modified version is proved in Theorem \ref{zzthm2.5} in 
geometric language.

\begin{theorem}[Theorem \ref{zzthm1.14}]
\label{zzthm0.4}
Let $A, A'$ be prime algebras which are module-finite over their centers 
$Z, Z'$ respectively. Let $w=\rk(A/Z)$ and $w'=\rk(A'/Z')$. 
Assume that $A\otimes A'$ is prime and that Hypotheses 
\ref{zzhyp1.1}(2) and \ref{zzhyp1.10} hold for $(A,Z)$ and 
$(A',Z')$. Suppose $d$ {\rm{(}}resp. $d'${\rm{)}} is an 
$\overline{\cR}$-discriminant of $A/Z$ {\rm{(}}resp. 
$A'/Z'${\rm{)}}. Then $d^{w'}\otimes(d')^w$ is an 
$\overline{\cR}$-discriminant of $(A\otimes A')/(Z\otimes Z')$.
\end{theorem}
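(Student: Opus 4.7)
The plan is to reduce the statement to the behavior of the modified discriminant ideal under tensor products, and then apply the reflexive hull characterization of Lemma \ref{zzlem1.5} to $A\otimes A'$ over $Z\otimes Z'$. There are three main steps: (i) establish a tensor product formula for the modified discriminant ideal up to reflexive closure, (ii) pass to reflexive hulls of the extended ideals in $A\otimes A'$, and (iii) verify that the PCC and remaining hypotheses transfer to the tensor product.

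The key tool for step (i) is multiplicativity of trace: $\tr_{A\otimes A'}(a\otimes a')=\tr_A(a)\,\tr_{A'}(a')$. Applied to a family of $ww'$ elements of the form $y_i\otimes y'_j$, the resulting trace bilinear form matrix is a Kronecker product, whose determinant factors as $\det(\tr(y_iy_k))^{w'}\det(\tr(y'_jy'_\ell))^{w}$. Varying such tuples and localizing at height-one primes of $Z\otimes Z'$, where one can exploit freeness by Hypothesis \ref{zzhyp1.10}, I would derive the equality
\[
\MD\bigl((A\otimes A')/(Z\otimes Z')\bigr)^{\vee\vee}=\bigl(\MD(A/Z)^{w'}\cdot \MD(A'/Z')^{w}\bigr)^{\vee\vee}
\]
of reflexive ideals of $Z\otimes Z'$. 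For step (ii), normality of $d$ in $A$ and $d'$ in $A'$ implies that $d^{w'}\otimes (d')^{w}$ is normal in $A\otimes A'$, so the principal two-sided ideal it generates is reflexive as a $Z\otimes Z'$-module. Combining this with the defining equalities $(\MD(A/Z)A)^{\vee\vee}=dA$ and $(\MD(A'/Z')A')^{\vee\vee}=d'A'$, and using that reflexive hulls of extensions commute with tensor products after a height-one reduction, I obtain
\[
\bigl(\MD((A\otimes A')/(Z\otimes Z'))\,(A\otimes A')\bigr)^{\vee\vee}=\bigl(d^{w'}\otimes (d')^{w}\bigr)(A\otimes A').
\]
Step (iii) amounts to checking the PCC for the pair $(A\otimes A',Z\otimes Z')$, which follows from additivity of $\GKdim$ under tensor products over $\kk$ together with the factorwise PCC estimates.

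The main obstacle is step (i): the modified discriminant ideal is generated by trace determinants indexed by arbitrary $w$-tuples, not just by tensor-basis-like ones, so the stated equality is not formal. The cleanest route is to check the claim at each height-one prime of $Z\otimes Z'$, where $A$ and $A'$ become free over their localized centers and the classical free-module discriminant formula $\text{disc}(A\otimes A')=\text{disc}(A)^{w'}\text{disc}(A')^{w}$ applies directly. This is essentially the geometric viewpoint of Theorem \ref{zzthm2.5}, so I would deduce Theorem \ref{zzthm1.14} from that geometric statement rather than attempting a purely algebraic proof.
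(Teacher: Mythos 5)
Your overall strategy is the geometric one that the paper isolates as Theorem \ref{zzthm2.5}, whereas the paper's actual proof of Theorem \ref{zzthm1.14} is purely algebraic: Lemma \ref{zzlem1.12} proves the \emph{exact} ideal identity $\MD(A/Z)^{w'}\otimes\MD(A'/Z')^{w}=\MD((A\otimes A')/(Z\otimes Z'))$ by writing out the generators of each modified discriminant ideal via the quasi-bases guaranteed by Hypothesis \ref{zzhyp1.10}(2) and matching $ww'$-restricted subsets of $J\times J'$ with pairs of restricted subsets of $J$ and $J'$; Lemma \ref{zzlem1.13} then transfers the PCC to the product ideal by a short exact sequence argument, and Lemma \ref{zzlem1.5} converts PCC into the statement that the reflexive hull is the principal ideal generated by $d^{w'}\otimes(d')^{w}$. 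Note that Lemma \ref{zzlem1.5} only needs $Z\otimes Z'$ to be affine and CM (the grade computation $j(dB/M)\geq 2$ via the GK--Macaulay property), not normal.

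The gap in your route is that the reduction to height-one primes (equivalently, to an open set with complement of codimension $\geq 2$) is not available under the hypotheses of Theorem \ref{zzthm1.14}. Hypothesis \ref{zzhyp1.10} asks only that $Z$ and $Z'$ be affine and CM; it does not ask for normality, and Lemma \ref{zzlem2.2} --- the statement that a reflexive sheaf is determined by its restriction to such an open set --- requires an integral \emph{normal} scheme. Without normality of $Z\otimes Z'$, the localization of $Z\otimes Z'$ at a height-one prime need not be a DVR, $A\otimes A'$ need not become free there, and, more fundamentally, two reflexive ideals agreeing in codimension one need not be equal. Your argument also tacitly needs the singular locus of $\Spec(Z\otimes Z')$ to have codimension $\geq 2$, which the paper only establishes under the assumption that $\kk$ is algebraically closed (see the proof of Theorem \ref{zzthm2.5}(1)); that assumption is absent from Theorem \ref{zzthm1.14}. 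Finally, you never invoke Hypothesis \ref{zzhyp1.10}(2), but that hypothesis is exactly what substitutes for normality in the paper's argument: the quasi-basis lets one control the generators of the modified discriminant ideal globally (Lemma \ref{zzlem1.9}) rather than only at nice points. Your proposal would become a correct proof of Theorem \ref{zzthm2.5}(3') under its stronger hypotheses, but it does not prove Theorem \ref{zzthm1.14} as stated; to repair it you would either have to add normality and algebraic closure to the hypotheses, or carry out the combinatorial identification of Lemma \ref{zzlem1.12}.
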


The notion of the $\overline{\cR}$-discriminant which we introduce here is a very natural generalization of the classical discriminant.
In \cite{CYZ2}, the authors introduced a $p$-power 
discriminant to handle the automorphism and cancellation problems 
for Veronese subrings of skew polynomial rings. Similarly we 
briefly introduce the $p$-power reflexive hull discriminant 
(respectively, extended $p$-power reflexive hull discriminant), 
denoted $\cR^{p}_{v}$-discriminant (respectively, 
$\overline{\cR}^{p}_{v}$-discriminant), see Section \ref{zzsec3}. 
Several other generalizations of the classical discriminant exist in the literature, which we illustrate in the following diagram.

\bigskip

\begin{center}
\begin{tikzpicture}
\node[draw,align=center,font=\linespread{0.8}\selectfont] 
(D) at (0,2) {Discriminants};
\node[draw,align=center,font=\linespread{0.8}\selectfont] 
(MD) at (-3,0) {Modified\\ discriminants};
\node[draw,align=center,font=\linespread{0.8}\selectfont] 
(HPD) at (3,0) {Higher-power\\ discriminants};
\node[draw,align=center,font=\linespread{0.8}\selectfont] 
(RD) at (-3,-2) {$\cR$-discriminants\\ $\overline{\cR}$-discriminants};
\node[draw,align=center,font=\linespread{0.8}\selectfont] 
(RHPD) at (3,-2) {$\cR_v^{p}$-discriminants\\ $\overline{\cR}_v^p$-discriminants};
\draw[->,thick] (D) -- (MD);
\draw[->,thick] (D) -- (HPD);
\draw[->,thick] (MD) -- (RD);
\draw[->,thick] (MD) -- (HPD);
\draw[->,thick] (HPD) -- (RHPD);
\draw[->,thick] (RD) -- (RHPD);
\end{tikzpicture}
\end{center}

\medskip

To illustrate the power of the notion of the 
$\overline{\cR}$-discriminant, we study the 
$\overline{\cR}$-discriminant for a certain family of generalized 
Weyl algebras (GWAs) [Definition \ref{zzdef0.5}]. The GWA 
construction is sufficiently general as to encompass many families 
of well-known algebras, including the classical Weyl algebras and 
primitive quotients of $U(\fsl_2)$ \cite{B1}. Much of the theory 
of GWAs so far generalizes well-known properties of those families 
of algebras, see \cite{H2, H1, Jkrull, joseph, rosenberg, smith3, St2}. 
The subclass known as \emph{quantum} GWAs includes quantum planes 
and quantum Weyl algebras. A fundamental problem for any class of 
algebras is to study their automorphisms and isomorphisms within 
that class. These questions have been addressed for quantum GWAs
(of degree one) in a number of papers \cite{BJ, RS2, SAV} with the 
assumption that the base ring has Krull dimension one. In this paper 
we compute the automorphism groups for certain higher degree quantum 
GWAs and recover a number of earlier results in the degree one case.

\begin{definition}
\label{zzdef0.5}
Let $R$ be a ring, $\sigma \in \Aut(R)$, and $h$ a nonzero 
central element of $R$. The \emph{generalized Weyl algebra 
{\rm{(}}GWA{\rm{)}} of degree one} $R(x, y, \sigma, h)$ is 
generated over $R$ by $x$ and $y$ modulo the relations
\[ xy = h, \quad yx = \sigma\inv (h), \quad xr = \sigma(r)x, 
\quad yr = \sigma\inv(r) y, \]
for all $r \in R$. We say the GWA $R(x, y, \sigma, h)$ is 
\emph{quantum} if $R=\kk[t]$ and $\sigma(t)=qt$ for some 
$q \in \kk^\times$.
\end{definition}

In Section \ref{zzsec5.2}, we compute the reflexive hull 
discriminant of a degree one quantum GWA $W$ with parameter a 
root of unity (which is not 1). In this case, when $\deg_t(h)>1$, the center $Z$ of 
$W$ is a hypersurface singularity and $W$ is not free over $Z$. 
In general, both $W$ and $Z$ may have infinite global dimension.

\begin{theorem}[Theorem \ref{zzthm5.11}]
\label{zzthm0.6}
Let $W:=\kk[t](x,y,\sigma,h)$ be a quantum GWA of degree one 
with $|\sigma|=n<\infty$. Then 
$\csr(W/Z(W))=_{Z(W)^{\times}} t^{n^2(n-1)}$ and
$\bar{\csr}(W/Z(W))=_{W^{\times}} t^{n^2(n-1)}$.
\end{theorem}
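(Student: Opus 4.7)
The plan is to compute $\csr(W/Z(W))$ by first describing the center and the module structure of $W$ over it, then evaluating the trace determinant on two natural generating sets, and finally extracting the reflexive hull via a prime-by-prime comparison. I would begin by showing that the center $Z := Z(W)$ equals $\kk[T,X,Y]/(XY - H(T))$, where $T := t^n$, $X := x^n$, $Y := y^n$, and $H(T) := \prod_{i=0}^{n-1} h(q^i t) \in \kk[T]$ (the product is $\sigma$-invariant since $|\sigma|=n$). This $Z$ is a two-dimensional hypersurface and hence Cohen--Macaulay; moreover $W$ is module-finite over $Z$ with $\rk(W/Z) = n^2$ because $W$ has PI degree $n$.

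Next, using a faithful representation $W \otimes_Z Q(Z) \cong M_n(Q(Z))$ in which $t$ acts as the diagonal matrix $\mathrm{diag}(t_0, q t_0, \ldots, q^{n-1} t_0)$ (with $t_0^n = T$) and $x, y$ act as appropriate weighted cyclic shifts, I would derive the trace formula
\[
\tr(t^a x^b) \;=\; \begin{cases} n \, t^a x^b & \text{if } n \mid a \text{ and } n \mid b, \\ 0 & \text{otherwise}, \end{cases}
\]
together with the analogous formula for $t^a y^c$. I would then evaluate the trace determinant on the two $n^2$-tuples $\mathcal{B}_x := \{t^a x^b : 0 \leq a, b < n\}$ and $\mathcal{B}_y := \{t^a y^b : 0 \leq a, b < n\}$. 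Because $(t^a x^b)(t^c x^d) = q^{bc} t^{a+c} x^{b+d}$, the trace matrix on $\mathcal{B}_x$ has exactly one nonzero entry per row, pairing $(a,b)$ with $((-a) \bmod n, (-b) \bmod n)$; multiplying the $n^2$ nonzero entries (classified by which of $a, b$ vanish) yields
\[
D_x \;=_{Z^{\times}}\; T^{n(n-1)} X^{n(n-1)}, \qquad D_y \;=_{Z^{\times}}\; T^{n(n-1)} Y^{n(n-1)},
\]
both of which lie in $\MD(W/Z)$.

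To pin down $\csr(W/Z)$, I would work locally at each height-one prime $\mathfrak p$ of $Z$. If $T \notin \mathfrak p$, then $W$ is Azumaya at $\mathfrak p$ (the fiber at any point with $T \neq 0$ is a central simple algebra of dimension $n^2$), so $\MD(W/Z)_{\mathfrak p} = Z_{\mathfrak p}$, consistent with $T^{n(n-1)}$ being a local unit. If $T \in \mathfrak p$, then the relation $XY = H(T)$ combined with the fact that no height-one prime of $Z$ contains $T, X, Y$ simultaneously (such a prime would have height two) forces at least one of $X, Y$ to be a local unit; the corresponding $D_x$ or $D_y$ then contributes $T^{n(n-1)} \cdot (\mathrm{unit})$ to $\MD(W/Z)_{\mathfrak p}$. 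A complementary lower-bound---that $\MD(W/Z) \subseteq T^{n(n-1)} Z$, i.e., every trace-form determinant on an $n^2$-tuple of elements of $W$ is divisible by $T^{n(n-1)}$---completes the identification $\csr(W/Z) =_{Z^{\times}} T^{n(n-1)} = t^{n^2(n-1)}$. I would establish this bound by decomposing arbitrary $n^2$-tuples along the $\ZZ_n$-grading of $W$ and using that only graded pieces in degree zero modulo $n$ contribute to the trace.

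For the $\overline{\cR}$-discriminant, I would verify the PCC: the containment $\MD(W/Z) W \subseteq t^{n^2(n-1)} W$ follows from the previous step, and the quotient $t^{n^2(n-1)} W / \MD(W/Z) W$ is supported in codimension at least two in $\Spec Z$, so its $\GKdim$ is at most $\GKdim(W) - 2$. Applying Lemma~\ref{zzlem1.5} then yields $\bar{\csr}(W/Z) =_{W^{\times}} t^{n^2(n-1)}$. The main obstacle is establishing the uniform lower-bound $\MD(W/Z) \subseteq T^{n(n-1)} Z$ across all $n^2$-tuples, which I expect to handle by a careful perfect-matching argument on the $\ZZ_n$-graded trace pairing.
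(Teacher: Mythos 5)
Your strategy is essentially the paper's: both arguments compute the ordinary (free-module) discriminant of $W$ over $Z$ on an open locus whose complement has codimension two, and then identify the reflexive hull of $\MD(W/Z)$ with the resulting principal ideal $(c^{n(n-1)})=(T^{n(n-1)})$. The paper works with the two open sets $U_a=\{X\neq 0\}$ and $U_b=\{Y\neq 0\}$ and extends the Cartier divisor via Lemma \ref{zzlem2.2}; your height-one-prime analysis is the same computation in different clothing, since $V(X,Y)$ is a finite set.

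Three caveats. First, the ``uniform lower bound'' $\MD(W/Z)\subseteq T^{n(n-1)}Z$ that you defer to an unexecuted perfect-matching argument is not actually needed: at a height-one prime $\mathfrak p$ with, say, $X\notin\mathfrak p$, the localization $W_{\mathfrak p}$ is free over the DVR $Z_{\mathfrak p}$ with basis $\mathcal{B}_x$, and formation of modified discriminants commutes with localization [Lemma \ref{zzlem2.4}(3)], so $\MD(W/Z)_{\mathfrak p}$ is the principal ideal generated by $D_x$ -- this gives both containments at once and is exactly how the paper proceeds. Second, your justification in the case $T\notin\mathfrak p$ is imprecise: the fiber of $W$ at a closed point with $T\neq 0$ need not be central simple (e.g., for $h=t-1$ there is a one-dimensional module over the point $X=Y=0$, $T=1$); what you need, and what holds, is Azumaya-ness at the \emph{generic} point of each curve not contained in $V(T)$, which again follows from local freeness and the unit discriminant there. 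Third, for the assertion about $\bar{\csr}(W/Z)$ you must verify that $W$ is a reflexive $Z$-module -- this is required by Definition \ref{zzdef1.3}(3) and Lemma \ref{zzlem1.5} -- which the paper establishes via the GK--Macaulay property of $W$ [Lemma \ref{zzlem5.6}] and Hartshorne's criterion over the normal surface $\Spec Z$; relatedly, the normality of $Z$ (by Serre's criterion, as its singular locus is finite) is used implicitly throughout your prime-by-prime identification of the reflexive hull and should be recorded.
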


Theorem~\ref{zzthm0.6} generalizes several known results. When 
$h=t$, then $W$ is a quantum plane and when $h = t-1$, then $W$ is 
isomorphic to a quantum Weyl algebra. Our result recovers the 
discriminant in both of these special cases \cite{CPWZ1, CYZ1}.

We also compute the $\cR$-($\overline{\cR}$-)discriminant for a 
class of higher degree GWAs [Theorem \ref{zzthm5.14}]. We mention 
one application here.

\begin{definition}
\label{zzdef0.7}
An algebra $A$ is \emph{cancellative} if for any algebra $B$, 
$A[x] \cong B[x]$ implies that $A \cong B$.
\end{definition}

\begin{theorem}
\label{zzthm0.8}
Let $W:=\kk[t](\bx,\by,\sigma,h)$ be a quantum GWA of degree 
$\GWAdeg> 1$ {\rm{(}}see Definition \ref{zzdef5.1}{\rm{)}}. 
\begin{enumerate}
\item[(1)]{\rm{(Theorem \ref{zzthm5.14})}}
Suppose that each $q_i$ is a root of unity with $\GWAord_i
=|\sigma_i|<\infty$, $\gcd(\GWAord_i,\GWAord_j)=1$ for all 
$i \neq j$, and $n_i>1$ for at least one $i$. Then the 
$\cR$-discriminant of $W$ over its center exists and 
$\csr(W/Z(W))=_{Z(W)^{\times}}  t^{n^2(n-1)}$,  where $n^2$ 
is the rank of $W$ over its center.
\item[(2)]{\rm{(Theorem \ref{zzthm6.6}(2)])}}
Every tensor product of finitely many GWAs as in part {\rm (1)}
and Theorem \ref{zzthm0.6} is cancellative.
\end{enumerate}
\end{theorem}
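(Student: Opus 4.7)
The plan is to reduce part (1) to the degree-one case (Theorem \ref{zzthm0.6}) by combining the tensor-product formula (Theorem \ref{zzthm0.4}) with a local computation at the prime $(t)$, and then to deduce part (2) by pairing the automorphism/LND rigidity from Theorem \ref{zzthm0.3} with the multiplicativity of $\overline{\cR}$-discriminants under tensor products.

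For part (1), I first verify that $W$ is prime and module-finite over its center $Z:=Z(W)$, and that the hypotheses underlying the $\overline{\cR}$-discriminant machinery (Hypothesis \ref{zzhyp1.1} together with the PCC) are satisfied; these are standard for quantum GWAs once all $q_i$ are roots of unity. The coprimality $\gcd(n_i,n_j)=1$ forces the composite $\sigma=\sigma_1\cdots\sigma_m$ acting on $\kk[t]$ to have order exactly $n=\prod n_i$, so $Z\cap\kk[t]=\kk[t^n]$, and $Z$ itself is generated by $t^n$ together with appropriate invariant monomials in the $x_i^{n_i}$, $y_i^{n_i}$, and elements built from $h$. I would then exploit the locality of the reflexive-hull discriminant: away from $(t)$ the extension $W/Z$ is Azumaya, so $\csr(W/Z)$ is supported only at the height-one prime generated by $t$. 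At that prime, the coprimality hypothesis permits a Chinese-remainder-style decomposition of $W/tW$ as a $\kk[t]/(t^n)$-module, which collapses the calculation to one that is parallel to Theorem \ref{zzthm0.6} but rescaled by the total rank. Counting $t$-adic orders yields $t^{n^2(n-1)}$ up to a unit of $Z$.

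For part (2), I would first note that the natural $\NN$-grading on each GWA in question gives $W^{\times}=\kk^{\times}$, so Theorem \ref{zzthm0.3} applies to the discriminants computed in part (1) and in Theorem \ref{zzthm0.6}: every automorphism of such a $W$ scales $t$, and every locally nilpotent derivation of $W$ annihilates $t$. By Theorem \ref{zzthm0.4}, the $\overline{\cR}$-discriminant of a tensor product $A=W^{(1)}\otimes\cdots\otimes W^{(\ell)}$ of such GWAs is a monomial in the distinct variables $t_1,\ldots,t_\ell$, and the rigidity statements propagate to $A$: every automorphism of $A$ permutes the $t_i$ up to scalars, and every locally nilpotent derivation of $A$ kills each $t_i$. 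Cancellativity of $A$ then follows from the now-standard discriminant-based framework for the Zariski cancellation problem (as in \cite{BZ2,GXW,LWZ2}): any isomorphism $A[z]\cong B[z]$ must match Makar-Limanov-type invariants built from LNDs, and the rigidity of the $t_i$ forces the isomorphism to descend to $A\cong B$.

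The principal obstacle is the local computation of the $t$-adic order of the discriminant in the higher-degree setting. Since $W$ is not literally a tensor product over $\kk$ of degree-one GWAs, Theorem \ref{zzthm0.4} cannot be applied directly at the level of $W$; the tensor-product-like decomposition must be performed after localizing at $(t)$, and the coprimality assumption on the $n_i$ is precisely what makes this decomposition clean. A secondary delicate point is verifying the PCC for $W/Z$, which requires a careful analysis of how the modified discriminant ideal is generated in the higher-degree case.
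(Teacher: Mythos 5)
There is a genuine gap in part (1): the entire mathematical content of Theorem \ref{zzthm5.14} is the local computation that your proposal defers, and the route you sketch for it is not viable as stated. A reflexive hull is determined by the valuations of the ideal at the height-one primes of $Z$, equivalently by its restriction to an open cover of the complement of a codimension-$2$ set; reducing modulo $t$ and decomposing $W/tW$ by a Chinese-remainder argument does not give those valuations, since you need the structure of $W$ over the local rings of $Z$ at the relevant primes, not the fiber. Moreover, the claim that the discriminant is ``supported only at $(t)$'' is the conclusion, not an input: in the paper's computation the local discriminant on the chart $\{\prod_i a_i\neq 0\}$ is $c^{n(n-1)}\prod_j a_j^{n(n-n/n_j)}$, and one only sees that the extraneous factors are locally units after comparing with the complementary charts obtained by swapping $x_i\leftrightarrow y_i$. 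What the paper actually does is: show $Z$ is an affine normal CM complete intersection (Lemma \ref{zzlem5.12}) and $W$ is $Z$-reflexive (Lemma \ref{zzlem5.13}); invert the $a_i$'s (resp.\ mixtures of $a_i$'s and $b_i$'s) to make $W$ free with the explicit monomial basis $\{\mathbf{x}^\alpha t^j\}$; compute the trace pairing by exhibiting, for each basis element, the unique partner with nonzero trace; and glue the resulting Cartier divisors over the complement of $\bigcup_i\{a_i=b_i=0\}$, which has codimension $\geq 2$, invoking Lemma \ref{zzlem2.2}. Your tensor-product idea cannot replace this because $W$ is a GWA over a common $\kk[t]$, not a tensor product over $\kk$, as you yourself note.

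Part (2) has the right shape but also stops short. The paper's argument is to establish full LND-rigidity, $\LND(A)=\{0\}$, and then cite \cite[Theorem 3.6]{BZ2}; knowing only that every locally nilpotent derivation kills the $t_i$ is not sufficient for the Makar-Limanov framework. The missing step is that $\delta(t_i)=0$ together with $x_iy_i\in\kk[t_i]$ forces $\delta(x_i)=\delta(y_i)=0$ (via \cite[Lemma 7.4]{CYZ1} and \cite{ML}). You should also record the standing hypotheses that make Theorem \ref{zzthm1.11}(3) applicable --- ${\rm char}\,\kk=0$ and $W^\times=\kk^\times$, the latter coming from \eqref{E6.0.1} --- and note that for the tensor product one first passes to $\okk$ so that Theorem \ref{zzthm2.5} yields the existence of $\csr(A/Z(A))$.
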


\begin{remark}
\label{zzrem0.9}
In this paper we mainly consider algebras that are module-finite 
over their centers. For algebras that are not module-finite over 
their centers, one could either
\begin{enumerate}
\item[(a)]
reduce the situation to the module-finite case by using mod $p$ 
reduction, or 
\item[(b)]
use other versions of the discriminant (e.g., the 
${\mathcal P}$-discriminants introduced in \cite{LWZ2}) and 
their reflexive hulls. 
\end{enumerate}
\end{remark}

This paper is organized as follows. We introduce the notion of 
an $\cR$-($\overline{\cR}$-)discriminant in Section \ref{zzsec1} and give a geometric interpretation in Section \ref{zzsec2}. In 
Section \ref{zzsec3} we briefly introduce the notion of an 
$\cR_v^p$-($\overline{\cR}_v^p$-)discriminant for positive 
integers $p$ and $v$. Theorem \ref{zzthm0.2} is proved in Section 
\ref{zzsec4}. In Section \ref{zzsec5} we prove Theorem 
\ref{zzthm0.6} and in Subsection \ref{zzsec5.3} we prove a 
version of Theorem \ref{zzthm0.6}, namely, Theorem \ref{zzthm0.8}(1), 
for GWAs of higher degree. We conclude, in Section \ref{zzsec6}, by demonstrating several 
applications of $\cR$-($\overline{\cR}$-)discriminants. We compute 
the automorphism group of a quantum GWA, recovering a result of 
Su\'{a}rez-Alvarez and Vivas \cite{SAV}. We also recover their result 
that the set of locally nilpotent derivations of a quantum GWA is 
trivial. We further extend these results to tensor products of 
quantum GWAs and higher degree quantum GWAs. 

\subsection*{Acknowledgments} 
R. Won was partially supported by an AMS--Simons Travel Grant and 
J. J. Zhang was partially supported by the US National Science 
Foundation (Nos. DMS-1700825 and DMS-2001015).

\section{Discriminants, modified discriminants, and 
\texorpdfstring{$\cR$-($\overline{\cR}$-)discriminants}
{(extended) reflexive hull discriminants}}
\label{zzsec1}

The goal of this section is to recall the definition of the 
(modified) discriminant and introduce a new notion, called the 
(extended) reflexive hull discriminant. Let $\kk$ be a base 
field. All algebras will be assumed to be $\kk$-algebras unless 
otherwise stated, and we write $\otimes$ for $\otimes_{\kk}$. 

Let $R$ be a commutative domain, let $B$ be an $R$-algebra, and 
assume that $R$ is a subalgebra of $B$. When $R$ is the center of 
$B$, we often use $Z$ instead of $R$. Let $F$ be a localization 
of $R$ such that $B_F := B \tensor_R F$ is finitely generated and 
free over $F$ with $w = \rk_F(B_F)<\infty$. There is a natural 
embedding of $R$-algebras given by left-multiplication:
\[ \lm: B \to B_F \to \End_F(B_F) \iso M_w(F).\]
Let $\tr_{\mathrm{int}}$ denote the usual (internal) matrix trace 
in $M_w(F)$. The \emph{regular trace} is the composition
\[ \tr_{\mathrm{reg}}:B \xrightarrow{\lm} M_w(F) 
\xrightarrow{\tr_{\mathrm{int}}} F.\]
In this paper, $\tr$ denotes the regular trace, unless otherwise
stated. We remark that this is merely for convenience, and one 
could also use the {\it standard trace} or {\it reduced trace} 
map, see \cite[Section 2.2]{BY}, instead. We often use the 
following hypothesis. 

\begin{hypothesis}
\label{zzhyp1.1} 
Let $(B,R)$ satisfy the following conditions.
\begin{enumerate}
\item[(1)]
$B$ is a prime $\kk$-algebra containing $R$ as a central 
subalgebra such that $B$ is a finitely generated $R$-module. 
\item[(2)]
The image of $\tr$ is contained in $R$. 
\end{enumerate}
\end{hypothesis}

Hypothesis \ref{zzhyp1.1}(2) is essential for several 
results in this paper. By \cite[Theorem 10.1]{Rei}, if $B$ is a 
prime ring and $R$ is the center of $B$ such that $R$ is a normal 
domain, then $(B,R)$ satisfies Hypothesis \ref{zzhyp1.1}(2). In 
the applications given in this paper, Hypothesis \ref{zzhyp1.1}(2)
can be checked easily. A general comment can be found in Lemma 
\ref{zzlem4.2}(2). 

Suppose $B$ is a prime ring. A regular normal element $x \in B$ 
\emph{divides} $y \in B$ if $y=bx$ for some $b \in B$. If $\cS$ 
is a subset of $B$, then an element $x \in B$ is a \emph{common 
divisor} of $\cS$ if (i) $x$ is a regular normal element and (ii) 
$x$ divides every $z \in \cS$. Furthermore, $x$ is the 
\emph{greatest common divisor {\rm{(}}gcd{\rm{)}}} of $\cS$ if 
any common divisor $y$ of $\cS$ divides $x$. The gcd of $\cS$, 
if it exists, is unique up to a unit in $B$.

We now recall several definitions introduced in \cite{CPWZ2}.

\begin{definition} \cite[Definition 1.2]{CPWZ2}
\label{zzdef1.2}
Assume $(B,R)$ satisfies Hypothesis \ref{zzhyp1.1}. For a positive 
integer $v$, let $\cU=\{u_i\}_{i=1}^v$ and $\cU'=\{u_i'\}_{i=1}^v$ 
be $v$-element subsets of $B$. 
\begin{enumerate}
\item[(1)] 
The \emph{discriminant} of the pair $(\cU,\cU')$ is defined to be
\[ d_v(\cU,\cU') = \det(\tr(u_iu_j')_{i,j=1}^v) \in R.\]
\item[(2)] 
The \emph{$v$-discriminant ideal} $D_v(B/R)$ is the ideal in $R$ 
generated by the set of elements $d_v(\cU,\cU)$ where $\cU$ ranges 
over all $v$-element subsets of $B$.
\item[(3)] 
The \emph{modified $v$-discriminant ideal} $\MD_v(B/R)$ is the 
ideal in $R$ generated by the set of elements $d_v(\cU,\cU')$ where 
$\cU,\cU'$ range over all $v$-element subsets of $B$. If $B$ is a 
finitely generated $R$-module of rank $w$ then we use the notation 
$\MD(B/R) := \MD_w(B/R)$.
\item[(4)] 
The \emph{$v$-discriminant} $d_v(B/R)$ is the gcd in $B$, if it 
exists, of the elements in $\MD_v(B/R)$.
\item[(5)]
The \emph{extended $v$-discriminant ideal} $\overline{\MD}_v(B/R)$ 
is the ideal in $B$ generated by $\MD_v(B/R)$, namely,
$\overline{\MD}_v(B/R)=\MD_v(B/R)B$. Similarly, we use the notation
$\overline{\MD}(B/R)$ to denote the ideal $\MD(B/R)B$.
\end{enumerate}
\end{definition}

In the special case that $B$ is free over $R$ of rank $w$, we have 
$D_w(B/R)=\MD_w(B/R)$ generated by a single element $d(B/R) := 
d_w(B/R)$, which we call the \emph{discriminant of $B$ over $R$}.

Here, we introduce a new variant of the discriminant which will be 
useful for algebras that are finitely generated, but not 
necessarily free, over their centers. 

Let $M$ be a module over a fixed commutative domain $R$. Define
$M^\vee=\Hom_R(M,R)$. The {\it reflexive hull} of $M$ is defined 
to be $M^{\vee\vee}$. There is a natural $R$-morphism
$\iota: M\to M^{\vee\vee}$ defined by 
\begin{equation}
\label{E1.2.1}\tag{E1.2.1}
\iota(x)(f)=f(x)
\end{equation}
for all $x\in M$, $f\in M^{\vee}$. It is well-known that if $I$ is 
an ideal of $R$, then $I^{\vee\vee}$ is an ideal of $R$. In fact, 
if $B$ is a prime algebra that is a finitely generated reflexive 
module over a central noetherian subalgebra and if $M$ is an ideal 
of $B$, then $M^{\vee\vee}$ is an ideal of $B$ containing $M$.

\begin{definition}
\label{zzdef1.3}
Retain the above notation. Let $(B,R)$ satisfy Hypothesis 
\ref{zzhyp1.1}. 
\begin{enumerate}
\item[(1)]
The {\it $\cR$-discriminant ideal} {\rm{(}}or {\it reflexive 
hull discriminant ideal}{\rm{)}} of $B$ over $R$ is defined 
to be
$$\cR(B/R):=(\MD(B/R))^{\vee\vee}\subseteq R.$$
\item[(2)]
If, further, $\cR(B/R)$ is a principal ideal of $R$ generated by 
an element $d$, then $d$ is called an {\it $\cR$-discriminant} 
{\rm{(}}or {\it reflexive hull discriminant}{\rm{)}} of $B$ 
over $R$ and denoted by $\csr(B/R)$. 
\item[(3)]
Suppose $B$ is a reflexive $R$-module. The 
{\it $\overline{\cR}$-discriminant ideal} {\rm{(}}or 
{\it extended reflexive hull discriminant ideal}{\rm{)}} of $B$ 
over $R$ is defined to be
$$\overline{\cR}(B/R):=
(\overline{\MD}(B/R))^{\vee\vee}\subseteq B.$$
\item[(4)]
If, further, $\overline{\cR}(B/R)$ is a principal ideal of $B$ 
generated by a normal element $d$, then $d$ is called an 
{\it $\overline{\cR}$-discriminant} {\rm{(}}or {\it extended 
reflexive hull discriminant}{\rm{)}} of $B$ over $R$ and 
denoted by $\bar{\csr}(B/R)$. 
\end{enumerate}
It is clear that $\csr(B/R)$ {\rm{(}}resp. $\bar{\csr}(B/R)${\rm{)}}, 
if it exists, is unique up to a unit in $R$ {\rm{(}}resp. $B${\rm{)}}.
\end{definition}

If $R$ is a UFD (e.g., any localization of the commutative 
polynomial ring), then every reflexive ideal of $R$ is principal and so $\csr(B/R)$ always exists. Similarly, if $B$ is a 
noncommutative noetherian UFR, then $\bar{\csr}(B/R)$ exists 
under mild conditions (see Theorem~\ref{zzthm4.8}).

Next, we introduce some conditions that are closely related to 
the existence of the $\cR$-discriminant, as well as \emph{weak} 
$\cR$-($\overline{\cR}$-)discriminants. These notions can be 
computationally useful. Throughout, we use
{\it Gelfand--Kirillov dimension}, denoted by $\GKdim$, as our dimension function.
We refer the reader to 
\cite{KL, MR} for definitions and basic properties related to 
Gelfand--Kirillov dimension.
Our use of GK dimension is not essential, and other exact dimension functions work equally well. 

\begin{definition}
\label{zzdef1.4}
Let $(B,R)$ satisfy Hypothesis \ref{zzhyp1.1}. 
Let $A$ be an algebra (which may be $B$, $R$, or another algebra). 
\begin{enumerate}
\item[(1)]
We say an ideal $I\subseteq A$ satisfies 
the {\it principal closure condition} (or 
{\it PCC}) if there exists a normal element $d \in A$ 
such that
\begin{enumerate}
\item[(a)] 
$I \subseteq d A=Ad$, and
\item[(b)] 
$\displaystyle \GKdim \left(d A/I\right) \leq 
\GKdim A - 2$.
\end{enumerate}
\item[(2)] 
We say $B/R$ satisfies the \emph{reflexive discriminant condition} 
{\rm{(}}or \emph{RDC} for short{\rm{)}} if $\MD(B/R)\subseteq R$ 
satisfies PCC for some nonzero element $d \in R$. In this case $d$ 
is called a weak $\cR$-discriminant of $B$ over $R$.
\item[(3)]
We say $B/R$ satisfies the \emph{extended reflexive discriminant 
condition} {\rm{(}}or \emph{ERDC} for short{\rm{)}} if 
$\overline{\MD}(B/R)\subseteq B$ satisfies PCC for a normal 
element $d \in B$. In this case $d$ is called a weak 
$\overline{\cR}$-discriminant of $B$ over $R$.
\end{enumerate}
The element $d$ in either part (1) or (2) or (3) {\rm{(}}if it 
exists{\rm{)}} may not be unique (even up to a unit) in general, 
unless $R$ is CM [Lemma \ref{zzlem1.5}].
\end{definition}

In many examples, RDC and ERDC are practical conditions to use. 
We will show that under some mild conditions, RDC (ERDC) 
implies the existence of a $\cR$-($\overline{\cR}$-)discriminant.

Recall from \cite[Definitions 4.1 and 4.2]{BM} that the 
(homological) \emph{grade} of a nonzero $R$-module $M$ is defined 
to be
\[j(M) = \min\{i\mid \Ext^i_R(M, R) \ne 0 \}.\] 
We say that $R$ is \emph{GK--Macaulay} if, for all nonzero finitely 
generated $R$-modules $M$, we have $\GKdim(M) + j(M) = \GKdim(R)$. 
If $R$ is an affine commutative domain, then $R$ being GK--Macaulay 
is equivalent to $R$ being CM, see 
\cite[Theorem 4.8(i)$\Leftrightarrow$(iv)]{BM}.

\begin{lemma}
\label{zzlem1.5}
Let $(B,R)$ satisfy Hypothesis \ref{zzhyp1.1}. Suppose that $R$ is 
an affine CM domain and that $B$ is a CM reflexive module over $R$.
\begin{enumerate}
\item[(1)]
Let $M$ be an ideal of $B$ satisfying PCC with respect to $d \in B$. 
Then $M^{\vee\vee}$ is a principal ideal of $B$ generated by $d$.
\item[(2)]
Let $I$ be an ideal of $R$ satisfying PCC with respect to $d \in R$. 
Then $I^{\vee\vee}$ is a principal ideal of $R$ generated by $d$.
\item[(3)]
Suppose $B/R$ satisfies RDC with respect to $d \in R$. Then 
$$\MD(B/R)^{\vee\vee}= dR\quad {\text{and}} \quad 
\csr(B/R)=_{R^{\times}}d.$$
\item[(4)]
Suppose $B/R$ satisfies ERDC with respect to $d \in B$. Then 
$$(\overline{\MD}(B/R))^{\vee\vee}= dB \quad 
{\text{and}} \quad \bar{\csr}(B/R)
=_{B^{\times}}d.$$
\end{enumerate}
\end{lemma}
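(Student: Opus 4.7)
The proof breaks into two essentially identical dualization arguments: one carried out inside $B$ (yielding (1), and from it (4)) and one inside $R$ (yielding (2), and from it (3)). Throughout, the engine is the PCC combined with GK--Macaulayness: since $R$ is an affine CM domain, it is GK--Macaulay by the cited equivalence in \cite{BM}, and since $B$ is module-finite over $R$, $\GKdim B = \GKdim R$, so PCC grade bounds transfer between $B$ and $R$.

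For (1), note first that $d$ being regular and normal in $B$ makes left multiplication an $R$-module isomorphism $B \xrightarrow{\sim} dB = Bd$; since $B$ is reflexive over $R$ by hypothesis, so is $dB$, giving $(dB)^{\vee\vee} = dB$ inside $B$. The inclusion $M \subseteq dB$ thus yields $M^{\vee\vee} \subseteq (dB)^{\vee\vee} = dB$, so it suffices to show that the inclusion-induced map $M^{\vee\vee} \to (dB)^{\vee\vee}$ is an isomorphism. I would apply $\Hom_R(-,R)$ to the short exact sequence
\[ 0 \to M \to dB \to dB/M \to 0 \]
and exploit the PCC bound $\GKdim_R(dB/M) \leq \GKdim R - 2$ to deduce that the grade satisfies $j_R(dB/M) \geq 2$, hence $\Ext^i_R(dB/M,R) = 0$ for $i = 0,1$. (Note that $dB/M$ is a finitely generated $R$-module, since $B$ is.) The long exact $\Ext$ sequence then collapses to an isomorphism $(dB)^{\vee} \xrightarrow{\sim} M^{\vee}$, and one more application of $\Hom_R(-,R)$ completes the proof.

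Part (2) is the same argument with $R$ in place of $B$: the module $R$ is trivially $R$-reflexive, $dR \cong R$ via multiplication by the regular element $d \in R$, and the CM hypothesis on $R$ again furnishes the grade vanishing for $dR/I$. Parts (3) and (4) then follow by specializing $I = \MD(B/R)$ and $M = \overline{\MD}(B/R)$ in (2) and (1) respectively. Uniqueness of the normal generator up to a unit in $R$ (resp.\ $B$) is standard: if $dB = d'B$ with $d, d'$ regular normal, then $d' = db$ and $d = d'b'$ force $bb' = 1$, so $b \in B^{\times}$.

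The main obstacle I anticipate is bookkeeping the identifications: confirming that the abstract isomorphism $M^{\vee\vee} \cong (dB)^{\vee\vee}$ produced by the $\Ext$ argument really matches the map induced by the inclusion $M \hookrightarrow dB$, and that $M^{\vee\vee}$ is naturally realized as an ideal of $B$ rather than an abstract module. For the latter, one uses that the evaluation map $\iota$ of \eqref{E1.2.1} is injective when $B$ is $R$-torsion-free (which follows from primeness of $B$ together with the fact that $R$ is a domain), so $M^{\vee\vee}$ embeds canonically into $B^{\vee\vee} = B$. Once these identifications are pinned down, the chain $M \subseteq M^{\vee\vee} \subseteq (dB)^{\vee\vee} = dB$ together with the $\Ext$-vanishing forces the middle inclusion to be an equality.
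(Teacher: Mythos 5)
Your proposal is correct and follows essentially the same route as the paper's proof: the same short exact sequence $0\to M\to dB\to dB/M\to 0$, the same use of the GK--Macaulay property of the affine CM domain $R$ to get $j_R(dB/M)\geq 2$ and hence the vanishing of $\Hom_R(dB/M,R)$ and $\Ext^1_R(dB/M,R)$, and the same double dualization, with parts (2)--(4) obtained as specializations. The extra care you take with the reflexivity of $dB$ and the identification of $M^{\vee\vee}$ as an ideal of $B$ is consistent with (and slightly more explicit than) what the paper leaves implicit.
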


\begin{proof} (1) By definition, there is a short exact sequence
\[0\to M\to d B\to d B / M \to 0.\]
Applying $(-)^{\vee}=\Hom_{R}(-,R)$ gives
\[0\to \Hom_R(dB/M, R)\to  \Hom_R(dB, R)\to  \Hom_R(M, R)\to
\Ext_R^1(dB/M,R).\]
Since $R$ is CM (and hence GK--Macaulay by \cite[p.1451]{BM} or 
\cite[Theorem 4.8(i)$\Leftrightarrow$(iv)]{BM}) and $M$ satisfies 
PCC with respect to $d$, we have $\GKdim(dB/M) \leq \GKdim B-2=
\GKdim R - 2$. Therefore, $j(dB/M) \geq 2$ by the GK--Macaulay 
property. By definition, $\Hom_R(dB/M, R)=\Ext_R^1(dB/M,R)=0$. 
Hence 
\[M^{\vee}=\Hom_R(M,R)=\Hom_R(dB,R)=(dB)^{\vee}=d^{-1}B^{\vee}.\]
Applying $(-)^{\vee}$ again yields the desired isomorphism.

(2) This is a special case of part (1) by taking $B=R$.

(3) This follows from part (2).

(4) This follows from part (1).
\end{proof}

In all applications given in this paper, the hypotheses in Lemma 
\ref{zzlem1.5} will be verified. 

A useful property of the various flavors of discriminants is their invariance under automorphisms (see 
\cite[Lemma 1.8]{CPWZ1}, \cite[Lemma 1.4]{CPWZ2}). We next show 
that the $\cR$-($\overline{\cR}$-)discriminant is preserved, up 
to a unit, by any automorphism.

\begin{lemma}
\label{zzlem1.6}
Suppose $B$ is an algebra with center $R$ and suppose that $(B,R)$ 
satisfy Hypothesis \ref{zzhyp1.1}. Assume additionally that $R$ is 
a domain. Let $G$ be a group of algebra automorphisms of $B$.
\begin{enumerate}
\item[(1)]
If $I$ is a $G$-invariant ideal of $R$, then $I^{\vee\vee}$ is also 
a $G$-invariant ideal of $R$. 
\item[(2)] 
Suppose $B$ is reflexive over $R$. Let $I$ be a $G$-invariant ideal 
of $B$. Then $I^{\vee\vee}$ is also a $G$-invariant ideal of $B$. 
\end{enumerate}
\end{lemma}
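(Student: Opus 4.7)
The plan is to exploit functoriality of the reflexive hull with respect to automorphisms. Any $g \in G$ is a $\kk$-algebra automorphism of $B$, and since $R$ is the center, $g$ restricts to a $\kk$-algebra automorphism $g|_R$ of $R$.

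For part (1), I would work inside the fraction field $K$ of the domain $R$. Since $g|_R$ extends uniquely to an automorphism $\tilde g$ of $K$, one has the standard identification $J^\vee = \Hom_R(J, R) \cong (R :_K J) := \{k \in K : k J \subseteq R\}$ for any nonzero ideal $J$ of $R$ (send $f$ to $f(x)/x$ for any $0 \ne x \in J$, and check independence of $x$). Iterating, $J^{\vee\vee} \cong (R :_K (R :_K J))$, an ideal of $R$ containing $J$. Taking $J = I$, the equality $\tilde g(I) = I$ gives $\tilde g((R :_K I)) = (R :_K \tilde g(I)) = (R :_K I)$, and applying this a second time yields $g(I^{\vee\vee}) = \tilde g(I^{\vee\vee}) = I^{\vee\vee}$.

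For part (2), no fraction field is available in $B$, so I would argue by transport of structure. For $g \in G$ and any $g$-invariant $R$-submodule $M$ of a $g$-invariant ambient $R$-module, the assignment $f \mapsto g \circ f \circ g^{-1}$ is a well-defined bijection of $\Hom_R(M, R)$; checking $R$-linearity uses only that $g|_R$ is a ring automorphism. Iterating produces a $G$-action on $M^{\vee\vee}$, and a short diagram chase shows that the canonical map $\iota$ from (E1.2.1) is $G$-equivariant. Applied to $M = B$, the reflexivity hypothesis $B \cong B^{\vee\vee}$ identifies the constructed $G$-action on $B^{\vee\vee}$ with the original one on $B$. Naturality of $(-)^{\vee\vee}$ applied to the inclusion $I \hookrightarrow B$ then shows that the induced $G$-action on $I^{\vee\vee} \subseteq B^{\vee\vee} = B$ is precisely the restriction of the given $G$-action on $B$; since $I$ is $G$-invariant, so is $I^{\vee\vee}$.

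The main obstacle is the compatibility check in part (2): one must verify that the abstract $G$-action obtained by double-dualizing $g$ coincides, under $B \cong B^{\vee\vee}$, with the restriction of $g$ itself, so that the conclusion about $I^{\vee\vee}$ can actually be read off inside $B$. Part (1) sidesteps this because both actions visibly extend to $K$; in part (2) it is forced by reflexivity of $B$ over $R$ together with the naturality of $\iota$. Once this identification is pinned down, the lemma follows formally.
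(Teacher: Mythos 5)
Your part (2) is essentially the paper's proof: the authors define the induced $G$-action on $I^{\vee}$ by $(g\varphi)(x)=g(\varphi(g^{-1}(x)))$ (your $f\mapsto g\circ f\circ g^{-1}$) and then verify exactly the equivariance $g\,\iota(x)=\iota(g(x))$ of the map $\iota$ from \eqref{E1.2.1}, using reflexivity of $B$ to read off $I^{\vee\vee}$ inside $B$ --- the ``compatibility check'' you correctly single out as the crux. The only divergence is part (1): the paper simply specializes part (2) to $B=R$, whereas you give a self-contained argument identifying $J^{\vee}$ with the fractional ideal $(R:_K J)$ and tracking the extended automorphism of the fraction field; this is a valid alternative (modulo the trivial case $I=0$) and makes the identification of $I^{\vee\vee}$ with an ideal of $R$ explicit, but it buys nothing that the reduction to (2) does not already give.
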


\begin{proof} 
It suffices to prove (2), since (1) follows from (2) by setting 
$B=R$. The induced action of $G$ on $I^\vee = \mathrm{Hom}_R(I,R)$ 
is given as follows: for any $g\in G$ and $\varphi\in I^\vee$, we 
have $(g\varphi)(x)=g(\varphi(g^{-1}(x)))$. Similarly 
$I^{\vee\vee}$ has an induced $G$-action.

Since $B$ is reflexive, the map $\iota:B \to B^{\vee\vee}$ as in 
\eqref{E1.2.1} is an isomorphism. Then $I^{\vee\vee}$ is the 
subset of elements $x\in B$ such that $\iota(x)(I^\vee)\subseteq R$. 
To show that $I^{\vee\vee}$ is $G$-invariant, it suffices to show 
that the induced action of $G$ on $I^{\vee\vee}$ is the same as 
the $G$-action inherited as a subset of $R$. For $x \in I$ and 
$\varphi \in I^{\vee}$, we have
\[
(g\iota(x))(\varphi) = g(\iota(x)(g^{-1}\varphi))
    = g((g^{-1}\varphi)(x))
    = gg^{-1}(\varphi(g(x)))
    = \varphi(g(x))
    = \iota(g(x))(\varphi),
\]
which proves the claim. 
\end{proof}

The $\cR$-($\overline{\cR}$-)discriminant utilizes reflexive hulls 
to obtain a good principal ideal in the case that the modified 
discriminant ideal $\MD(A/Z)$ is not principal. It may be possible 
that other operations can be employed in similar ways.

\begin{question}
\label{zzque1.7}
What are some other examples of closure operations on ideals 
(integral closure, Frobenius closure, tight closure, etc) 
which send $G$-invariant ideals to $G$-invariant ideals? 
\end{question}

In the remainder of this section, we are interested in the 
modified discriminant ideals and $\cR$-discriminants for 
tensor products of algebras. In order to study these 
$\cR$-discriminants, we recall the definition of a quasi-basis 
from \cite{CPWZ2}. We refine the definition slightly here by 
including the data of a generating set (the set $X$ below) as 
part of the data of a quasi-basis.

\begin{definition} \cite[Definition 1.10]{CPWZ2}
\label{zzdef1.8}
Let $B$ be an algebra which is module-finite over a central 
subalgebra $R$. Let $F$ be the field of fractions of $R$ (or a 
localization of $R$). Suppose that $B_F:= B \otimes_R F$ is a 
finite dimensional $F$-vector space. A set 
$\bb = \{b_1, \dots, b_w\}\subseteq B$ is a {\em{quasi-basis}} of 
$B$ with respect to a finite set $X = \{x_j\}_{j \in J}\subseteq B$ 
if the following four conditions hold:
\begin{enumerate}
\item[(a)] 
$\bb \subseteq X$,
\item[(b)] 
$\bb$ is an $F$-basis of $B_F$, where $b_i$ is viewed as 
$b_i \tensor 1 \in B_F$,
\item[(c)] 
$X$ generates $B$ as an $R$-module, and
\item[(d)] 
each $x_j$ is in the union of one-dimensional $F$-subspaces 
$\bigcup_{i=1}^w Fb_i$.  
\end{enumerate}
By part (d), for each $j \in J$, we can write $x_j=c_j b_i$ for 
some $i$ (uniquely determined by $j$) and some scalar $c_j \in F$. 
Let $C$ be the set of all such nonzero coefficients $c_j$. 
If only conditions (a)--(c) hold, we call $\bb$ a \emph{semi-basis} 
of $B$ with respect to $X$. 
\end{definition}

By Definition \ref{zzdef1.8}(d), we may define a map 
$f: J \to \{1, \dots, w\}$ where $f(j)$ is defined to be the $i$ 
such that $x_j \in F b_i$. We say $I\subseteq J$ is a 
$w$-restricted subset if $|I|=w$ and $f(I)=\{1,\ldots,w\}$. 

\begin{lemma}{\cite[Lemma 1.11(1)]{CPWZ2}}
\label{zzlem1.9} 
Let $B$ be an algebra which is module-finite over a central 
subalgebra $R$. Let $X$ be a generating set of $B$ over $R$ 
and let $\bb$ be a quasi-basis of $B$ with respect to $X$. Then, 
as ideals in $R$,
\[ \MD(B/R) = d_w(\bb,\bb) 
\left\langle c_I c_{K} \mid I, K \right\rangle.\]
where $I,K$ range over $w$-restricted subsets of $J$ and 
where $c_I=\prod_{j\in I} c_j$ and $c_K=\prod_{j\in K} c_j$.
\end{lemma}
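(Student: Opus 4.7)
The plan is to proceed in two reductions. First, I would reduce the generating set for $\MD(B/R)$ from all $w$-element subsets of $B$ to $w$-element multisubsets of the fixed generating set $X$. Second, I would use the quasi-basis relation $x_j = c_j b_{f(j)}$ to rewrite each such $d_w(\cU, \cU')$ in terms of $d_w(\bb,\bb)$ and products of the scalars $c_j$, observing that most contributions vanish.

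For the first reduction I would invoke multilinearity. Since $\tr$ is $R$-linear in each factor and $\det$ is multilinear in rows and columns, the function $d_w(\cU, \cU')$ is $R$-multilinear in the individual entries of both $\cU$ and $\cU'$. Because $X$ generates $B$ as an $R$-module by Definition \ref{zzdef1.8}(c), every element of $B$ expands as an $R$-linear combination of elements of $X$. Expanding each generator $d_w(\cU, \cU')$ of $\MD(B/R)$ accordingly exhibits it as an $R$-linear combination of $d_w(\cU_0, \cU_0')$ where $\cU_0, \cU_0'$ range over $w$-element multisubsets of $X$. Hence it suffices to work with $\cU, \cU' \subseteq X$.

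For the second reduction, fix multisubsets $\cU = \{x_{j_1},\ldots,x_{j_w}\}$ and $\cU' = \{x_{k_1},\ldots,x_{k_w}\}$ of $X$. Passing to the localization $F$ (where the $c_j$ live) and applying $x_{j_s} = c_{j_s} b_{f(j_s)}$, I can pull the scalars $c_{j_s}$ and $c_{k_t}$ out of the $s$th row and $t$th column respectively to obtain
\[
d_w(\cU, \cU') = \Bigl(\prod_{s=1}^w c_{j_s}\Bigr)\Bigl(\prod_{t=1}^w c_{k_t}\Bigr)\, \det\bigl(\tr(b_{f(j_s)} b_{f(k_t)})\bigr)_{s,t=1}^w.
\]
If the map $s \mapsto f(j_s)$ is not a bijection onto $\{1,\ldots,w\}$, then by pigeonhole some value repeats, two rows of the trace matrix coincide, and the determinant vanishes; the same applies to the $k_t$ indices. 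Therefore only $w$-restricted index sets $I$ and $K$ contribute, and in that case the remaining trace-matrix determinant equals $\pm d_w(\bb,\bb)$, the sign arising from row and column permutations. Since we are generating an ideal, the sign is irrelevant.

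The main (mild) subtlety is not one of proof but of bookkeeping: the individual products $c_I c_K$ live in $F$ rather than $R$, but the combined expressions $d_w(\bb,\bb)\, c_I c_K$ automatically lie in $R$ because each $d_w(\cU, \cU')$ does by Hypothesis \ref{zzhyp1.1}(2). Combining both reductions yields
\[
\MD(B/R) = \bigl\langle d_w(\bb,\bb)\, c_I c_K \,\big|\, I, K \text{ $w$-restricted}\bigr\rangle = d_w(\bb,\bb)\left\langle c_I c_K \mid I,K\right\rangle,
\]
which is the stated equality of ideals in $R$.
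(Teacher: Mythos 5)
Your proof is correct, and it is essentially the argument of \cite[Lemma 1.11(1)]{CPWZ2}, which this paper cites without reproving: first expand each $d_w(\cU,\cU')$ by $R$-multilinearity of the trace and the determinant over the generating set $X$, then use $x_j = c_j b_{f(j)}$ to factor out the coefficients, with non-$w$-restricted index sets killed by repeated (proportional) rows or columns. Your remark that the individual $c_Ic_K$ live in $F$ while the products $d_w(\bb,\bb)\,c_Ic_K$ land in $R$ is exactly the right bookkeeping point.
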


Although many of the results above hold for more general $(B,R)$, 
for much of the remainder of the paper, we will consider the 
specific case when $R$ is the center of $B$. Henceforth, we will 
often use the notation $(A,Z)$ where $A$ is an algebra with 
center $Z$.

\begin{hypothesis}
\label{zzhyp1.10}
Let $A$ be an algebra with center $Z$. Suppose that $(A,Z)$ 
satisfy Hypothesis \ref{zzhyp1.1} and further suppose that:
\begin{enumerate}
\item[(1)]
$Z$ is affine and CM, and
\item[(2)]
there is a quasi-basis with respect to a finite generating set of 
the $Z$-module $A$.
\end{enumerate}
\end{hypothesis}

A \emph{derivation} of an algebra $A$ is a $\kk$-linear map 
$\delta:A \to A$ satisfying the Leibniz rule: 
$$\delta(ab)=\delta(a)b+a\delta(b)$$ for all $a,b \in A$. 
A derivation $\delta$ is called \emph{locally nilpotent} if 
for every $a \in A$ there exists $n \in \NN$ such that 
$\delta^n(a)=0$. 

\begin{theorem}
\label{zzthm1.11}
Suppose $(A,Z)$ satisfies Hypothesis \ref{zzhyp1.10}(1). 
\begin{enumerate}
\item[(1)] 
Suppose $d$ is an $\cR$-discriminant of $A$ over $Z$.
If $g \in \Aut(A)$, then 
$g(d)=_{Z^{\times}}d$.
\item[(2)] 
Suppose $d$ is an $\overline{\cR}$-discriminant of $A$ over $Z$.
If $g \in \Aut(A)$, then 
$g(d)=_{A^{\times}}d$.
\item[(3)]
Let $d$ be either $\csr(A/Z)$ or $\bar{\csr}(A/Z)$. Suppose 
that ${\text{char}}\; \kk=0$ and that $A^{\times}=\kk^{\times}$. 
If $\delta$ is a locally nilpotent derivation of $A$, then 
$\delta(d)=0$.
\end{enumerate}
\end{theorem}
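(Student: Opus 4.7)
The plan is to derive parts~(1) and~(2) from the $\Aut(A)$-invariance of the (extended) modified discriminant ideal together with Lemma~\ref{zzlem1.6}, and then bootstrap part~(3) from (1)/(2) via the exponential of a locally nilpotent derivation.

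For part~(1), any $g\in\Aut(A)$ restricts to an automorphism of the center $Z$ and is compatible with the regular trace in the sense that $\tr(g(b))=g(\tr(b))$ for $b\in A$: if $\{b_{1},\dots,b_{w}\}$ is a semi-basis of $A_{F}$ over $F$, then so is $\{g(b_{1}),\dots,g(b_{w})\}$, and left multiplication by $g(b)$ in the new basis has matrix obtained from that of $b$ by applying $g$ entrywise. Hence $g(d_{w}(\cU,\cU'))=d_{w}(g(\cU),g(\cU'))$, so $g$ permutes the generators of $\MD(A/Z)$, making this ideal $\Aut(A)$-invariant (cf.\ \cite[Lemma~1.4]{CPWZ2}). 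Lemma~\ref{zzlem1.6}(1) then upgrades invariance to $\cR(A/Z)=\MD(A/Z)^{\vee\vee}$, and since this ideal equals $dZ$, we obtain $g(d)=_{Z^{\times}}d$. Part~(2) runs identically: $\overline{\MD}(A/Z)=\MD(A/Z)\cdot A$ is $\Aut(A)$-invariant because both factors are, so Lemma~\ref{zzlem1.6}(2) yields invariance of $\overline{\cR}(A/Z)=dA=Ad$, hence $g(d)=_{A^{\times}}d$.

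For part~(3), I would use the standard exponential trick. In characteristic zero, local nilpotence of $\delta$ makes
\[ \exp(t\delta):=\sum_{i\ge 0}\frac{t^{i}}{i!}\,\delta^{i} \]
a well-defined $\kk$-algebra automorphism of $A$ for every $t\in\kk$. The hypothesis $A^{\times}=\kk^{\times}$ forces $Z^{\times}=\kk^{\times}$, so parts~(1) and~(2) give $\exp(t\delta)(d)=c(t)\,d$ for some scalar $c(t)\in\kk^{\times}$ depending on $t$. Choose $N$ with $\delta^{N+1}(d)=0$; then
\[ \sum_{i=0}^{N}\frac{t^{i}}{i!}\,\delta^{i}(d)\in\kk\cdot d \]
for every $t\in\kk$. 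Since $\kk$ is infinite, a polynomial in $t$ with coefficients in $A$ whose value lies in the one-dimensional subspace $\kk\cdot d$ for every $t$ must have all of its coefficients in $\kk\cdot d$ (apply Vandermonde, or reduce modulo $\kk\cdot d$ and observe the resulting polynomial has infinitely many roots). In particular, $\delta(d)=c_{1}d$ for some $c_{1}\in\kk$. Induction then yields $\delta^{n}(d)=c_{1}^{n}d$ for all $n$, and local nilpotence applied to $d$ (which is nonzero since $dZ\ne 0$) forces $c_{1}=0$, i.e.\ $\delta(d)=0$.

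The only delicate point is the $\Aut(A)$-invariance of $\MD(A/Z)$, which hinges on the naturality of the regular trace under automorphisms; this is routine and essentially \cite[Lemma~1.4]{CPWZ2}. Everything else amounts to standard manipulation of reflexive hulls and the formal exponential of a locally nilpotent derivation, and I do not expect any substantial obstacle.
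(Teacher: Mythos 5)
Your proposal is correct and follows essentially the same route as the paper: parts (1) and (2) via the $\Aut(A)$-invariance of $\MD(A/Z)$ (the paper simply cites \cite[Lemma 1.4]{CPWZ2} for what you spell out about the trace) combined with Lemma \ref{zzlem1.6}, and part (3) via the exponential of a locally nilpotent derivation (the paper cites \cite[Proposition 1.5]{CPWZ2}, whose content is exactly your argument). The only difference is that you expand the cited external results into full detail, and your expansions are sound.
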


\begin{proof}
(1) Let $g \in \Aut(A)$. Then $g$ naturally preserves the center 
$Z$ of $A$. By \cite[Lemma 1.4]{CPWZ2}, $g$ preserves the modified 
discriminant ideal $\MD(A/Z)$. Since $\MD(A/Z)^{\vee\vee}=d Z$, 
then the result follows from Lemma \ref{zzlem1.6}.

(2) The proof is similar to the proof of part (1).

(3) This result is completely analogous to 
\cite[Proposition 1.5]{CPWZ2}. 
\end{proof}

We will use the following notation for tensor products of finite 
sets. If $\mathbf{b}\subseteq A$ and $\mathbf{b'}\subseteq A'$ are 
finite sets, then we define $\mathbf{b}\otimes \mathbf{b'} = 
\{ a\otimes b \mid a\in\mathbf{b}, b\in\mathbf{b}'\}\subseteq 
A\otimes A'$.

An algebra is called {\it PI} if it satisfies a polynomial identity.

\begin{lemma} 
\label{zzlem1.12}
Let $A$ and $A'$ be prime PI algebras with centers $Z$ and $Z'$, 
respectively, such that Hypothesis \ref{zzhyp1.1} holds for both 
$(A,Z)$ and $(A',Z')$. Let $w=\rk(A/Z)$ and $w'=\rk(A'/Z')$. 
Assume that $A\otimes A'$ is prime {\rm{(}}so that the center is 
a domain{\rm{)}}. Suppose $\bb =\{b_i\}_{i=1}^{w}$ and $\bb' = \{b'_i\}_{i=1}^{w'}$ are quasi-bases for 
$A$ and $A'$ with respect to the {\rm{(}}finite{\rm{)}} generating 
sets $\mathbf{x}=\{x_j\}_{j \in J} $ and 
$\mathbf{x}'=\{x'_j\}_{j \in J'}$, respectively. Then the 
following hold. 
\begin{enumerate}
\item[(1)] 
The set $\mathbf{b}\otimes \mathbf{b}'$ is a quasi-basis of $A\otimes A'$ 
with respect to $\mathbf{x}\otimes \mathbf{x}'$. 
\item[(2)] 
$\MD(A/Z)^{w'} \otimes \MD(A'/Z')^{w}= \MD\left((A \otimes A')
/(Z \otimes Z') \right)$. As a consequence, 
\[\overline{\MD}(A/Z)^{w'} \otimes \overline{\MD}(A'/Z')^{w}= 
\overline{\MD}\left( (A \otimes A')/(Z \otimes Z') \right).\]
\item[(3)] 
If $\MD(A/Z) \subseteq dZ$ and $\MD(A'/Z') \subseteq  d' Z'$ 
for some elements $d\in Z$ and $d'\in Z'$, respectively, then
\[ \MD((A \otimes A')/(Z \otimes Z')) 
\subseteq (d^{w'} \otimes (d')^w) (Z \otimes Z').\]
\item[(4)] 
If $\overline{\MD}(A/Z) \subseteq dZ$ and $\overline{\MD}(A'/Z') 
\subseteq  d' Z'$ for some normal elements $d\in A$ and
$d'\in A'$, respectively, then
\[ \overline{\MD}((A \otimes A')/(Z \otimes Z')) 
\subseteq (d^{w'} \otimes (d')^w) (A\otimes A').\]
\end{enumerate}
\end{lemma}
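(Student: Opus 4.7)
The plan is to check (1) directly from Definition~\ref{zzdef1.8}, to prove (2) as the main computation by combining the generators formula of Lemma~\ref{zzlem1.9} with a determinant identity and a combinatorial bijection, and then to deduce (3) and (4) as immediate consequences.

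For part (1), I would take $F=\mathrm{Frac}(Z)\otimes\mathrm{Frac}(Z')$, which is a localization of $Z\otimes Z'$ and hence permitted in Definition~\ref{zzdef1.8}. Conditions (a) and (c) follow immediately from $\bb\subseteq\mathbf{x}$, $\bb'\subseteq\mathbf{x}'$, and the fact that a tensor product of generating sets generates the tensor product algebra. Condition (b) is the standard fact that a tensor product of bases of free modules is a basis of the tensor product; applied here, $\bb\otimes\bb'$ is an $F$-basis of $(A\otimes A')_F\cong A_{\mathrm{Frac}(Z)}\otimes A'_{\mathrm{Frac}(Z')}$. For (d), the identity $x_j\otimes x'_k=(c_j\otimes c'_k)(b_{f(j)}\otimes b'_{f'(k)})$ exhibits the required one-dimensional $F$-subspace and yields the combined map $f\times f':J\times J'\to\{1,\ldots,w\}\times\{1,\ldots,w'\}$ with scalars $c_j\otimes c'_k$.

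The main computation is part (2), for which I would apply Lemma~\ref{zzlem1.9} to the quasi-basis $\bb\otimes\bb'$ from part (1) and then use two auxiliary identities. First, the determinant identity
$$d_{ww'}(\bb\otimes\bb',\bb\otimes\bb')=d_w(\bb,\bb)^{w'}\cdot d_{w'}(\bb',\bb')^{w},$$
which follows from the fact that left-multiplication by $a\otimes a'$ on $\bb\otimes\bb'$ acts as the Kronecker product of left-multiplications, so $\tr_{A\otimes A'}=\tr_A\otimes\tr_{A'}$; hence the Gram matrix of $\bb\otimes\bb'$ is $G\otimes G'$, and the classical identity $\det(G\otimes G')=\det(G)^{w'}\det(G')^{w}$ gives the claim. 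Second, a combinatorial bijection between $ww'$-restricted subsets $L\subseteq J\times J'$ and tuples $\big((I_k)_{k=1}^{w'},(I'_i)_{i=1}^{w}\big)$ of $w$-restricted $I_k\subseteq J$ and $w'$-restricted $I'_i\subseteq J'$, obtained by writing $L=\{(j_{(i,k)},j'_{(i,k)})\}_{(i,k)}$ and setting $I_k=\{j_{(i,k)}\}_{i}$ and $I'_i=\{j'_{(i,k)}\}_{k}$. Under this bijection $c_L$ factors as $(c_{I_1}\cdots c_{I_{w'}})\otimes(c'_{I'_1}\cdots c'_{I'_w})$, so products $c_Lc_M$ rearrange into $(c_{I_1}c_{K_1}\cdots c_{I_{w'}}c_{K_{w'}})\otimes(c'_{I'_1}c'_{K'_1}\cdots c'_{I'_w}c'_{K'_w})$, which are exactly the generators of $C^{w'}\otimes(C')^{w}$ with $C=\langle c_Ic_K\rangle$ and $C'=\langle c'_{I'}c'_{K'}\rangle$ the coefficient ideals from Lemma~\ref{zzlem1.9}. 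The extended-ideal consequence then follows by tensoring with $A\otimes A'$ and using the centrality of $\MD(A/Z)$ to write $(\MD(A/Z)A)^{w'}=\MD(A/Z)^{w'}A$.

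Parts (3) and (4) drop out of (2). For (3): $\MD(A/Z)\subseteq dZ$ gives $\MD(A/Z)^{w'}\subseteq d^{w'}Z$, similarly for $A'/Z'$, so tensoring gives the stated containment in $(d^{w'}\otimes(d')^{w})(Z\otimes Z')$. Part (4) is the analogous argument in $A\otimes A'$, where the normality of $d$ and $d'$ ensures $(dA)^{w'}=d^{w'}A$ and $(d'A')^{w}=(d')^{w}A'$. The main technical obstacle I anticipate is verifying the combinatorial bijection cleanly: one must check that row- and column-slicing are mutually inverse (in particular that reconstructing $L$ from the tuple produces a subset of full size $ww'$ whose image under $f\times f'$ is bijective), and that $c_L$ factors tensorially as described so that the ideal generated by the $c_Lc_M$ is \emph{exactly} $C^{w'}\otimes(C')^{w}$, not merely contained in or containing it.
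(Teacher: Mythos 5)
Your proposal is correct and follows essentially the same route as the paper's proof: part (1) directly from Definition \ref{zzdef1.8}, part (2) via Lemma \ref{zzlem1.9} combined with the Kronecker determinant identity and the row/column slicing of $ww'$-restricted subsets of $J\times J'$ into tuples of $w$- and $w'$-restricted subsets (your "bijection" is exactly the paper's passage between $M$ and the slices $I_{n'}, I'_n$ and back via $\widetilde{M}$), and parts (3)--(4) as immediate consequences. The technical point you flag at the end is precisely the one the paper verifies explicitly, and your outline of it is sound.
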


\begin{proof}
It is easy to see that $Z(A\otimes A')=Z\otimes Z'$. Part (1) 
follows directly from the definition of quasi-basis, and parts 
(3,4) are consequences of part (2). It remains to show part (2).

To prove (2), we use the generators of the ideal $\MD(B/R)$ 
given by Lemma~\ref{zzlem1.9} to identify 
$\MD\left( (A \otimes A')/(Z \otimes Z') \right)$ with 
the tensor product $\MD(A/Z)^{w'} \otimes \MD(A'/Z')^{w}$. 

As above, we define a map $f: J \to \{1,\dots, w\}$ by letting $f(\alpha)$ be the unique index such that $x_{\alpha} = c_{\alpha} b_{f(\alpha)}$. We define $f': J' \to \{1, \dots, w'\}$ similarly so that $x'_{\alpha} = c'_{\alpha} b'_{f'(\alpha)}$.
Then $f\times f': J\times J'\to 
\{1,\ldots,w\}\times \{1,\ldots,w'\}$ is well-defined. 
Adopting the notation that if $I \subseteq J$ then $c_I = \prod_{i \in I} c_I$ (and similarly for subsets of $J'$ and $J \times J'$),
we can write out the generators of the modified discriminant ideals explicitly:
\begin{align*}
\MD \left(A/Z\right) &= d_w(\bb,\bb)\langle c_I c_K \mid 
{\text{$w$-restricted subsets }} I,K\subseteq J \rangle\\
\MD \left(A'/Z'\right) &= d_{w'}(\bb',\bb')\langle c'_{I'} c'_{K'} 
\mid {\text{$w'$-restricted subsets }} I',K'\subseteq J'\rangle\\
\MD\left( (A \otimes A')/(Z \otimes Z') \right) 
&= d_{ww'}(\bb\otimes\bb',\bb\otimes\bb')\\
&\qquad \langle\gamma_M \gamma_N \mid 
{\text{$ww'$-restricted subsets }} M,N\subseteq J\times J'\rangle
\end{align*}
where $\gamma_M=\prod_{(\alpha,\beta)\in M} c_\alpha \otimes 
c_\beta'$ for $M \subseteq J \times J'$. The following equality
\begin{align*}
    d_{ww'}(\bb\otimes\bb',\bb\otimes\bb')&= 
    d_{w}(\bb, \bb)^{w'}\otimes d_{w'}(\bb',\bb')^w
\end{align*}
is elementary, as it follows from properties of Kronecker 
products of matrices, so we omit its justification.

Let $\text{pr}_1$ and $\text{pr}_2$ denote the projections of 
$J\times J'$ to $J$ and $J'$, respectively. Suppose that $M,N$ are $ww'$-restricted 
subsets $J\times J'$.
For each $n \in \{1, 2, \dots, w\}$ and $n' \in \{1, 2, \dots, w'\}$ we define
\begin{align*}
I'_{n}&=(f \circ \text{pr}_1)^{-1}(n)\cap M & 
    K'_{n} &= (f\circ \text{pr}_1)^{-1}(n)\cap N \\
I_{n'}&=(f' \circ \text{pr}_2)^{-1}(n')\cap M &
    K_{n'}&=(f' \circ \text{pr}_2)^{-1}(n')\cap N.
\end{align*}


Note that since $M$ and $N$ are $ww'$-restricted subsets of $J \times J'$, we have, for each $n, n'$, that $|I_{n'}|=|K_{n'}| = w$ and $|I'_n|=|K'_n|=w'$ (and these are $w$ and $w'$-restricted subsets of $J$ and $J'$, respectively).
Then
\begin{align}
\label{E1.12.1}\tag{E1.12.1}
\gamma_M\gamma_N = \prod_{n'=1}^{w'} c_{I_{n'}}c_{K_{n'}}
\otimes \prod_{n=1}^{w}c'_{I'_n}c'_{K'_n}.
\end{align}
Hence, $d_{ww'}(\bb\otimes \bb',\bb\otimes \bb')
\gamma_M\gamma_N\in\MD(A/Z)^{w'} \otimes \MD(A'/Z')^{w}$. 
Conversely, let $C$ denote the right hand side of \eqref{E1.12.1} where $I_{n'}$, $K_{n'}$ are $w$-restricted subsets of $J$ and $I_n'$, $K_n'$ are $w'$-restricted subsets of $J'$. 
Then we can express $C$ in the form $C=\gamma_{\widetilde{M}}\gamma_{\widetilde{N}}$ where 
\begin{align*}
    \widetilde{M} &= \bigcup_{n'=1}^{w'}\bigcup_{n=1}^w (I_{n'}\times J')\cap(J\times I_n') \\
    \widetilde{N} &= \bigcup_{n'=1}^{w'}\bigcup_{n=1}^w (K_{n'}\times J')\cap(J\times K_n').
\end{align*}
Indeed, for each $1 \leq n \leq w$ and $1 \leq n' \leq w'$, we can write $(I_{n'}\times J')\cap(J\times I_n')$ (somewhat perversely) as the singleton  $\left((\restr{f}{I_{n'}})^{-1}(n), (\restr{f'}{I_{n}'})^{-1}(n')\right)$ since $I_{n'}$ being a $w$-restricted subset of $J$  means $\restr{f}{I_{n'}}$ (and similarly $\restr{f}{I_n'}$) is invertible. This shows that $\widetilde{M}$ and $\widetilde{N}$ are $ww'$-restricted subsets of $J\times J'$. It follows from the definition that $(f\circ \mathrm{pr}_1)^{-1}(n)\cap \widetilde{M} = I_n'$ and $(f'\circ \mathrm{pr}_2)^{-1}(n')\cap \widetilde{M} = I_{n'}$ (similar statements hold for $\widetilde{N}$, $K_n'$ and $K_{n'}$). 

This verifies the formula $C=\gamma_{\widetilde{M}}\gamma_{\widetilde{N}}$ which shows 
that 
\[ d_w(\bb,\bb)^{w'}d_{w'}(\bb,\bb')^w C\in \MD
\left( (A \otimes A')/(Z \otimes Z') \right).\]
This completes the proof of the first equation of (2).
The second equation is an immediate consequence of the first. 
\end{proof}

\begin{lemma}
\label{zzlem1.13}
Let $A$ be a noetherian prime PI algebra. Suppose $M_1,M_2$ are 
commuting ideals of $A$ and $d_1, d_2$ are commuting normal 
elements of $A$ such that, for $i = 1,2$,
\begin{enumerate}
\item[(a)] 
$M_i \subseteq d_i A$ and
\item[(b)] 
$\GKdim\left(d_i A/ M_i \right)  \leq \GKdim (A)-2$.
\end{enumerate}
Then 
\begin{enumerate}
\item[(1)] 
$M_1M_2 \subseteq d_1d_2 A$ and
\item[(2)] 
$\GKdim (d_1d_2A/M_1M_2) \leq \GKdim(A)-2$.
\end{enumerate}
\end{lemma}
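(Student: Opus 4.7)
Part (1) is immediate: normality of $d_2$ gives $Ad_2 = d_2A$, so
$$M_1 M_2 \subseteq d_1 A \cdot d_2 A = d_1(Ad_2)A = d_1 d_2 A.$$

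For part (2), my plan is to interpolate with the ideal $M_1 d_2$, which sits between $M_1 M_2$ and $d_1 d_2 A$: the inclusion $M_1 M_2 \subseteq M_1 d_2$ follows by writing each $m_2 \in M_2 \subseteq Ad_2$ as $a'd_2$, and $M_1 d_2 \subseteq d_1 A d_2 = d_1 d_2 A$ is immediate from $M_1 \subseteq d_1 A$. The short exact sequence
$$0 \to M_1 d_2/M_1 M_2 \to d_1 d_2 A/M_1 M_2 \to d_1 d_2 A/M_1 d_2 \to 0,$$
together with subadditivity of $\GKdim$ in the noetherian PI setting, reduces the claim to bounding the two outer terms by $\GKdim A - 2$. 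For each I intend to use the same elementary tool: if $d$ is a regular normal element (and nonzero normal elements in the prime ring $A$ are automatically regular), then left multiplication by $d$ is a right-$A$-module isomorphism $A \xrightarrow{\sim} dA$, and this iso identifies $dM$ with $\{a : da \in dM\}$ for any ideal $M \subseteq dA$; a symmetric statement holds for right multiplication.

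Applied to the right-hand term, set $N_1 := \{a \in A : d_1 a \in M_1\}$, so $d_1 N_1 = M_1$. Left multiplication by $d_1$ restricts to an iso $d_2 A \cong d_1 d_2 A$ sending $N_1 d_2$ to $M_1 d_2$, whence $d_1 d_2 A/M_1 d_2 \cong d_2 A/N_1 d_2$ as right $A$-modules. The same tool gives $A/N_1 \cong d_1 A/M_1$, so hypothesis (b) yields $\GKdim(A/N_1) \leq \GKdim A - 2$. Using the normalizing automorphism $\tau$ of $d_2$ to rewrite $N_1 d_2 = d_2 \tau^{-1}(N_1)$ and applying the iso $A \cong d_2 A$ (left multiplication by $d_2$), we get $d_2 A/N_1 d_2 \cong A/\tau^{-1}(N_1)$, which has the same GK dimension as $A/N_1$.

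Applied to the left-hand term, right multiplication by $d_2$ is a left-$A$-module iso $M_1 \cong M_1 d_2$. With $N_2' := \{a \in A : ad_2 \in M_2\}$, the preimage of $M_1 M_2$ is $M_1 N_2'$, so $M_1 d_2/M_1 M_2 \cong M_1/M_1 N_2'$. Since $A$ is noetherian, $M_1$ is finitely generated as a right $A$-module, say by $s$ elements, and since $N_2'$ right-annihilates $M_1/M_1 N_2'$, there is a surjection $(A/N_2')^s \twoheadrightarrow M_1/M_1 N_2'$; hence $\GKdim(M_1/M_1 N_2') \leq \GKdim(A/N_2')$. Finally, right multiplication by $d_2$ gives a left-$A$-module iso $A \cong Ad_2 = d_2 A$ sending $N_2'$ to $M_2$, so $\GKdim(A/N_2') = \GKdim(d_2A/M_2) \leq \GKdim A - 2$. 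The main obstacle is purely bookkeeping: keeping left/right actions separate and tracking the normalizing twists of $d_1$ and $d_2$. I would isolate the ``twisted division'' isomorphism $dA/M \cong A/\{a : da \in M\}$ (and its right-side analogue) as a small preliminary lemma so that the main computation only records which preimage is which.
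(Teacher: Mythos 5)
Your argument is correct and is essentially the paper's proof with the reduction unwound: the paper disposes of all the normalizing twists in one line by replacing $M_i$ with $d_i^{-1}M_i$ (so $d_i=1$), after which your exact sequence becomes its sequence $0\to M_1/M_1M_2\to A/M_1M_2\to A/M_1\to 0$ and the two outer bounds are obtained exactly as you do (hypothesis (b) for the quotient term, and the sub term being a finitely generated module over $A/M_2$, which has $\GKdim\le\GKdim A-2$). The only point worth adding explicitly is that one may assume $d_1d_2\neq 0$ (the contrary case being trivial), which you use implicitly when invoking regularity of the $d_i$.
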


\begin{proof}
Clearly we can assume that $d_1 d_2\neq 0$. By replacing $M_i$ 
by $d_i^{-1}M_i$ we can assume that $d_i=1$ (for $i=1,2$). We 
get a short exact sequence
\[ 0\to M_1/M_1M_2 \to A/M_1M_2 \to A/M_1 \to 0.\]
By hypothesis (b), $\GKdim(A/M_1)\le \GKdim(A) -2$ and since 
$M_1/M_1M_2$ is a finitely generated $A/M_1$-module, we also have 
$$\GKdim(M_1/M_1M_2) \le \GKdim (A/M_2)\leq \GKdim(A) -2.$$ 
Hence by additivity of $\GKdim$ on short exact sequences, 
we conclude that $\GKdim(A/M_1M_2) \le \GKdim(A) -2$. 
\end{proof}

\begin{theorem}
\label{zzthm1.14}
Retain the hypothesis of Lemma \ref{zzlem1.12}. 
\begin{enumerate}
\item[(1)]
Suppose that $A/Z$ {\rm{(}}resp. $A'/Z'${\rm{)}} satisfies 
RDC with respect to $d$ {\rm{(}}resp. $d'${\rm{)}}, a weak
$\cR$-discriminant of $A/Z$ {\rm{(}}resp. $A'/Z'${\rm{)}}. 
Then $(A\otimes A')/(Z\otimes Z')$ satisfies RDC with respect to
$d^{w'}\otimes (d')^w$, which is a weak $\cR$-discriminant of 
$(A\otimes A')/(Z\otimes Z')$.
If further Hypothesis \ref{zzhyp1.10}(1) holds for 
$(A\otimes A', Z\otimes Z')$, 
then $d^{w'}\otimes (d')^w=_{(Z\otimes Z')^\times}\csr((A\otimes A')/(Z\otimes Z'))$.
\item[(2)]
Suppose that $A/Z$ {\rm{(}}resp. $A'/Z'${\rm{)}} satisfies 
ERDC with respect to $d$ {\rm{(}}resp. $d'${\rm{)}}, a weak
$\overline{\cR}$-discriminant of $A/Z$ {\rm{(}}resp. $A'/Z'${\rm{)}}. 
Then $(A\otimes A')/(Z\otimes Z')$ satisfies ERDC with respect to $d^{w'}\otimes (d')^w$, which is a weak $\overline{\cR}$-discriminant of 
$(A\otimes A')/(Z\otimes Z')$.
If further Hypothesis \ref{zzhyp1.10}(1) holds for 
$(A\otimes A', Z\otimes Z')$, 
then $d^{w'}\otimes (d')^w=_{(A\otimes A')^\times}\bar{\csr}((A\otimes A')/(Z\otimes Z'))$.
\end{enumerate}
\end{theorem}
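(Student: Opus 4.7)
The strategy is to combine the tensor formula of Lemma~\ref{zzlem1.12}(2) with the multiplicativity of PCC established in Lemma~\ref{zzlem1.13}, and then invoke Lemma~\ref{zzlem1.5} to pass from a ``weak'' discriminant to a genuine $\csr$ (resp.\ $\bar\csr$) under the CM hypothesis.

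For part (1), set $I=\MD(A/Z)$ and $I'=\MD(A'/Z')$. Lemma~\ref{zzlem1.12}(2) identifies $\MD((A\otimes A')/(Z\otimes Z'))$ with $I^{w'}\otimes (I')^w$ as an ideal of the commutative ring $Z\otimes Z'$. Inside that ring one can factor
\[ I^{w'}\otimes (I')^w \;=\; (I\otimes Z')^{w'}\,(Z\otimes I')^{w}. \]
The plan is to verify PCC for each ``single-slot'' ideal and then iterate Lemma~\ref{zzlem1.13}. By assumption $I\subseteq dZ$ with $\GKdim(dZ/I)\le \GKdim Z-2$, so $I\otimes Z'\subseteq (d\otimes 1)(Z\otimes Z')$ with quotient $(dZ/I)\otimes Z'$ of Gelfand--Kirillov dimension at most $\GKdim(dZ/I)+\GKdim Z'\le \GKdim(Z\otimes Z')-2$, using additivity of $\GKdim$ for the tensor product of a finitely generated module with an affine algebra. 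The symmetric statement holds for $Z\otimes I'$ with divisor $1\otimes d'$. Since $d\otimes 1$ and $1\otimes d'$ are commuting normal elements of $Z\otimes Z'$, and the corresponding ideals pairwise commute, applying Lemma~\ref{zzlem1.13} a total of $w'+w-1$ times yields PCC for $I^{w'}\otimes (I')^w$ with respect to $D:=d^{w'}\otimes (d')^w$. This exhibits $D$ as a weak $\cR$-discriminant of $(A\otimes A')/(Z\otimes Z')$. If in addition $Z\otimes Z'$ is affine CM (Hypothesis~\ref{zzhyp1.10}(1)), then Lemma~\ref{zzlem1.5}(3) upgrades this to $\csr((A\otimes A')/(Z\otimes Z'))=_{(Z\otimes Z')^\times} D$.

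For part (2), the argument runs identically with $A$, $A'$, $A\otimes A'$ replacing $Z$, $Z'$, $Z\otimes Z'$, and $\overline{\MD}$ replacing $\MD$ throughout. The points to check are that $d\otimes 1$ and $1\otimes d'$ are commuting normal elements of $A\otimes A'$ (immediate from normality of $d$ in $A$ and $d'$ in $A'$), and that the two-sided ideals $\overline{\MD}(A/Z)\otimes A'$ and $A\otimes \overline{\MD}(A'/Z')$ commute (they do so elementwise, as operations in disjoint tensor slots), so Lemma~\ref{zzlem1.13} applies in the noncommutative setting. Lemma~\ref{zzlem1.5}(4) then delivers $\bar\csr((A\otimes A')/(Z\otimes Z'))=_{(A\otimes A')^\times} D$ under the CM hypothesis.

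The main obstacle is the GK-dimension bookkeeping when moving from $Z$ (or $A$) into its tensor product with $Z'$ (or $A'$) and then taking products of ideals; I expect this to be handled cleanly by additivity of $\GKdim$ together with the iterated application of Lemma~\ref{zzlem1.13}. Once that bookkeeping is in place, the rest is a direct combination of Lemmas~\ref{zzlem1.12}, \ref{zzlem1.13}, and \ref{zzlem1.5}.
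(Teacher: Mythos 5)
Your proposal is correct and follows essentially the same route as the paper: identify $\MD((A\otimes A')/(Z\otimes Z'))$ with $\MD(A/Z)^{w'}\otimes \MD(A'/Z')^{w}$ via Lemma~\ref{zzlem1.12}(2), establish PCC for this product with respect to $d^{w'}\otimes (d')^{w}$ via Lemma~\ref{zzlem1.13}, and upgrade to $\csr$ (resp.\ $\bar\csr$) via Lemma~\ref{zzlem1.5} once $Z\otimes Z'$ is known to be affine CM. The only difference is one of granularity: the paper applies Lemma~\ref{zzlem1.13} once to the two pre-powered ideals $\MD(A/Z)^{w'}\otimes Z'$ and $Z\otimes \MD(A'/Z')^{w}$, whereas you iterate it over the single-slot factors and spell out the $\GKdim$ additivity under $-\otimes Z'$, which the paper leaves implicit.
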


\begin{proof}
(1) Let $R=Z\otimes Z'$, $M_1=\MD(A/Z)^{w'}\otimes Z'$ and 
$M_2=Z\otimes \MD(A'/Z')^{w}$. Let $d_1=d^{w'}$ and $d_2=(d')^w$.
The first assertion follows from Lemmas \ref{zzlem1.12} and 
\ref{zzlem1.13} applied to $R$.

Since $Z$ and $Z'$ are affine noetherian, then so is $R$. Moreover, 
$Z$ and $Z'$ are CM so $R$ is as well by \cite[Theorem 2.1]{BK}.
The second assertion now follows from Lemma \ref{zzlem1.5}(1) and 
the first assertion. 

(2) The proof of this assertion is analogous to the proof of part (1).
\end{proof}

In \cite{GKM}, the authors studied discriminants of twisted tensor 
products of algebras.

\begin{question}
\label{zzque1.15}
Under similar hypotheses to those in \cite{GKM}, is it possible 
to compute the $\cR$-($\overline{\cR}$-) discriminant of a twisted 
tensor product of two algebras?
\end{question}

To conclude this section, we make an observation. It is easy to 
see that if $d$ is a weak $\cR$-discriminant of $B$ over $R$ 
[Definition \ref{zzdef1.4}(2)], then it is a weak 
$\overline{\cR}$-discriminant of $B$ over $R$ 
[Definition \ref{zzdef1.4}(2)]. Note that if $R$ is an affine 
CM normal domain, then the $\cR$-discriminant (resp. 
$\overline{\cR}$-discriminant) exists if and only if the weak 
$\cR$-discriminant (resp. $\overline{\cR}$-discriminant) exists 
[Lemmas \ref{zzlem1.5} and \ref{zzlem2.4}]. Combining the above 
two sentences, under some mild hypothesis, if $\csr(B/R)$ exists, 
then so does $\bar{\csr}(B/R)$ and $\bar{\csr}(B/R)=\csr(B/R)$.

\begin{lemma}
\label{zzlem1.16}
Assume Hypothesis~\ref{zzhyp1.1} for $(A,Z)$. Suppose that $Z$ is 
an affine CM normal domain and that $A$ is reflexive over $Z$. If 
$\csr(A/Z)$ exists, then so does $\bar{\csr}(A/Z)$ and 
$\bar{\csr}(A/Z)=_{A^{\times}}\csr(A/Z)$.
\end{lemma}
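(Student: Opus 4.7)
The plan is to deduce the lemma directly from the remark preceding it: the proof should assemble the chain \emph{$\csr$ exists $\Rightarrow$ weak $\cR$-discriminant exists $\Rightarrow$ weak $\overline{\cR}$-discriminant exists $\Rightarrow$ $\bar{\csr}$ exists}, and show that the generator $d$ is preserved throughout (up to units in $A$).

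First, I would unwind the hypothesis that $d:=\csr(A/Z)$ exists, i.e., that $\MD(A/Z)^{\vee\vee}=dZ$, into the statement that $\MD(A/Z)$ satisfies PCC with respect to $d$ (so that $d$ is a weak $\cR$-discriminant in the sense of Definition \ref{zzdef1.4}(2)). The containment $\MD(A/Z)\subseteq dZ$ is free, as $\MD(A/Z)\subseteq \MD(A/Z)^{\vee\vee}=dZ$. For the GK-dimension drop, I would invoke the converse direction of Lemma \ref{zzlem1.5}(2), i.e., Lemma \ref{zzlem2.4}: since $Z$ is an affine CM \emph{normal} domain, an ideal $I\subseteq Z$ satisfies $I^{\vee\vee}=dZ$ if and only if $I\subseteq dZ$ and $dZ/I$ is supported in codimension at least two, because the reflexive hull of $I$ inside a normal domain equals the intersection of its localizations at height-one primes.

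Second, I would promote $d$ from a weak $\cR$- to a weak $\overline{\cR}$-discriminant, which is the elementary part of the observation in the paragraph before the lemma. Since $d\in Z$ is central, $\MD(A/Z)\subseteq dZ$ gives $\overline{\MD}(A/Z)=\MD(A/Z)\,A\subseteq dA$. The natural surjection
\[
(dZ/\MD(A/Z))\otimes_Z A\;\twoheadrightarrow\; dA/\overline{\MD}(A/Z),
\]
together with module-finiteness of $A$ over $Z$ (which forces $\GKdim$ of any $Z$-module tensored with $A$ to be no larger than the $\GKdim$ of the module itself), yields $\GKdim(dA/\overline{\MD}(A/Z))\leq \GKdim Z-2=\GKdim A-2$. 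Since $d$ is central it is in particular normal in $A$, so ERDC is verified with respect to $d$.

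Third, I would apply Lemma \ref{zzlem1.5}(4) to the reflexive $Z$-module $A$ over the affine CM domain $Z$ to conclude $(\overline{\MD}(A/Z))^{\vee\vee}=dA$; this exactly says $\bar{\csr}(A/Z)$ exists and equals $d=\csr(A/Z)$ up to a unit in $A$. The main obstacle is Step~1, the passage from $\csr$-existence to RDC, since it requires the nontrivial input that in a normal CM domain a reflexive ideal is determined by its height-one localizations and that this determination converts a reflexive-hull equality into a codimension-two vanishing statement; once Lemma \ref{zzlem2.4} is granted, the remaining two steps are routine.
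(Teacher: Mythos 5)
Your proof is correct and follows exactly the route the paper intends: the lemma is stated without a separate proof because the preceding observation paragraph already outlines the chain ($\csr$ exists $\Rightarrow$ RDC, via the fact that an ideal of a normal domain agrees with its reflexive hull at height-one primes $\Rightarrow$ ERDC by tensoring up to $A$ $\Rightarrow$ $\bar{\csr}$ exists by Lemma \ref{zzlem1.5}), and your proposal fills in precisely those steps. The only cosmetic point is that your codimension-two claim in Step~1 rests on the height-one localization property of reflexive hulls over normal domains rather than on Lemma \ref{zzlem2.4} as literally stated (which assumes $A$ is CM over $Z$), but the justification you actually give is the correct one and matches what the paper's remark relies on.
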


\section{A geometric interpretation}
\label{zzsec2}

In this section we provide a geometric motivation for the 
reflexive hull discriminant. For convenience, we assume the 
next hypothesis for a large part of this section.

\begin{hypothesis}
\label{zzhyp2.1}
Let $(A,Z)$ satisfy Hypothesis \ref{zzhyp1.1}. Let 
$X:={\text{Spec}}\; Z$ be an affine integral normal 
$\kk$-variety. Let $\cD$ (resp. $\overline{\cD}$) be the 
sheafification of the ideal $\MD(A/Z)$ in $Z$ (resp. 
$\overline{\MD}(A/Z)$ in $A$).
\end{hypothesis} 

We are interested in the reflexive hull of the modified 
discriminant ideal $\cD$ of the $\mathcal{O}_X$-order $A$ and 
consider it as a subsheaf of $\mathcal{O}_X$. Note that 
$\mathcal{D}^{\vee\vee}$ is, by definition, a reflexive sheaf of rank one, so we can construct it using the following well-known 
fact \cite[\href{https://stacks.math.columbia.edu/tag/0AY6}{Lemma 0AY6}]{stacks}.

\begin{lemma}
\label{zzlem2.2}
Let $X$ be an integral locally noetherian normal scheme. Let 
$\mathcal{L}$ be a coherent ${\mathcal O}_X$-module. The following 
are equivalent:
\begin{enumerate}
\item[(a)]
${\mathcal L}$ is reflexive,
\item[(b)]
there exists an open subscheme $\iota: U \to X$ such that
\begin{enumerate}
\item[(b1)]
every irreducible component of $X\setminus U$ has codimension 
$\geq 2$ in $X$,
\item[(b2)]
$\iota^{\ast} {\mathcal L}$ is finite locally free, and
\item[(b3)]
${\mathcal L}=\iota_{\ast} \iota^{\ast} {\mathcal L}$.
\end{enumerate}
\end{enumerate}
\end{lemma}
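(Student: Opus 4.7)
The plan is to prove the two directions (a)$\Rightarrow$(b) and (b)$\Rightarrow$(a) by exploiting two standard facts about coherent sheaves on an integral locally noetherian normal scheme: on such a scheme the local rings at codimension-one points are DVRs, and the structure sheaf (and hence any sheaf of local $\Hom$'s into it) satisfies Serre's condition $S_2$. The overall strategy is classical; the main substantive input is to identify the ``big open'' $U$ on which $\mathcal{L}$ is locally free in direction (a)$\Rightarrow$(b), and to compare $\mathcal{L}$ with $\mathcal{L}^{\vee\vee}$ via this $U$ in direction (b)$\Rightarrow$(a).

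For (a)$\Rightarrow$(b): Assume $\mathcal{L}\cong\mathcal{L}^{\vee\vee}$. First I would show $\mathcal{L}$ is torsion-free. Since duals inject into double duals only when the torsion vanishes, the natural map $\mathcal{L}\to\mathcal{L}^{\vee\vee}$ being an isomorphism forces the kernel (which is precisely the torsion subsheaf) to be zero. Next, for any $x\in X$ of codimension one, the local ring $\mathcal{O}_{X,x}$ is a DVR by normality, so the stalk $\mathcal{L}_x$ is a finitely generated torsion-free module over a DVR, hence free. Therefore the non-free locus $Z\subseteq X$ contains no codimension-one points, i.e., $Z$ has codimension $\geq 2$. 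Take $U := X\setminus Z$; this gives (b1) and (b2). For (b3), I would invoke the key depth fact that $\mathcal{L}=\mathcal{H}om_{\mathcal{O}_X}(\mathcal{L}^\vee,\mathcal{O}_X)$ satisfies Serre's condition $S_2$, because $\mathcal{O}_X$ does (as $X$ is normal), and $\mathcal{H}om(-,\mathcal{F})$ inherits depth from the target. Finally, any $S_2$-coherent sheaf $\mathcal{F}$ on a normal noetherian scheme satisfies $\mathcal{F}\cong\iota_\ast\iota^\ast\mathcal{F}$ whenever the complement of $\iota\colon U\hookrightarrow X$ has codimension $\geq 2$: the cokernel of $\mathcal{F}\to\iota_\ast\iota^\ast\mathcal{F}$ is supported in codimension $\geq 2$, while the local cohomology group $H^0_Z(\mathcal{F})$ vanishes by depth considerations, and similarly for $H^1_Z(\mathcal{F})$.

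For (b)$\Rightarrow$(a): Assume $U$ satisfies (b1)--(b3). The sheaf $\iota^\ast\mathcal{L}$ is finite locally free, hence reflexive on $U$. Dualizing commutes with restriction to an open subscheme, so $\iota^\ast(\mathcal{L}^{\vee\vee})=(\iota^\ast\mathcal{L})^{\vee\vee}=\iota^\ast\mathcal{L}$. Applying $\iota_\ast$ and using (b3), I would get
\[
\mathcal{L}=\iota_\ast\iota^\ast\mathcal{L}=\iota_\ast\iota^\ast(\mathcal{L}^{\vee\vee}).
\]
The remaining step is to identify this last sheaf with $\mathcal{L}^{\vee\vee}$, which amounts to verifying that the double dual itself satisfies the push-forward condition $\mathcal{L}^{\vee\vee}\cong\iota_\ast\iota^\ast\mathcal{L}^{\vee\vee}$. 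This again follows from the $S_2$ property of a sheaf $\mathcal{H}om(-,\mathcal{O}_X)$ into an $S_2$-sheaf on a normal noetherian scheme. Combining these identifications gives $\mathcal{L}\cong\mathcal{L}^{\vee\vee}$, completing reflexivity.

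The main obstacle, and the one that requires the normality hypothesis most essentially, is the $S_2$/depth argument used in both directions: namely, that coherent sheaves of the form $\mathcal{H}om_{\mathcal{O}_X}(\mathcal{G},\mathcal{O}_X)$ on a normal noetherian scheme have depth $\geq 2$ along any closed subset of codimension $\geq 2$, so their sections extend uniquely across such subsets. Once this is in hand, all other steps (torsion-freeness, local-freeness in codimension one, the gluing identity $\mathcal{F}\cong\iota_\ast\iota^\ast\mathcal{F}$) follow by standard depth/local cohomology arguments, and the equivalence is essentially the content of Stacks \href{https://stacks.math.columbia.edu/tag/0AY6}{0AY6}, which may be cited directly.
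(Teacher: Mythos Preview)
Your proof sketch is correct and follows the standard argument for this equivalence. Note, however, that the paper does not actually prove Lemma~\ref{zzlem2.2}: it simply states the result and cites \cite[\href{https://stacks.math.columbia.edu/tag/0AY6}{Lemma 0AY6}]{stacks} as a well-known fact, which you yourself identify at the end of your proposal. So there is nothing to compare against; your write-up supplies the details that the paper deliberately omits.
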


Let ${\mathcal L}_1$ and ${\mathcal L}_2$ be two coherent reflexive ${\mathcal O}_X$-modules. By Lemma \ref{zzlem2.2}, ${\mathcal L}_1\cong {\mathcal L}_2$ if there is an open subscheme $\iota:U\subseteq X$ such that $X \setminus U$ has codimension $\geq 2$, $\iota^\ast {\mathcal L}_1$ and $\iota^\ast {\mathcal L}_2$ are locally free, and $\iota^\ast {\mathcal L}_1 \cong \iota^\ast {\mathcal L}_2$.
The following lemma is a special case of Lemma \ref{zzlem2.2}, and is useful in 
computing reflexive hull discriminants.

\begin{lemma}
\label{zzlem2.3}
Suppose $(A,Z)$ satisfies Hypothesis \ref{zzhyp2.1}. Let $U$ be an 
open subset of $X$ such that $X \setminus U$ has codimension $\geq 2$.
\begin{enumerate}
\item[(1)] 
If there exists a normal element $d \in A$  such that the principal 
ideal $(d)$ of $A$ agrees with $\overline{\MD}(A/Z)$ on $U$, then 
$\bar{\csr}(A/Z) =_{A^\times} d$.
\item[(2)] Similarly, if there exists an element $d \in Z$  such 
that the principal ideal $(d)$ of $Z$ agrees with $\MD(A/Z)$ on 
$U$, then $\csr(A/Z) =_{Z^\times} d$.
\end{enumerate}
\end{lemma}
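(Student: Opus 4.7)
The plan is to deduce both statements as direct applications of Lemma \ref{zzlem2.2}, using the standard consequence stated right after it: two coherent reflexive $\mathcal{O}_X$-modules which coincide on an open subset $U \subseteq X$ whose complement has codimension $\geq 2$ are globally isomorphic.

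For part (1), the candidates to compare are the two ideals $dA$ and $(\overline{\MD}(A/Z))^{\vee\vee}$ of $A$, viewed as $Z$-modules and sheafified on $X = \Spec Z$. The second is reflexive by construction. For the first, the normality of $d$ makes $a \mapsto ad$ a $Z$-module isomorphism $A \xrightarrow{\sim} dA$ (recall that $Z$ is the center of $A$), and $A$ is itself reflexive over $Z$ under the hypotheses required for $\bar{\csr}$ to be defined (Definition \ref{zzdef1.3}(3)). Hence $dA$ sheafifies to a reflexive $\mathcal{O}_X$-module as well. I would then verify that these two reflexive sheaves agree on $U$: by hypothesis $(d) = \overline{\MD}(A/Z)$ as ideals after restriction to $U$, and since the reflexive hull operation commutes with restriction to open subsets (flatness of localization, together with $\Hom$-preservation for finitely presented modules), we obtain
\[
\bigl((\overline{\MD}(A/Z))^{\vee\vee}\bigr)\big|_U \;=\; \bigl((d)|_U\bigr)^{\vee\vee} \;=\; (d)|_U,
\]
where the last equality holds because $(d)|_U$ is locally free of rank one, hence already reflexive.

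Applying Lemma \ref{zzlem2.2} then produces a global isomorphism of sheaves, which, on global sections (equivalently, via the equivalence between coherent sheaves on $\Spec Z$ and finitely generated $Z$-modules), recovers the equality $(\overline{\MD}(A/Z))^{\vee\vee} = dA$ of ideals in $A$; by Definition \ref{zzdef1.3}(4) this is exactly $\bar{\csr}(A/Z) =_{A^\times} d$. Part (2) is the specialization $A=Z$ of the same argument, needing no separate proof beyond the remark that $dZ$ is automatically reflexive because $Z$ is a normal domain (so that $dZ \cong Z$ is itself reflexive).

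The main obstacle, if it can be called one, is purely bookkeeping: one must identify the module-theoretic reflexive hull $(-)^{\vee\vee}$ of Definition \ref{zzdef1.3} with the sheaf-theoretic notion of reflexivity used in Lemma \ref{zzlem2.2}, and confirm that this identification respects restriction to open subsets. Under Hypothesis \ref{zzhyp2.1} (affine noetherian normal $Z$) this correspondence is standard, so the argument reduces to a short invocation of Lemma \ref{zzlem2.2}.
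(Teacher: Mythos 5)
Your proof is correct and follows exactly the route the paper intends: the paper gives no explicit proof, simply declaring the lemma ``a special case of Lemma \ref{zzlem2.2},'' and your argument (both $dA$ and $(\overline{\MD}(A/Z))^{\vee\vee}$ are reflexive, reflexive hulls commute with restriction, the two sheaves agree on $U$, conclude via Lemma \ref{zzlem2.2}) is precisely the intended filling-in of that remark. The only cosmetic point is that one should use the characterization $\mathcal{L}=\iota_\ast\iota^\ast\mathcal{L}$ from Lemma \ref{zzlem2.2}(b3) to get literal \emph{equality} of the two subsheaves of $A$ rather than a mere abstract isomorphism, which your setup already supplies.
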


The following lemma is easy.

\begin{lemma}
\label{zzlem2.4}
Let $A$ be an $\mathcal{O}_X$-order and assume $(A,Z)$ satisfies 
Hypothesis \ref{zzhyp2.1}.
\begin{enumerate}
\item[(1)]
Suppose that $A$ is a CM $Z$-module. Let $\fm\in X$ be a regular 
closed point. Then 
$$\MD(A_{\fm}/Z_{\fm})=\MD(A/Z)_{\fm}=(\MD(A/Z)_{\fm})^{\vee\vee}
$$ 
is a principal, hence reflexive, ideal of $Z_{\fm}$. As a 
consequence, the support of $\cD^{\vee\vee}/\cD$ has codimension 
$\geq 2$, or equivalently, $\GKdim \left(\MD(A/Z)^{\vee\vee}/
\MD(A/Z) \right) \leq \GKdim Z-2$. 
\item[(2)]
Suppose that $A$ is a CM normal $Z$-module. Let $\fm\in X$ be a 
regular closed point. Then 
$$\overline{\MD}(A_{\fm}/Z_{\fm})=\overline{\MD}(A/Z)_{\fm}=
(\overline{\MD}(A/Z)_{\fm})^{\vee\vee} $$ 
is a principal, hence reflexive, ideal of $A_{\fm}$. As a 
consequence, the support of 
$\overline{\cD}^{\vee\vee}/\overline{\cD}$ has codimension 
$\geq 2$, or equivalently, $\GKdim\left( \overline{\MD}(A/Z)^{\vee\vee}/
\overline{\MD}(A/Z)\right)\leq \GKdim A-2$. 
\item[(3)]
Suppose that $V$ is an affine open subset of $X$ such that 
$\restr{A}{V}$ is locally free. Then 
$$\restr{\MD(A/Z)}{V} = \MD(\restr{A}{V}/{\mathcal O}(V)) 
= \mathcal{O}_V(D)$$ 
where $D$ is given by the zero locus of the usual 
discriminant $d\in\mathcal{O}_V(V)$.
\end{enumerate} 
\end{lemma}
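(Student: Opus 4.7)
The plan is to reduce each local assertion to the classical free-module case and use Auslander--Buchsbaum to upgrade ``CM over a regular local ring'' to ``free.'' I will handle parts (1) and (2) in parallel and then treat (3) separately.

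For part (1), I would first observe that the formation of the modified discriminant ideal commutes with localization at central multiplicative sets: the trace map $\tr\colon A\to Z$ is $Z$-linear, so localizing at $\fm$ sends quasi-generating sets to quasi-generating sets and preserves the $w$-minor expressions defining $\MD$. This gives $\MD(A/Z)_{\fm} = \MD(A_{\fm}/Z_{\fm})$. Next, because $\fm$ is a regular closed point, $Z_{\fm}$ is a regular local ring, and $A_{\fm}$ is a finitely generated CM module over $Z_{\fm}$ by hypothesis. The Auslander--Buchsbaum formula then gives $\mathrm{pd}_{Z_{\fm}}(A_{\fm})=\dim Z_{\fm}-\mathrm{depth}_{Z_{\fm}}(A_{\fm})=0$, so $A_{\fm}$ is free over $Z_{\fm}$. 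For a free extension, $\MD(A_{\fm}/Z_{\fm})$ is generated by the classical discriminant $d(A_{\fm}/Z_{\fm})$ (Definition \ref{zzdef1.2} together with the last paragraph following it), and a principal ideal in a domain is trivially reflexive. The support statement then follows: on the regular locus of $X$, the stalks of $\cD^{\vee\vee}$ and $\cD$ agree, so the support of $\cD^{\vee\vee}/\cD$ is contained in the singular locus $X_{\mathrm{sing}}$, which has codimension $\geq 2$ by normality of $X$ (Serre's $R_1$ condition). Translating support into Gelfand--Kirillov dimension gives the claimed bound.

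Part (2) proceeds along the same lines but inside $A$. The identity $\overline{\MD}(A/Z)_{\fm}=\overline{\MD}(A_{\fm}/Z_{\fm})$ follows from part (1) together with flatness of $A_{\fm}$ over $A$. Once $A_{\fm}$ is free over $Z_{\fm}$, the extended ideal $\overline{\MD}(A_{\fm}/Z_{\fm})=\MD(A_{\fm}/Z_{\fm})A_{\fm}$ is generated by the central element $d(A_{\fm}/Z_{\fm})\in Z_{\fm}$; since central elements are normal, this is a principal ideal of $A_{\fm}$ in the sense required. To conclude reflexivity as an ideal of $A$, I use the hypothesis that $A$ is a normal (i.e.\ reflexive) $Z$-module, so that $A_{\fm}$ is reflexive over $Z_{\fm}$ and the principal ideal generated by a non-zero-divisor is itself reflexive. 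The codimension statement then follows as in part (1), with $X_{\mathrm{sing}}$ again playing the role of the exceptional locus; since $A$ is module-finite over $Z$, $\GKdim A=\GKdim Z$ and the codimension bound transfers verbatim.

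For part (3), I would argue locally on $V$. Since $A|_V$ is locally free, $V$ admits a cover by affine opens $V_i$ on which $A$ is a free $\mathcal{O}(V_i)$-module. On each such $V_i$, one can take a free basis as a (trivial) quasi-basis with all coefficients $c_j=1$, so Lemma \ref{zzlem1.9} collapses to $\MD(A|_{V_i}/\mathcal{O}(V_i))=(d_w)$, the principal ideal generated by the classical discriminant $d_w=d(A|_{V_i}/\mathcal{O}(V_i))$. The sheafification of these principal ideals glues to the ideal sheaf cutting out the zero locus of the global section $d\in\mathcal{O}_V(V)$ (or rather, the divisor $D$ determined by the local discriminants), which is the statement of (3).

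The main routine obstacle is verifying that localization truly commutes with $\MD$ and $\overline{\MD}$ under Hypothesis \ref{zzhyp1.1}; one must check that a quasi-basis of $A$ over $Z$ localizes to one of $A_{\fm}$ over $Z_{\fm}$ and that the trace does not change, which is where the $R$-linearity of $\tr$ and the compatibility of quasi-bases with localization are used. The conceptual heart is the Auslander--Buchsbaum reduction, which converts the CM hypothesis at regular points into the familiar free/classical-discriminant setting.
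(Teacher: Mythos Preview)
Your proposal is correct and follows essentially the same approach as the paper: the paper's proof of (1) invokes regularity of $Z_{\fm}$, the CM hypothesis, and Auslander--Buchsbaum to conclude $A_{\fm}$ is free (hence the modified discriminant ideal is principal and reflexive), declares the consequence ``clear,'' handles (2) by saying it is ``similar,'' and dispatches (3) with the remark that forming modified discriminants and reflexive hulls commutes with localization. Your write-up simply makes these steps explicit, including the observation that the support of $\cD^{\vee\vee}/\cD$ lies in $X_{\mathrm{sing}}$ via normality, so there is no substantive difference in strategy.
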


\begin{proof}
(1) Since $\fm$ is a smooth closed point, $Z_{\fm}$ is local and 
regular. Since $A$ is CM over $Z$, $A_{\fm}$ is CM over $Z_{\fm}$. 
By the Auslander--Buchsbaum formula, $A_{\fm}$ is a finite 
projective (hence free) module over $Z_{\fm}$. So the (modified) 
discriminant ideal is principal and hence reflexive. 

The consequence is clear.

(2) The proof is similar to the proof of part (1).

(3) By commutative algebra, taking reflexive hulls commutes 
with localization 
as does the formation of modified discriminants. Thus the 
assertion follows. 
\end{proof}

Let $U$ be the maximal open subvariety such that 
$\restr{\mathcal{D}}{U}$ is locally free. When $A$ is CM over $Z$, 
then $U$ contains the non-singular locus of $X$ by Lemma 
\ref{zzlem2.4}(1) (so the $\cR$-discriminant exists over an open 
subvariety whose complement has codimension $\geq 2$). We can write 
$\restr{\cD}{U}=\restr{\mathcal{D}^{\vee\vee}}{U}=\mathcal{O}_U(D)$ 
where $D$ is a Cartier divisor on $U$. Denote by $\iota:U\to X$ the 
inclusion map, then 
$\mathcal{D}^{\vee\vee} = \iota_\ast \mathcal{O}_U(D)$ by Lemma 
\ref{zzlem2.2}. Therefore one might consider this Cartier divisor 
as a ``shadow'' of the reflexive hull discriminant 
${\mathcal D}^{\vee\vee}$. In other words, the $\cR$-discriminant 
is a ``completion'' or ``closure'' of this Cartier divisor. A 
similar comment can be made for $\overline{\cD}$ which is 
considered as a subsheaf of (the sheafification of) $A$. It is 
interesting to work out the closed subvarieties so that $\cD$ 
(resp. $\cD^{\vee\vee}$, $\overline{\cD}$, and 
$\overline{\cD}^{\vee\vee}$) are not locally free.

The upshot of Lemmas \ref{zzlem2.3} and \ref{zzlem2.4} is that 
to compute $\mathcal{D}^{\vee\vee}$ (resp. 
$\overline{\cD}^{\vee\vee}$) we do not have to compute the 
modified discriminant ideal, i.e., we do not have to consider 
all $\mathrm{rk}(A/Z)\times \mathrm{rk}(A/Z)$-minors of a large 
matrix. This reduces the computation significantly. Once we have 
determined $D$ as a Cartier divisor on $U$ (or equivalently, a 
locally free ideal of $\mathcal{O}(U)$), the question of whether 
the reflexive hull of the modified discriminant ideal is 
principal becomes a question in algebraic geometry: is 
$\iota_\ast\mathcal{O}_U(D)$ locally free? Or equivalently, does 
the Cartier divisor $D$ on $U$ extend to a Cartier divisor on all 
of $X$? 
We illustrate this recipe by computing the $\cR$-discriminant of 
several classes of algebras in the following examples as well as in
Sections \ref{zzsec4} and \ref{zzsec5}.

\begin{example}
\label{zzex2.6}
Suppose ${\rm{char}}\; \kk\neq 2$. Let $A=\kk_\bp[x_1,x_2,x_3]$ 
as in Definition \ref{zzdef0.1} (for $n=3$). Note that $\csr(A/Z)$ 
exists by Theorem \ref{zzthm4.6}.
\begin{enumerate}
\item[(1)]
Let $(p_{12},p_{13},p_{23})=(-1,1,1)$. The center of $A$ is 
$Z=\kk[u, v, w]$ where $u = x_1^2, v = x_2^2$, and $w = x_3$. The rank of 
$A$ over $Z$ is 4. The discriminant, modified discriminant, and 
reflexive hull discriminant are all equal to $\csr(A/Z)=
\bar{\csr}(A/Z)=_{\kk^\times}(uv)^2=x_1^4x_2^4$.
\item[(2)]
Let $(p_{12},p_{13},p_{23})=(-1,-1,1)$. The center of $A$ is 
$\kk[u,v,w,z]/(vw-z^2)$ where $u=x_1^2$, $v=x_2^2$, $w=x_3^2$, 
and $z=x_2x_3$. The rank of $A$ over $Z$ is 4. Let $V$ be the 
open subset of $X$ with $v\neq 0$. Over $V$, the discriminant of 
$A_{v}$ is $(uv)^2$, or simply $u^2$, since $v^2$ is a unit in 
$Z_v$. Let $W$ be the open subset of $X$ with $w\neq 0$. Similarly, 
over $W$, the discriminant of $A_{w}$ is $u^2$. Note that 
$X\setminus (V\cup W)$ has codimension 2, so the $\cR$-discriminant 
is $\csr(A/Z)=\bar{\csr}(A/Z)=_{\kk^\times}u^2=x_1^4$, which agrees with the 
modified discriminant given in \cite[Example 1.3(3)]{CPWZ2}. The 
modified discriminant ideal is generated by 
$\{x_1^4 x_2^i x_3^{4-i}\}_{i=0}^4$ by \cite[Example 1.3(3)]{CPWZ2}.
\item[(3)]
Let $(p_{12},p_{13},p_{23})= (-1,-1,-1)$. The center of $A$ is 
$\kk[u, v, w, z]/(uvw - z^2)$ where $u = x_1^2$, $v = x_2^2$, $w = x_3^2$, 
and $z = x_1x_2x_3$. Let $U_3$ be the open subset of $X$ with 
$uv \neq 0$. Over $U_3$, the discriminant of $A_{uv}$ is $(x_1x_2)^4$ 
which is equivalent to $1$. Let $U_2$ (resp. $U_1$) be the open 
subset of $X$ with $uw \neq 0$ (resp, $vw \neq 0$). Since 
$X\setminus(U_1\cup U_2\cup U_3)$ has codimension 2 in $X$, the 
$\cR$-discriminant of $A/Z$ is $\csr(A/Z)=\bar{\csr}(A/Z)=_{\kk^\times} 1$.
\end{enumerate}
\end{example}

To conclude this section we prove a geometric version of Theorem 
\ref{zzthm1.14}.

\begin{theorem}
\label{zzthm2.5}
Let $\kk$ be an algebraically closed field. Suppose that
\begin{enumerate}
\item[(a)]
$A$ is a prime algebra that is module-finite over its center $Z$,
\item[(b)]
$Z$ is an affine normal CM domain and that $\csr(A/Z)$ exists, and
\item[(c)]
$A$ is CM as a module over $Z$.
\end{enumerate}
Let $(A',Z')$ be another pair satisfying {\rm{(a)--(c)}}. Let $w$ 
{\rm{(}}resp. $w'${\rm{)}} be the rank of $A$ {\rm{(}}resp. 
$A'${\rm{)}} over $Z$ {\rm{(}}resp. $Z'${\rm{)}}. Further assume 
that $A\otimes A'$ is prime. Then the following hold.
\begin{enumerate}
\item[(1)]
$Z\otimes Z'$ is a normal CM domain.
\item[(2)]
$A\otimes A'$ is a prime ring with center $Z\otimes Z'$ and 
$A \otimes A'$ is CM as a module over $Z\otimes Z'$.
\item[(3)]
$\csr(A\otimes A'/Z\otimes Z') =_{(A\otimes A')^\times} \csr(A/Z)^{w'} \otimes \csr(A/Z')^{w}$.
\item[(3')]
Suppose $A\otimes A'$ is reflexive over $Z\otimes Z'$. If 
$\csr(A/Z)$ is replaced by $\bar{\csr}(A/Z)$ in {\rm{(b)}}, then 
$\bar{\csr}(A\otimes A'/Z\otimes Z')=_{(Z\otimes Z')^\times}\bar{\csr}(A/Z)^{w'} 
\otimes \bar{\csr}(A/Z')^{w}$.
\end{enumerate}
\end{theorem}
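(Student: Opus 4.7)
The plan is to derive (3) and (3') geometrically via Lemma \ref{zzlem2.3}, after first disposing of (1) and (2) by standard commutative algebra. For (1), since $\kk$ is algebraically closed, the tensor product of two normal affine $\kk$-domains is again a normal domain, and the CM property tensors by \cite[Theorem 2.1]{BK}. For (2), primality is assumed; the center of $A \otimes A'$ is $Z \otimes Z'$ (any central element must centralize both tensor factors), and the CM property of $A$ over $Z$ together with that of $A'$ over $Z'$ yields CM of $A \otimes A'$ over $Z \otimes Z'$. Once (1) is known, Hypothesis \ref{zzhyp2.1} is available for the pair $(A \otimes A', Z \otimes Z')$, and we can pass freely to the geometric setting.

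For (3), set $d = \csr(A/Z)$ and $d' = \csr(A'/Z')$. Let $U \subseteq X = \Spec Z$ be the maximal open subvariety over which $A$ is locally free over $Z$; since $A$ is CM over $Z$, Lemma \ref{zzlem2.4}(1) shows that $X \setminus U$ has codimension $\geq 2$ and that $\restr{\MD(A/Z)}{U}$ coincides with the principal ideal generated by $d$. Define $U' \subseteq X' = \Spec Z'$ analogously with generator $d'$. Put $V = U \times U' \subseteq X \times X' = \Spec(Z \otimes Z')$. Because
\[ (X \times X') \setminus V \;\subseteq\; \bigl((X \setminus U) \times X'\bigr) \,\cup\, \bigl(X \times (X' \setminus U')\bigr), \]
and each term on the right has codimension $\geq 2$ in $X \times X'$, the complement of $V$ also has codimension $\geq 2$. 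By Lemma \ref{zzlem1.12}(2), on $V$ the restriction of $\MD((A \otimes A')/(Z \otimes Z'))$ agrees with the principal ideal generated by $d^{w'} \otimes (d')^w$. Lemma \ref{zzlem2.3}(2) then delivers (3). For (3'), the argument is parallel: replace $d, d'$ by $\bar{\csr}$-discriminants, which are normal elements of $A$ and $A'$ respectively, so that $d^{w'} \otimes (d')^w$ is normal in $A \otimes A'$, and apply Lemma \ref{zzlem2.3}(1) on the same open $V$ (using that $A \otimes A'$ is reflexive over $Z \otimes Z'$, which is the extra hypothesis in (3')).

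The main obstacle is the geometric bookkeeping that produces the open subscheme $V$ on which the tensor modified discriminant becomes principal, together with the codimension-$\geq 2$ condition on its complement. Once this is set up, Lemma \ref{zzlem1.12}(2) promotes the principality of $\MD(A/Z)$ on $U$ and of $\MD(A'/Z')$ on $U'$ to principality of $\MD((A \otimes A')/(Z \otimes Z'))$ on $V$, and the reflexive-hull machinery of Section \ref{zzsec2} finishes the argument with no further computation.
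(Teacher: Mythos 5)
Your argument is correct and follows essentially the same route as the paper's proof: restrict to the product of the open loci where $A$ and $A'$ are locally free over their centers (whose complement has codimension $\geq 2$ by normality and Lemma \ref{zzlem2.4}(1)), identify the modified discriminant there with the principal ideal generated by $d^{w'}\otimes (d')^{w}$ via Lemma \ref{zzlem1.12}(2), and conclude with the reflexive-hull/Cartier-divisor machinery of Section \ref{zzsec2} (the paper invokes Lemma \ref{zzlem2.2} directly where you invoke Lemma \ref{zzlem2.3}, which is the same thing). The only points worth making explicit are that Lemma \ref{zzlem1.12} formally requires quasi-bases, which you obtain locally on $V$ precisely because $A$ and $A'$ are free there (a basis is its own quasi-basis, $\bb=\mathbf{x}$, exactly the ``local version'' the paper uses), and that the CM assertion in (2) deserves the short Noether-normalization argument ($A$ free over a polynomial subring $S\subseteq Z$, hence $A\otimes A'$ free over $S\otimes S'$) rather than being asserted outright.
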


\begin{proof}
(1) By \cite[Lemma 1.1]{RoS}, $Z\otimes Z'$ is a domain. That 
$Z \tensor Z'$ is CM follows by the same argument as in Theorem 
\ref{zzthm1.14}. Since $\kk$ is algebraically closed,
every simple module over $Z\otimes Z'$ is a tensor 
product of simple modules over $Z$ and $Z'$ respectively. 
Thus, singular points on $\Spec Z\otimes Z'$ are of the
form $(z, z')$ where either $z\in \Spec Z$ or $z'\in \Spec Z'$ 
is singular. This implies that the singular locus 
of $\Spec Z\otimes Z'$ has codimension $\geq 2$. 
By Serre's criterion for normality, $Z\otimes Z'$ is normal.

(2) It is clear that the center of $A\otimes A'$ is $Z\otimes Z'$. 
Since $Z$ is affine and CM, by Noether normalization, there is a 
polynomial subring $S\subseteq Z$ such that $Z$ is a finite free 
module over $S$. Since $A$ is CM over $Z$, $A$ is a finite free 
module over $S$. Similarly, there is a polynomial subring $S' 
\subseteq Z'$ such that both $Z'$ and $A'$ are finite free module 
over $S'$. Then $A\otimes A'$ is finite and free over the central
polynomial subring $S\otimes S'$. Hence $A\otimes A'$ is CM over 
$S\otimes S'$ and over $Z\otimes Z'$. 

(3) Let $U$ be the non-singular locus of $X:=\Spec Z$. Then 
$X\setminus U$ has codimension $\geq 2$ in $X$ as $Z$ is normal. 
Similarly, the non-singular locus of $X':=\Spec Z'$, denoted by 
$U'$, has complement with codimension $\geq 2$ in $X'$. Since 
$\kk$ is algebraically closed, $U\times U'$ is an open scheme of 
the non-singular locus of $X\times X':=\Spec (Z\otimes Z')$ whose 
complement has codimension $\geq 2$ in $X\times X'$. By Lemma 
\ref{zzlem2.4}(1) and a local version of Lemma \ref{zzlem1.12} 
(with $\bb=\mathbf{x}$ and $\bb'=\mathbf{x}'$), over $U\times U'$, 
$\MD(A\otimes A'/Z\otimes Z')$ is equal to 
$\MD(A\otimes A'/Z\otimes Z')^{\vee\vee}$ and equal to the Cartier 
divisor ${\mathcal O}_{U\times U'}(D)$ where $D$ is determined by 
$d := \csr(A/Z)^{w'} \csr(A/Z')^{w}$. Since $d$ is defined over 
$X\times X'$, $\MD(A\otimes A'/Z\otimes Z')^{\vee\vee}$ is the 
principal ideal generated by $d$ by Lemma \ref{zzlem2.2}. The 
assertion follows.

(3') This is similar to the proof of (3).

\end{proof}

\section{\texorpdfstring{$\cR^{p}_{v}$-discriminants}
{Higher reflexive hull discriminants}}
\label{zzsec3}


Next, we introduce the $p$-power reflexive hull $v$-discriminant (or $\cR^{p}_v$-discriminant), as well as its extended counterpart, the $\overline{\cR}^{p}_v$-discriminant.
These notions generalize the $p$-power discriminants introduced in \cite{CYZ2}.
As we illustrate in Example~\ref{zzex3.2}, there are situations in which the $\cR$-discriminant does not exist, 
but for some $p$, the $\cR^{p}_{v}$-discriminants do exist. 
Hence, in these situations, the $\cR^{p}_{v}$-discriminant can serve as a useful invariant (see Theorem~\ref{zzthm3.3}).
For the sake of brevity, in this section, we omit some non-essential details.

\begin{definition} 
\label{zzdef3.1}
Assume Hypothesis \ref{zzhyp1.1} for $(B,R)$. Fix two positive 
integers $p,v$.
\begin{enumerate}
\item[(1)] 
The \emph{$p$-power $v$-discriminant ideal}, denoted by 
$\MD^{p}_v(B/R)$, is the ideal $(\MD_v(B/R))^p$ (where $\MD_v(B/R)$ 
is given in Definition \ref{zzdef1.2}(3)).
\item[(2)]
The {\it $p$-power reflexive hull $v$-discriminant ideal} or 
{\it $\cR^{p}_v$-discriminant ideal} of $B$ over $R$ is defined 
to be
$$\cR^{p}_v(B/R)=(\MD^{p}_v(B/R))^{\vee\vee}.$$
\item[(3)]
If further $\cR^{p}_v(B/R)$ is a principal ideal generated by an 
element $d$ in $R$, then $d$ is called the {\it $p$-power 
reflexive hull $v$-discriminant} or {\it $\cR^{p}_v$-discriminant} 
of $B$ over $R$ and denoted by $\csr^{[p]}_v(B/R)$. 
\item[(4)]
The {\it $p$-power extended reflexive hull $v$-discriminant ideal} 
or {\it $\overline{\cR}^{p}_v$-discriminant ideal} of $B$ over $R$ 
is defined to be
$$\overline{\cR}^{p}_v(B/R)
=\left(\overline{\MD}^{p}_v(B/R)\right)^{\vee\vee}.$$
\item[(5)]
If further $\overline{\cR}^{p}_v(B/R)$ is a principal ideal 
generated by a normal element $d$ in $B$, then $d$ is called the 
{\it $p$-power extended reflexive hull $v$-discriminant} or 
{\it $\overline{\cR}^{p}_v$-discriminant} of $B$ over $R$ and 
denoted by $\bar{\csr}^{[p]}_v(B/R)$. 
\end{enumerate}
It is clear that if it exists, $\csr^{[p]}_v(B/R)$ {\rm{(}}resp. 
$\bar{\csr}^{[p]}_v(B/R)${\rm{)}} is unique up to 
a unit in $R$ {\rm{(}}resp. in $B${\rm{)}}.
\end{definition}

Theorem \ref{zzthm4.8} in the next section indicates that if 
$R$ (resp. $B$) is nice enough, then $\csr^{[p]}_v(B/R)$ (resp. 
$\bar{\csr}^{[p]}_v(B/R)$) exists. In general, 
the existence of $\csr^{[p]}_v(B/R)$ (resp. 
$\bar{\csr}^{[p]}_v(B/R)$) is dependent on $(p,v)$ as the 
next example shows.

\begin{example}
\label{zzex3.2}
Let $Z = \kk[a,b,c]/(ab-c^3)$ and consider the $A_2$ singularity $X=\mathrm{Spec}(Z)$. The divisor class group $\mathrm{Div}(X)$ 
of $X$ is isomorphic to $\ZZ/3\ZZ$ with generator given by $I=(a,c)$. Let $A$ 
be the matrix algebra $\begin{pmatrix} Z & I\\ Z& Z\end{pmatrix}$. 
Then the modified discriminant is $\MD(A/Z)=I^2$. The rank of $A$ 
over its center $Z$ is 4. 
\begin{enumerate}
\item[(1)]
For every $q \geq 1$, $I^q$ is not principal, and so no $p$-power discriminant ideal $\MD(A/Z)^p$ is  principal.
\item[(2)]
The $\cR$-discriminant ideal $(I^2)^{\vee\vee}$ is not 
principal as $\mathrm{Div}(X)\cong \ZZ/3\ZZ$ with generator 
given by $I=(a,c)$. So $\csr(A/Z)$ (namely, $\csr^{[1]}_4(A/Z)$)
does not exist. 
\item[(3)]
Since $I^3=(a^3,a^2c,ac^2,c^3)=a(a^2,ac,c^2,b)$ and 
$\GKdim(Z/(a^2,ac,c^2,b))=0$ we have, by Lemma \ref{zzlem1.5}(1), 
that $(I^3)^{\vee\vee}=(a)$ is principal. Hence 
$(I^6)^{\vee\vee}=(a^2)$ is also principal. This means that 
$\csr^{[3]}_4(A/Z)=_{\kk^\times}a^2$.
\item[(4)]
One can check that 
$$\csr^{[p]}_w(A/Z)=\begin{cases}
1 & 1\leq w \leq 3, \quad p\geq 1,\\
a^{\frac{p}{3}} & w=3, \quad 3\mid p,\\
{\text{does not exist}} & w=3 {\text{ or }} w=4, \quad 3\nmid p,\\
a^{\frac{2p}{3}} & w=4, \quad 3\mid p, \\
0 & w\geq 5, \quad p\geq 1.
\end{cases}$$
\end{enumerate}
\end{example}

The proof of the following result is similar to the proof of 
Theorem \ref{zzthm1.11} and is omitted. 

\begin{theorem}
\label{zzthm3.3}
Let $(A,Z)$ satisfy Hypotheses \ref{zzhyp1.1} and \ref{zzhyp1.10}(1) 
where $Z$ is the center of $A$. Fix two positive integers $p,v$.
\begin{enumerate}
\item[(1)] 
Suppose $d$ is an $\cR^{p}_v$-discriminant of $A$ over $Z$.
If $g \in \Aut(A)$, then 
$g(d)=_{Z^{\times}}d$.
\item[(2)] 
Suppose $d$ is an $\overline{\cR}^{p}_v$-discriminant of $A$ over $Z$.
If $g \in \Aut(A)$, then 
$g(d)=_{A^{\times}}d$.
\item[(3)]
Let $d$ be either $\csr(A/Z)$ or $\bar{\csr}(A/Z)$. Suppose 
that ${\text{char}}\; \kk=0$ and that $A^{\times}=\kk^{\times}$. 
If $\delta$ is a locally nilpotent derivation of $A$, then 
$\delta(d)=0$.
\end{enumerate}
\end{theorem}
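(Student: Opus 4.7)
The plan is to adapt the proof of Theorem \ref{zzthm1.11} almost verbatim, replacing $\MD(A/Z)$ by the $p$-power $v$-discriminant ideal $\MD^{p}_{v}(A/Z) = (\MD_{v}(A/Z))^{p}$ throughout. The one preliminary I need is that any $g \in \Aut(A)$ preserves $\MD_{v}(A/Z)$. This is a routine generalization of \cite[Lemma 1.4]{CPWZ2} from the rank $w$ case to arbitrary $v$: it follows from the identity $g(d_{v}(\cU, \cU')) = d_{v}(g(\cU), g(\cU'))$ (valid because $g$ restricts to an automorphism of $Z$ and commutes with the regular trace) together with the observation that $g$ permutes $v$-element subsets of $A$. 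Taking $p$th powers then shows $g$ preserves $\MD^{p}_{v}(A/Z)$, and since $g$ is an automorphism of $A$, it also preserves the extension $\overline{\MD}^{p}_{v}(A/Z) = \MD^{p}_{v}(A/Z)\cdot A$.

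For parts (1) and (2), I would then invoke Lemma \ref{zzlem1.6}(1) and (2) respectively, applied to $I = \MD^{p}_{v}(A/Z)$ and to $\overline{\MD}^{p}_{v}(A/Z)$, to conclude that $\cR^{p}_{v}(A/Z) = I^{\vee\vee}$ (resp.\ $\overline{\cR}^{p}_{v}(A/Z)$) is a $g$-invariant ideal of $Z$ (resp.\ of $A$). If this ideal is principal with generator $d$, then $g(d)$ is also a generator, so $g(d) = cd$ for some unit $c$ in $Z^{\times}$ (resp.\ $A^{\times}$), which is the desired conclusion.

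Part (3) as stated literally coincides with Theorem \ref{zzthm1.11}(3) and requires no new argument. For the $p$-power analogue $\csr^{[p]}_{v}(A/Z)$ or $\bar{\csr}^{[p]}_{v}(A/Z)$, the standard exponentiation trick applies: extend $\delta$ to $A[t]$ by $\delta(t) = 0$, and use the local nilpotence of $\delta$ to define the automorphism $\exp(t\delta)$ of $A[t]$. Applying part (1) or (2) gives $\exp(t\delta)(d) = c(t)\, d$ for a unit $c(t)$; the hypothesis $A^{\times} = \kk^{\times}$ in characteristic zero forces $(A[t])^{\times} = \kk^{\times}$, so $c(t)$ is a scalar with $c(0) = 1$, i.e.\ $c(t) = 1$. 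Extracting the coefficient of $t$ then yields $\delta(d) = 0$. The only minor point requiring care is the $v$-discriminant version of \cite[Lemma 1.4]{CPWZ2} noted above; every other step transcribes directly from the proof of Theorem \ref{zzthm1.11}, so no substantial new obstacle arises.
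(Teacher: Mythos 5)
Your proposal is correct and matches the paper's intent exactly: the paper omits the proof of Theorem \ref{zzthm3.3}, stating only that it is ``similar to the proof of Theorem \ref{zzthm1.11}'', and your argument is precisely that adaptation (the $v$-subset version of \cite[Lemma 1.4]{CPWZ2}, stability of the ideal under taking $p$th powers and extension to $A$, Lemma \ref{zzlem1.6} for the reflexive hull, and the $\exp(t\delta)$ trick of \cite[Proposition 1.5]{CPWZ2} for part (3)). Your observation that part (3) as printed literally repeats Theorem \ref{zzthm1.11}(3), with the $p$-power version handled by the same exponentiation argument, is also accurate.
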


\section{Proof of Theorem \ref{zzthm0.2}}
\label{zzsec4}

In this section we first make a few elementary remarks about 
Hypotheses \ref{zzhyp1.1}, \ref{zzhyp1.10}, and \ref{zzhyp2.1}
and then prove Theorem \ref{zzthm0.2}.

Let $A$ be a prime affine algebra that is module-finite over 
its center $Z:=Z(A)$. Following Brown and Hajarnavis \cite{BH}, 
$A$ is called {\it homologically homogeneous} (or {\it hom-hom} 
for short) of dimension $d$ if all simple $A$-modules have the 
same projective dimension $d$ (see also \cite{SVdB, SZ}). 
Hom-hom rings appear naturally in several contexts.

\begin{example}
\label{zzex4.1} 
The following are examples of hom-hom rings:
\begin{enumerate}
\item[(1)]
Affine noetherian prime Hopf algebras that are module-finite over 
their centers and have finite global dimension \cite[Theorem A]{BG}.
\item[(2)]
Connected graded noetherian Artin--Schelter regular PI algebras
\cite{SZ}. These include the PI skew polynomial rings defined in 
Definition \ref{zzdef0.1}.
\item[(3)]
Noncommutative crepant resolutions, in the sense of Van den Bergh
\cite{VdB2}. 
\end{enumerate}
\end{example}

The results in the following lemma are well-known.

\begin{lemma} \cite{BH, SVdB, SZ}
\label{zzlem4.2} 
Let $A$ be a hom-hom ring with center $Z$. 
\begin{enumerate}
\item[(1)]
$Z$ is normal. As a consequence, Hypothesis \ref{zzhyp2.1} holds.
\item[(2)]
The image of $\tr$ is in $Z$. As a consequence, Hypothesis 
\ref{zzhyp1.1} holds.
\item[(3)]
Suppose $Z$ is affine. Then $A$ is a reflexive module over $Z$. 
\end{enumerate}
\end{lemma}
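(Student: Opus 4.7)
The plan is to deduce all three parts by invoking the structural theory of homologically homogeneous rings developed in the cited references \cite{BH, SVdB, SZ}, together with the classical trace result of Reiner already recalled in the introduction.

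For part (1), the first step is to recall that since $A$ is prime, its center $Z$ is automatically a domain. The key input is the theorem of Brown--Hajarnavis \cite{BH} (also discussed by Stafford--Van den Bergh \cite{SVdB} and in \cite{SZ}) which shows that the center of a hom-hom ring is a normal Cohen--Macaulay ring. Normality of $Z$ is exactly the geometric condition that $X = \Spec Z$ is an affine integral normal $\kk$-variety, and together with Hypothesis \ref{zzhyp1.1} (verified in part (2)), this is the content of Hypothesis \ref{zzhyp2.1}.

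For part (2), once $Z$ is known to be a normal domain by part (1), I would directly invoke Reiner's theorem \cite[Theorem 10.1]{Rei} (quoted right after Hypothesis \ref{zzhyp1.1} in the paper): for a prime ring $B$ with center $R$ a normal domain, the regular trace has image in $R$. Applying this with $B = A$ and $R = Z$ gives Hypothesis \ref{zzhyp1.1}(2); the remaining clause \ref{zzhyp1.1}(1) is immediate from the standing assumption that $A$ is a prime affine algebra module-finite over $Z$.

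For part (3), the plan is to combine the Cohen--Macaulay property of $A$ as a $Z$-module with the normality of $Z$. Specifically, the hom-hom hypothesis means every simple $A$-module has projective dimension equal to a common value $d$; by the Auslander--Buchsbaum equality applied locally at each maximal ideal of $Z$, this forces $A$ to be a maximal Cohen--Macaulay module over $Z$ (see \cite{SVdB, SZ}). Under the affineness assumption on $Z$, we now have a finitely generated maximal CM module over a normal CM domain. Such a module satisfies Serre's condition $S_2$, and over a normal noetherian domain a finitely generated torsion-free $S_2$-module is reflexive; since $A$ embeds in its total ring of fractions (as $A$ is prime and module-finite over $Z$), torsion-freeness over $Z$ is automatic, giving reflexivity.

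The main obstacle in turning this sketch into a full proof is not any single step, since each ingredient is documented in \cite{BH, SVdB, SZ}, but rather bookkeeping: one must be careful to verify that the form of Auslander--Buchsbaum and the "maximal CM $\Rightarrow$ reflexive" implication hold as stated in the module-finite PI setting, and that the local-to-global passage from CM of simples to CM of $A$ as a $Z$-module is legitimate. All of these facts are standard in the cited literature, so the proof reduces to assembling the appropriate citations in the correct order.
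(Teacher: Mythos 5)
Your proposal is correct, and for parts (2) and (3) it takes a genuinely different (and legitimate) route from the paper. For (1) you and the paper both reduce to Brown--Hajarnavis (the paper cites \cite[Theorem 6.1]{BH}, noting that a Krull domain is a noetherian normal domain). For (2) the paper cites \cite[Theorem 5.4(iii)]{SZ} to the effect that a hom-hom ring equals its trace ring, which gives the trace property by definition; you instead chain part (1) into Reiner's \cite[Theorem 10.1]{Rei}, exactly the mechanism the paper itself advertises immediately after Hypothesis \ref{zzhyp1.1} --- so your route is equally well supported and arguably more self-contained within the paper's own framework. For (3) the paper invokes \cite[Theorem 2.3(1,4)]{SVdB} to get that $A$ is a CM tame $Z$-order, passes to a Noether normalization $R\subseteq Z$ over which $A$ is free (hence reflexive over $R$), and then transfers reflexivity back to $Z$ via \cite[Lemma 2.1]{SVdB}; you stay over $Z$ and use the criterion that a finitely generated torsion-free $S_2$-module over a noetherian normal domain is reflexive. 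That criterion is exactly what the paper uses later (via \cite[Proposition 1.9]{Ha}) in the proofs of Theorem \ref{zzthm5.11} and Lemma \ref{zzlem5.13}, so your argument is consistent with the paper's own toolkit; what it buys is avoiding the detour through the polynomial subring and the tame-order transfer lemma, at the cost of needing the $S_2$ criterion as a black box. One small correction: your parenthetical claim in part (1) that the center of a hom-hom ring is Cohen--Macaulay is not true in general --- the paper's Lemma \ref{zzlem4.3} needs a hypothesis on ${\rm char}\,\kk$ versus the rank precisely because CM-ness of $Z$ can fail (see the remark after that lemma citing \cite[Example 7.3]{BHM}). Fortunately none of your three arguments actually uses CM-ness of $Z$ (normality of $Z$ plus the CM/$S_2$ property of $A$ as a $Z$-module suffice), so this is an overclaim rather than a gap.
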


\begin{proof} 
(1) This is \cite[Theorem 6.1]{BH} (or \cite[Theorem 5.6(ii)]{SZ}).
Note that a Krull domain is just a noetherian normal domain.

(2) By \cite[Theorem 5.4(iii)]{SZ}, $A$ is equal to its trace 
ring, which implies, by definition, that the image of $\tr$ is 
in $Z$. 

(3) By \cite[Theorem 2.3(1,4)]{SVdB}, $A$ is a CM tame $Z$-order 
in the sense of \cite[Section 2]{SVdB}. So $A$ is a finitely 
generated CM (and then free) module over a polynomial subring $R$ 
of $Z$. Hence $A$ is reflexive over $R$. It follows from 
\cite[Lemma 2.1]{SVdB} that $A$ is also reflexive over $Z$. 
\end{proof}

\begin{lemma}
\label{zzlem4.3} 
Let $A$ be a prime ring with center $Z$. Suppose that $Z$ is 
normal and that ${\text{char}}\; \kk$ does not divide the rank 
of $A$ over $Z$. Then $Z$ is CM. As a consequence, Hypothesis 
\ref{zzhyp1.10}(1) holds.
\end{lemma}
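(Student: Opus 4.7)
The plan is to verify the two conclusions of Hypothesis \ref{zzhyp1.10}(1) separately: that $Z$ is affine and that $Z$ is Cohen--Macaulay. Affineness is immediate from the Artin--Tate lemma, since $A$ is an affine $\kk$-algebra finitely generated as a module over its central subalgebra $Z$, hence $Z$ is itself an affine $\kk$-algebra.

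For the Cohen--Macaulay property, my plan is a trace-splitting argument followed by a depth-descent. First, because $Z$ is a normal domain and $A$ is prime module-finite over $Z$, \cite[Theorem 10.1]{Rei} (cf.\ the discussion after Hypothesis \ref{zzhyp1.1} and Lemma \ref{zzlem4.2}(2)) guarantees that the image of $\tr$ lies in $Z$, so Hypothesis \ref{zzhyp1.1}(2) holds for $(A,Z)$. Since $\mathrm{char}\,\kk \nmid w := \rk(A/Z)$, the scalar $w$ is a unit in $\kk\subseteq Z$, so the normalized trace
\[
e \;:=\; w^{-1}\tr \,:\, A \longrightarrow Z
\]
is a well-defined $Z$-linear map. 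As $\tr|_{Z} = w\cdot\mathrm{id}_{Z}$, we have $e|_{Z} = \mathrm{id}_{Z}$, exhibiting $e$ as a retraction of the inclusion $Z \hookrightarrow A$. This yields a $Z$-module decomposition $A = Z \oplus \ker(e)$, so $Z$ is a $Z$-direct summand of $A$.

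The final step is a depth-descent. For each maximal ideal $\fm\subseteq Z$, localization preserves the splitting, giving $A_{\fm} = Z_{\fm} \oplus \ker(e)_{\fm}$ and hence
\[
\mathrm{depth}_{Z_{\fm}}(Z_{\fm}) \;\geq\; \mathrm{depth}_{Z_{\fm}}(A_{\fm}).
\]
Provided $A$ is Cohen--Macaulay as a $Z$-module, so that $\mathrm{depth}_{Z_{\fm}}(A_{\fm}) = \dim Z_{\fm}$, this inequality forces $\mathrm{depth}_{Z_{\fm}}(Z_{\fm}) = \dim Z_{\fm}$ for every $\fm$, so $Z$ is Cohen--Macaulay.

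The main obstacle I anticipate is justifying the Cohen--Macaulayness of $A$ as a $Z$-module, which is the key input for the depth-descent step and is not stated among the explicit hypotheses. I expect this to enter through the paper's implicit running context: in the principal situations of interest (e.g., when $A$ is homologically homogeneous, as in Example \ref{zzex4.1} and Lemma \ref{zzlem4.2}), the cited Brown--Hajarnavis and Stafford--Van den Bergh results supply precisely the needed Cohen--Macaulayness of $A$ as a module over $Z$, completing the argument via the trace splitting.
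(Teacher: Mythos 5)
Your argument is essentially the paper's: in characteristic $p>0$ the paper uses exactly this trace-splitting ($\tr$ restricted to $Z$ is a nonzero scalar multiple of the identity, so $Z$ is a $Z$-module direct summand of $A$) and then invokes \cite[Theorem 2.5]{BH} for the depth descent, while in characteristic $0$ it simply cites \cite[Theorem 2.3(3)]{SVdB}; your uniform normalized-trace version covers both cases at once. The missing ingredient you correctly flag --- Cohen--Macaulayness of $A$ as a $Z$-module --- is indeed an implicit running hypothesis rather than a defect of your argument alone: the lemma is applied in the homologically homogeneous setting (cf.\ Example \ref{zzex4.1} and Lemma \ref{zzlem4.2}), and the results the paper cites carry that hypothesis as well.
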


\begin{proof} If ${\text{char}}\; \kk=0$, then the result follows from 
\cite[Theorem 2.3(3)]{SVdB}. Now suppose that 
${\text{char}}\; \kk>0$.
Since ${\text{char}}\; \kk$ does not divide the rank of $A$ over 
$Z$, $\tr$ is a nonzero scalar multiple of the identity map 
when restricted to $Z$. Then $Z$ is a $Z$-module direct summand 
of $A$, and the assertion follows from \cite[Theorem 2.5]{BH}.
\end{proof}

Note that Hypothesis \ref{zzhyp1.10}(1) fails if ${\text{char}}\; 
\kk$ divides ${\rm{rk}}_Z(A_Z)$, see \cite[Example 7.3]{BHM} for 
a local example. An affine version can be made using the idea
of \cite[Example 7.3]{BHM}, see explanation in 
\cite[Example 2.3(vi)]{BM}.

Hypothesis \ref{zzhyp1.10}(2) is a technical condition and we 
expect that Lemma \ref{zzlem1.12} and Theorem \ref{zzthm1.14}
hold without this hypothesis, see Theorem \ref{zzthm2.5}. 

For a hom-hom ring $A$, the dualizing module $\omega_A$ is 
defined in \cite[p.663]{SVdB}. We say $A$ is {\it Calabi--Yau}
if $\omega_A\cong A$ as $A$-bimodule \cite[Remark 3.2]{SVdB}.
When $A$ is a prime affine algebra and module-finite over its 
center, this definition is equivalent to other definitions, 
for example, the one given given by Ginzburg \cite{ginz}.

\begin{lemma}
\label{zzlem4.4}
Let $A$ be an affine Calabi--Yau prime algebra that is 
module-finite over its center $Z$. Then $Z$ is Gorenstein.
\end{lemma}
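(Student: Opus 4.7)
The strategy is to combine the Calabi--Yau condition $\omega_A\cong A$ with duality for the finite central extension $Z\subseteq A$ to deduce that $\omega_Z$ is an invertible $Z$-module, which is equivalent to $Z$ being Gorenstein. Since Calabi--Yau is defined only for hom-hom rings, Lemma~\ref{zzlem4.2} applies: $Z$ is a normal affine domain, $\tr(A)\subseteq Z$, and $A$ is a reflexive $Z$-module. Moreover, by the Stafford--Van den Bergh characterization of hom-hom rings as CM tame $Z$-orders, $Z$ is Cohen--Macaulay and $A$ is CM as a $Z$-module. Thus $Z$ admits a canonical module $\omega_Z$, and it suffices to show that $\omega_Z$ is locally free of rank one.

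Next I would invoke duality for the finite morphism $\pi\colon\Spec A\to\Spec Z$ in the bimodule setting. Because $A$ is CM over the CM base $Z$, the derived computation $\pi^!\omega_Z=\mathbf{R}\Hom_Z(A,\omega_Z)$ is concentrated in degree zero, producing an $A$-bimodule identification $\omega_A\cong\Hom_Z(A,\omega_Z)$ (the noncommutative analogue of $\pi^!\omega_Z=\omega_A$). Feeding the Calabi--Yau isomorphism $\omega_A\cong A$ into this identification yields the key input
\[
\Hom_Z(A,\omega_Z)\cong A
\]
as $A$-bimodules.

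To conclude, I would extract invertibility of $\omega_Z$ from this bimodule isomorphism. My preferred route is to work locally on $\Spec Z$: for each prime $\mathfrak{p}$, the localized isomorphism $\Hom_{Z_\mathfrak{p}}(A_\mathfrak{p},\omega_{Z_\mathfrak{p}})\cong A_\mathfrak{p}$ equips the faithful, module-finite $Z_\mathfrak{p}$-algebra $A_\mathfrak{p}$ with a symmetric Frobenius structure valued in $\omega_{Z_\mathfrak{p}}$, and this forces $\omega_{Z_\mathfrak{p}}$ to be free of rank one over the local CM ring $Z_\mathfrak{p}$. Gluing yields that $\omega_Z$ is invertible. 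An alternative, more global route is to apply $\Hom_{A^{e}}(A,-)$ to the bimodule isomorphism, obtaining $\Hom_Z(A/[A,A],\omega_Z)\cong Z$ via Hom-tensor adjunction, and then using the natural map $Z\hookrightarrow A/[A,A]$ to conclude that $\omega_Z$ is a cyclic, hence (being a rank-one reflexive module over the normal domain $Z$) invertible, $Z$-module.

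I expect the last step to be the main obstacle, especially in positive characteristic: when $\mathrm{char}\,\kk$ divides $\rk(A/Z)$, the regular trace no longer splits the inclusion $Z\hookrightarrow A$ of $Z$-modules, so one cannot naively produce a retraction $\Hom_{Z_\mathfrak{p}}(A_\mathfrak{p},\omega_{Z_\mathfrak{p}})\twoheadrightarrow\omega_{Z_\mathfrak{p}}$ from the underlying $Z_\mathfrak{p}$-module structure alone. The argument must use the full $A_\mathfrak{p}$-bimodule structure of the Frobenius-type isomorphism to recover cyclicity of $\omega_{Z_\mathfrak{p}}$; this is the technically delicate point of the proof.
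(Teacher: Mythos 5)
The paper's entire proof of this lemma is a citation: it follows from \cite[Lemma 2.5(5)]{SVdB}, where the dualizing module $\omega_A=\Hom_Z(A,\omega_Z)$ of a hom-hom ring is set up and the relation between $\omega_A\cong A$ and the Gorenstein property of $Z$ is established. So what you are attempting is essentially a from-scratch proof of that cited result. Your first two steps are sound and do match the framework of \cite{SVdB}: since Calabi--Yau presupposes hom-hom, Lemma \ref{zzlem4.2} gives that $Z$ is an affine normal CM domain with $A$ a CM (maximal Cohen--Macaulay) reflexive $Z$-module, so $\omega_Z$ exists, $\mathbf{R}\Hom_Z(A,\omega_Z)$ is concentrated in degree zero, and the Calabi--Yau hypothesis yields an $A$-bimodule isomorphism $\Hom_Z(A,\omega_Z)\cong A$.

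The genuine gap is the final step, which you yourself flag as ``the technically delicate point'': neither of your two proposed mechanisms for extracting invertibility of $\omega_Z$ actually works as stated. For the local route, the assertion that a symmetric Frobenius-type isomorphism $\Hom_{Z_{\mathfrak{p}}}(A_{\mathfrak{p}},\omega_{Z_{\mathfrak{p}}})\cong A_{\mathfrak{p}}$ ``forces'' $\omega_{Z_{\mathfrak{p}}}$ to be free of rank one is exactly the content to be proved, and no mechanism is given: the element $\lambda=\phi(1)$ is a central $Z$-linear form $A_{\mathfrak{p}}\to\omega_{Z_{\mathfrak{p}}}$, but there is no a priori reason its image generates $\omega_{Z_{\mathfrak{p}}}$ when $A_{\mathfrak{p}}$ is not free over $Z_{\mathfrak{p}}$. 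For the global route, applying $\Hom_{A^e}(A,-)$ does give $\Hom_Z(A/[A,A],\omega_Z)\cong Z$, but the natural map $Z\to A/[A,A]$ need not be injective --- over the fraction field $F$ of $Z$, the algebra $A\otimes_Z F$ is central simple of some degree $r$, and $F\subseteq[A_F,A_F]=\ker(\mathrm{trd})$ whenever $\operatorname{char}\kk$ divides $r$ --- and even when it is injective, the restriction map $\Hom_Z(A/[A,A],\omega_Z)\to\Hom_Z(Z,\omega_Z)=\omega_Z$ need not be surjective (there is an $\Ext^1$ obstruction), so cyclicity of $\omega_Z$ does not follow. Note also that since $Z$ is only normal, a rank-one reflexive module can fail to be invertible even though it is trivial off a codimension-two set (compare Example \ref{zzex3.2}), so no codimension argument can rescue this. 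To make the argument complete you would need to supply the missing implication, e.g.\ by reducing to a Noether normalization $R\subseteq Z$ over which $A$ and $Z$ are finite free and analyzing the trace pairing there --- which is, in effect, what the cited \cite[Lemma 2.5]{SVdB} does.
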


\begin{proof}
This follows from \cite[Lemma 2.5(5)]{SVdB}.
\end{proof}

For the rest of this section we prove Theorem \ref{zzthm0.2}.
The skew polynomial ring $\kk_\bp[x_1,\hdots,x_n]$, as in 
Definition \ref{zzdef0.1}, is $\ZZ^n$-graded with $\deg x_i=e_i$. 
It is also connected $\NN$-graded when we set $\deg x_i=1$ for 
all $i$.

\begin{lemma}
\label{zzlem4.5}
Let $A$ be the skew polynomial ring $\kk_\bp[x_1,\ldots,x_n]$ 
such that each $p_{ij}$ is a root of unity. Let $r^2$ be the 
rank of $A$ over its center $Z$. 
\begin{enumerate}
\item[(1)]
${\rm{char}}\; \kk$ and $r$ are coprime. As a consequence, $Z$ 
is a CM normal domain.
\item[(2)]
Let $M$ be an ideal of $A$ generated by monomials, namely,
${\mathbb Z}^n$-homogeneous elements. Then $M^{\vee\vee}$ 
is a principal ideal of $A$.  
\item[(3)]
For every monomial $f$, $\tr(f)=\begin{cases} r^2 f & f\in Z\\
0 & f\not\in Z.\end{cases}$
\item[(4)]
Let $\cU$ and $\cU'$ be two $v$-element sets of monomials. Then  
\[ d_v(\cU, \cU')\in Z\cap \kk\prod_{f\in \cU, f'\in \cU'} ff',\]
which is a monomial in $Z$.
\item[(5)]
For every pair of positive integers $(p,v)$, $\MD_v^p(A/Z)$ is generated by a set of 
monomials in $Z$.
\item[(6)]
For every pair of positive integers $(p,v)$, $\overline{\MD}_v^p(A/Z)$ is an ideal of 
$A$ generated by a set of monomials in $Z$.
\end{enumerate}
\end{lemma}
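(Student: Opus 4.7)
The strategy is to exploit the $\mathbb{Z}^n$-grading on $A = \kk_\bp[x_1,\ldots,x_n]$ and view $A$ as $\Gamma$-graded, where $\Gamma := \mathbb{Z}^n/L$ is the quotient by the \emph{center lattice} $L$ of multi-degrees of central monomials; the identity component of this grading is $Z$ and $|\Gamma|=r^2$. For (1), each $p_{ij}\in\kk^\times$ has finite order coprime to $\mathrm{char}\,\kk$, so $L\supseteq N\mathbb{Z}^n$ for some $N$ coprime to $\mathrm{char}\,\kk$, forcing $r^2=|\Gamma|$ to divide $N^n$ and hence every prime divisor of $r$ to divide $N$; thus $r$ is coprime to $\mathrm{char}\,\kk$. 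Lemma \ref{zzlem4.3} then gives that $Z$ is CM, and Lemma \ref{zzlem4.2}(1) (applicable since $A$ is hom-hom by Example \ref{zzex4.1}(2)) gives that $Z$ is normal. For (2), set $d=\prod_j x_j^{b_j}$, where $b_j$ is the minimum $x_j$-exponent across the monomial generators of $M$, so $M=dM'$ with $M'\not\subseteq (x_j)$ for every $j$. Since the $\mathbb{N}^n$-graded primes of $A$ are precisely the ideals $(x_{i_1},\ldots,x_{i_k})$ and minimal primes over a graded ideal are graded, every minimal prime over $M'$ has height $\ge 2$; affine PI theory then yields $\GKdim(A/M')\le\GKdim A-2$, so $M$ satisfies PCC with respect to $d$, and Lemma \ref{zzlem1.5}(1) (whose hypotheses follow from (1) together with Lemma \ref{zzlem4.2}(3)) delivers $M^{\vee\vee}=dA$.

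For part (3), choose a system of monomial coset representatives $\{m_\gamma\}_{\gamma\in\Gamma}$, which forms an $F$-basis of $A_F$ over $F=\mathrm{Frac}(Z)$. For a monomial $f$ of multi-degree $\alpha$, left multiplication by $f$ carries the $\gamma$-component into the $\overline{\alpha}+\gamma$-component, so it produces a diagonal entry at $m_\gamma$ only when $\overline{\alpha}=0$ in $\Gamma$, equivalently when $f\in Z$. When $f\in Z$, $f$ is central and each of the $r^2$ diagonal entries equals $f$, giving $\tr(f)=r^2 f$; otherwise $\tr(f)=0$.

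Parts (4)--(6) are then routine. By (3), each entry $\tr(u_iu_j')$ in the defining matrix of $d_v(\cU,\cU')$ is either zero or a $\kk$-scalar times a fixed central monomial, and every nonzero term in the determinant expansion has the same $\mathbb{Z}^n$-degree (namely the sum of multi-degrees of all the $u_i$ and $u_j'$), so $d_v(\cU,\cU')$ is a $\kk$-scalar multiple of a single monomial in $Z$, proving (4). Part (5) follows by multilinearity of the determinant (which reduces an arbitrary $\cU,\cU'$ to a $\kk$-linear combination of monomial $\cU_0,\cU_0'$) together with the observation that a power of a monomial ideal is monomial; part (6) is immediate from $\overline{\MD}_v^p(A/Z)=\MD_v^p(A/Z)\cdot A$. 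The main obstacle is the trace calculation in part (3): the $\Gamma$-grading picture must be set up carefully so that the trace is visibly the projection onto the identity component scaled by $|\Gamma|=r^2$; once this is in hand, the remaining parts follow with little further work.
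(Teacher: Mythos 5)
Your proposal is correct, and parts (1), (3), (5), (6) follow essentially the paper's own argument: (1) via the same ``rank divides a product of orders of roots of unity'' observation plus Lemmas \ref{zzlem4.3} and \ref{zzlem4.2}(1), and (3) via a monomial semi-basis on which left multiplication by a non-central monomial acts with zero diagonal. For (4) the paper simply cites \cite[Lemma 2.6]{CPWZ2}, whereas you supply the short homogeneity argument (every nonzero term of the determinant expansion has the same $\ZZ^n$-degree); that is a legitimate self-contained substitute.

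The genuine divergence is in part (2). The paper factors out the gcd $d$ implicitly by working locally: it introduces the central elements $a_i=x_i^{w_i}$, covers $X=\Spec Z$ by the opens $U_i=\{\prod_{j\ne i}a_j\ne 0\}$, checks via a regular-sequence argument that the complement of $\bigcup_i U_i$ has codimension $\ge 2$, computes $\restr{M}{U_i}=x_i^{s_i}\restr{A}{U_i}=\restr{(dA)}{U_i}$ explicitly, and concludes by the reflexive-sheaf gluing criterion of Lemma \ref{zzlem2.2}. You instead write $M=dM'$ with $M'$ not contained in any $(x_j)$, classify the $\ZZ^n$-graded primes of $A$ as the ideals generated by subsets of the variables, deduce $\GKdim(A/M')\le\GKdim A-2$, and invoke the PCC criterion of Lemma \ref{zzlem1.5}(1). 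Both arguments rest on the same codimension-$\ge 2$ phenomenon, and the hypotheses you need for Lemma \ref{zzlem1.5}(1) ($Z$ affine CM, $A$ a CM reflexive $Z$-module) are indeed available from part (1), Example \ref{zzex4.1}(2), and Lemma \ref{zzlem4.2}. Your route is arguably cleaner in that it reuses the algebraic machinery of Section \ref{zzsec1} rather than redoing a local computation, but it leans on two facts you should justify or reference (that minimal primes over a $\ZZ^n$-graded ideal are graded, and that $\GKdim(A/I)$ is the maximum of $\GKdim(A/P)$ over minimal primes $P\supseteq I$ for affine noetherian PI rings); the paper's version has the side benefit of exhibiting the explicit open cover and the identity $\restr{A}{U_i}=\kk_{\bp}[x_1^{\pm1},\dots,x_i,\dots,x_n^{\pm1}]$, which is in the spirit of the geometric recipe of Section \ref{zzsec2}.
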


\begin{proof}
(1) For each $i,j$, let $n_{ij}$ denote the order of $p_{ij}$. If ${\rm{char}}\; \kk > 0$,
then ${\rm{char}}\;\kk$ is coprime to $n_{ij}$ for all $i,j$. Since the rank of 
$A$ over $Z$ is a factor of $(\prod_{i<j} n_{ij})^2$, it follows that $r$ and ${\rm{char}}\; 
\kk$ are coprime. By Lemma \ref{zzlem4.3}, $Z$ is CM and by Lemma 
\ref{zzlem4.2}(1), $Z$ is normal.

(2) Let $M$ be an ideal of $A$ generated by a set $\cS$ of monomials.
We can write $M=\cS A$ since each element in $\cS$ is normal in $A$. 
For each $i$, there is a positive integer $w_i$ such that 
$a_i:= x_i^{w_i}$ is in $Z$. Let $U_i$ denote the open 
subset of $X:=\Spec Z$ with $\prod_{j\neq i} a_j\neq 0$.  

Let $U:=\bigcup_{i=1}^n U_i$. First, we claim that $X \setminus U$ has 
codimension $\geq 2$. Note that the subalgebra $R=\kk[a_1,\ldots,a_n]$ 
is a polynomial subring of $Z$ such that $A$ is finitely generated 
over $R$. Since $A$ and $Z$ are CM, both $A$ and $Z$ are finitely 
generated free $R$-modules. As a consequence, $\{a_1,\ldots,a_n\}$ 
is an $R$-regular sequence (and also a $Z$-regular sequence) in $A$ 
and in $Z$. Write $Z$ as a factor ring 
$\kk[a_1,\ldots,a_n,b_1,\ldots,b_w]/I$ for some generators 
$b_1,\ldots, b_w$. If $\fm$ is a closed point which is not in $U$, then 
there are at least two $i_0\neq j_0$ such that $a_{i_0}=0=a_{j_0}$. 
Since  $\{a_1,\ldots,a_n\}$ is a $Z$-regular sequence, 
$\GKdim(Z/(a_{i_0},a_{j_0})) = \GKdim Z-2=n-2$. 
Therefore $X\setminus U$ has dimension at most $n-2$ and we have proved the claim. 

Now for each $f\in \cS$, write $f=x_1^{h_1}\cdots x_n^{h_n}$ and 
define $p_i(f):=h_i$ for each $i$. Let $s_i = \min\{p_i(f) \mid f \in \cS \}$ and define $d = x_1^{s_1}\cdots x_n^{s_n}$. 
It is clear that $d$ is the gcd of the elements in $\cS$ and 
that $M\subseteq dA$. 
We now claim that $M^{\vee\vee}$ is 
the principal ideal of $A$ generated by $d$. Since $A$ is 
$Z$-reflexive [Lemma \ref{zzlem4.2}(3)], $dA$ is reflexive and
contains $M^{\vee\vee}$. By Lemma \ref{zzlem2.2}, it suffices 
to show that $\restr{(M^{\vee\vee})}{U} =\restr{(dA)}{U}$, or 
equivalently, that for each $i$, we have $\restr{(M^{\vee\vee})}{U_i}
=\restr{(dA)}{U_i}$. Over $U_i$, we invert the elements 
$a_j=x^{w_j}$ for all $j\neq i$. Note that 
\[\restr{A}{U_i}=A[a_1^{-1},\ldots,
\widehat{a_{i}^{-1}}, \ldots, a_n^{-1}]=
A[x_1^{-1},\ldots,
\widehat{x_{i}^{-1}}, \ldots, x_n^{-1}]
=\kk_{\bp}[x_{1}^{\pm 1},\ldots,x_{i-1}^{\pm 1},
x_{i},x_{i+1}^{\pm 1},\ldots,x_{n}^{\pm 1}].
\]
Hence
$$\restr{M}{U_i}
=\restr{(\cS A)}{U_i}
=\sum_{f\in \cS} f \restr{A}{U_i}
=\sum_{f\in \cS} x_i^{p_i(f)} \restr{A}{U_i}
=x_i^{s_i} \restr{A}{U_i}
=\restr{(d A)}{U_i}$$
which implies that $\restr{(M^{\vee\vee})}{U_i}=
\restr{(d A)}{U_i}$ as required. This proves the claim. 

(3) Consider $A$ as a $\ZZ^n$-graded algebra. Choose a semi-basis 
of $A$ over $Z$ to consist of $\ZZ^n$-homogeneous elements (namely, 
monomials). By linear algebra, for each $\ZZ^n$-homogeneous 
element $f$, $\tr(f)$ is either 0 or equal to $r^2 f$. The second 
case happens if and only if $f$ is in $Z$. 

(4) This is \cite[Lemma 2.6]{CPWZ2}.

(5, 6) We can choose a generating set of $A$ over $Z$ consisting 
of $\ZZ^n$-homogeneous elements. Both assertions follow from part 
(4).
\end{proof}

\begin{theorem}
\label{zzthm4.6}
Let $A$ be a skew polynomial ring $\kk_\bp[x_1,\ldots,x_n]$ 
where each $p_{ij}$ is a root of unity. Then, for any two 
positive integers $p,v$, $\bar{\csr}^{[p]}_v(A/Z)$ exists.
\end{theorem}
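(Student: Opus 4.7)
The plan is to reduce the existence question to the content of Lemma \ref{zzlem4.5}, which has essentially been packaged to make this theorem a direct consequence. Concretely, we must exhibit a normal element $d\in A$ such that $\overline{\cR}^{p}_v(A/Z)=(\overline{\MD}^{p}_v(A/Z))^{\vee\vee}=dA$, where the reflexive hull is formed relative to $Z$.

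First, I would invoke Lemma \ref{zzlem4.5}(6) to conclude that $\overline{\MD}^{p}_v(A/Z)$ is an ideal of $A$ which admits a generating set consisting of monomials in $Z$. (One has to be a touch careful that the generating monomials for $\MD^{p}_v(A/Z)$ inside $Z$ still generate $\overline{\MD}^{p}_v(A/Z)=\MD^{p}_v(A/Z)A$ as an ideal of $A$, but this is immediate.) Next, I would apply Lemma \ref{zzlem4.5}(2) verbatim to this ideal: that lemma is stated exactly for ideals of $A$ generated by $\ZZ^n$-homogeneous elements, and its conclusion is that the reflexive hull $M^{\vee\vee}$ is principal, generated by the monomial $d=x_1^{s_1}\cdots x_n^{s_n}$ where $s_i$ is the minimum $i$-th exponent appearing among the monomial generators.

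To finish, I would observe that such a monomial $d$ is automatically a normal element of $A$: the relations $x_jx_i=p_{ij}x_ix_j$ show that conjugation by any $x_k$ (and hence by $d$) acts diagonally on monomials, so $dA=Ad$. Thus Definition \ref{zzdef3.1}(5) is satisfied and $\bar{\csr}^{[p]}_v(A/Z)$ exists, equal up to a unit of $A$ to this explicit monomial $d$.

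Since Lemma \ref{zzlem4.5} already does all the combinatorial and sheaf-theoretic heavy lifting (namely, identifying a candidate principal ideal and checking the codimension-two complement condition needed to apply Lemma \ref{zzlem2.2}), there is no real obstacle left in this theorem; the only minor point to verify carefully is that the reflexive hull in Definition \ref{zzdef3.1}(4) is indeed the same object produced by Lemma \ref{zzlem4.5}(2). This follows because $A$ is reflexive over $Z$ by Lemma \ref{zzlem4.2}(3), so reflexive hulls of $A$-ideals computed as $Z$-modules coincide with the hulls used throughout Section \ref{zzsec1}.
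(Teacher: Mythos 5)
Your proof is correct and follows exactly the same route as the paper: invoke Lemma \ref{zzlem4.5}(6) to see that $\overline{\MD}^{p}_v(A/Z)$ is generated by monomials, then apply Lemma \ref{zzlem4.5}(2) to conclude that its reflexive hull is the principal ideal generated by a (normal) monomial. The extra care you take about normality of the monomial and about identifying the $Z$-module reflexive hull with the one in Definition \ref{zzdef3.1} is sound but not a departure from the paper's argument.
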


\begin{proof} 
By Lemma \ref{zzlem4.5}(6), $\overline{\MD}^p_v(A/Z)$ is 
generated by a set of monomials. Now the assertion follows from 
Lemma \ref{zzlem4.5}(2).
\end{proof}

By the proof of Theorem \ref{zzthm4.6}, one can verify that in 
this case, $\bar{\csr}(A/Z)$ agrees with the usual discriminant 
$d_{r^2}(A/Z)$ defined in \cite[Definition 1.2(3)]{CPWZ2}.

In Example \ref{zzex2.6}, we computed the $\mathcal{R}$- and 
$\overline{\mathcal{R}}$-discriminants for several specific 
skew polynomial rings $\kk_{\bp}[x_1, x_2, x_3]$ over their 
centers. We ask the following question.

\begin{question}
\label{zzque4.7}
Let $A$ be a skew polynomial ring $\kk_\bp[x_1,\ldots,x_n]$
where each $p_{ij}$ is a root of unity. When does the 
$\mathcal{R}$-discriminant $\csr(A/Z)$ exist? In terms of 
$\bp$, what are the formulas for $\bar{\csr}(A/Z)$ and 
$\csr(A/Z)$?
\end{question}

A ring $R$ is called a \emph{noetherian unique factorization ring} 
(noetherian UFR) if $R$ is a prime left and right noetherian ring 
such that every nonzero prime ideal of $R$ contains a non-zero 
principal prime ideal. Hence, in a noetherian UFR every height one 
prime ideal is principal. Since the skew polynomial rings in 
Theorem~\ref{zzthm4.6} are noetherian hom-hom UFRs, part (2) of 
the following result is more general than Theorem~\ref{zzthm4.6}, 
but has a less constructive proof.

\begin{theorem}
\label{zzthm4.8}
Assume Hypothesis \ref{zzhyp1.1} for $(A,Z)$. Fix two positive 
integers $p$, $v$.
\begin{enumerate}
\item[(1)]
If $Z$ is a UFD, then the $\cR^p_v$-discriminant $\csr^{[p]}_v(A/Z)$ 
exists.
\item[(2)]
Suppose $A$ is a noetherian hom-hom UFR with affine center $Z$. 
Then the $\overline{\cR}^p_v$-discriminant 
$\bar{\csr}^{[p]}_v(A/Z)$ always exists.
\end{enumerate}
\end{theorem}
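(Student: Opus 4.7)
Part (1) is immediate: since $Z$ is a UFD, it is a noetherian normal domain with trivial divisor class group, so every nonzero reflexive ideal of $Z$ is principal. The ideal $\cR^p_v(A/Z) = (\MD^p_v(A/Z))^{\vee\vee}$ is reflexive over $Z$ by construction, so any generator provides $\csr^{[p]}_v(A/Z)$.

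For part (2), the plan is to mimic the above argument using the noncommutative UFR structure of $A$ in place of the UFD structure of $Z$. Since $A$ is hom-hom with affine center $Z$, Lemma~\ref{zzlem4.2} gives that $Z$ is a noetherian normal domain and that $A$ is reflexive over $Z$. For any two-sided ideal $I \subseteq A$, the $Z$-reflexive hull $I^{\vee\vee}$ inherits commuting left and right $A$-actions by functoriality of $\Hom_Z(-,Z)$, and the embedding $I^{\vee\vee} \hookrightarrow A^{\vee\vee} = A$ shows it remains a two-sided ideal of $A$. Thus $J := \overline{\cR}^p_v(A/Z)$ is a $Z$-reflexive two-sided ideal of $A$. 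I now invoke the structure theory of noetherian UFRs: every height one prime of $A$ is principal, generated by a normal element. Since $A$ is finite over the normal domain $Z$, height one primes of $A$ correspond to height one primes of $Z$. Localizing at a height one prime $P$ of $Z$, the ideal $JA_P$ takes the form $d_P^{e_P} A_P$ where $d_P$ generates the height one prime of $A$ over $P$, with only finitely many $e_P$ nonzero. I then propose to take $d := \prod_P d_P^{e_P}$, which is a normal element of $A$, and verify that $J = dA$.

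The main obstacle is this last verification: that the $Z$-reflexive hull $J$ is recovered by its local divisor data, i.e., $J = \bigcap_P J A_P$ where $P$ ranges over height one primes of $Z$, and that this intersection equals $dA$. The first equality follows from the standard Krull-domain identity $M = \bigcap_{\mathrm{ht}(P)=1} M_P$ for reflexive $Z$-modules, applied to $J$ as a $Z$-module. The second is the noncommutative divisor-theoretic statement that the monoid of $Z$-divisorial two-sided $A$-ideals, under the star-product $I\star K := (IK)^{\vee\vee}$, is freely generated by the principal height one primes $(d_P)$; this follows from the Chatters--Jordan style divisor theory available for noetherian UFRs. Combining these identifications gives $J = dA$, so $\bar{\csr}^{[p]}_v(A/Z)$ exists.
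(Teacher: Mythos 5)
Your part (1) coincides with the paper's: both rest on the fact that a reflexive ideal of a UFD is principal. For part (2), however, you take a genuinely different route. The paper's proof is a short citation chain: since $A$ is hom-hom, Lemma \ref{zzlem4.2}(3) makes $A$ a tame $Z$-order and a reflexive $Z$-module; by \cite[Lemma 2.1]{SVdB} every $Z$-reflexive $A$-module is then $A$-reflexive; and \cite[Lemma 3.3(ii)]{akalan} says that every reflexive ideal of such a noetherian UFR is principal. You instead reconstruct this last fact by hand, writing the $Z$-reflexive ideal $J$ as $\bigcap_P J_P$ over height one primes $P$ of $Z$ and matching it against a product of the normal generators of the height one primes of $A$. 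This is essentially a re-derivation of Akalan's lemma via divisor theory, and it has the virtue of making visible \emph{why} the result holds; the paper's version buys brevity at the cost of opacity.

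Two steps in your sketch deserve more care, because they are exactly the content that the paper's citations supply. First, the ``correspondence'' between height one primes of $A$ and of $Z$ is not a bijection: several height one primes $Q_1,\dots,Q_r$ of $A$ may contract to the same $P$, so $J_P$ should be $\prod_i d_{Q_i}^{e_{Q_i}} A_P$ rather than a power of a single $d_P$ (this is harmless for the conclusion, since a product of regular normal elements is normal, but the bookkeeping must be done over height one primes of $A$, not of $Z$). Second, the assertion that the $Z_P$-reflexive two-sided ideals of $A_P$ are generated by the (principal) height one primes is not a formal consequence of the UFR property alone; it uses that $A_P$ is a nice order over the DVR $Z_P$ --- precisely the tameness/maximal-order property that hom-hom provides via \cite{SVdB}. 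You have this hypothesis available but do not invoke it where it is needed; without it, the ``Chatters--Jordan style'' free generation of divisorial two-sided ideals by height one primes can fail. With those two points made explicit, your argument goes through and is a legitimate alternative to the paper's proof.
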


\begin{proof}
(1) This follows immediately since every reflexive ideal of 
a UFD is principal.

(2) Since $A$ is hom-hom, by Lemma~\ref{zzlem4.2}(3), $A$ is a 
tame $Z$-order and a reflexive $Z$-module. Thus, by \cite[Lemma 2.1]{SVdB}, every reflexive 
$Z$-module is reflexive as an $A$-module 
(see also \cite[Corollary 1.6]{silver}). By 
\cite[Lemma 3.3(ii)]{akalan}, every reflexive ideal of $A$ is 
principal. It follows that the $\overline{\cR}$-discriminant of 
$A$ over its center exists.
\end{proof}

Motivated by the above results, we ask the following questions.

\begin{question}
\label{zzque4.9}
Let $A$ be a hom-hom ring with center $Z$. Under what conditions 
does $\bar{\csr}(A/Z)$ (resp. $\csr(A/Z)$) exist? For example, if 
$A$ is Calabi--Yau, does $\bar{\csr}(A/Z)$ (resp. $\csr(A/Z)$) 
exist?
\end{question}

\section{Examples}
\label{zzsec5}

Much of the recent literature on discriminants (e.g., 
\cite{CPWZ1, CPWZ2, CYZ2, CYZ1}) has focused on connected graded 
hom-hom rings (namely, noetherian connected graded Artin--Schelter 
regular algebras satisfying a polynomial identity). In this paper 
we will test discriminant theory for a class of noncommutative 
algebras, the quantum generalized Weyl algebras (quantum GWAs), which 
are generally neither connected graded nor of finite global 
dimension. 

We begin by introducing GWAs in full generality and recalling some 
well-known facts regarding GWAs. We then proceed to work out the 
$\cR$-discriminant for quantum GWAs. 

\subsection{Properties of quantum GWAs}
\label{zzsec5.1}

The following definition is motivated by Bavula \cite{B1}.

\begin{definition}
\label{zzdef5.1}
Let $R$ be a $\kk$-algebra, 
$\sigma=(\sigma_1,\hdots,\sigma_\GWAdeg)$ a set of $\GWAdeg$ 
commuting automorphisms of $R$, and $h=(h_1,\hdots,h_\GWAdeg)$ 
a set of $\GWAdeg$ central regular elements in $R$ such that 
$\sigma_i(h_j)=h_j$ for $i,j \in \{1,\hdots,\GWAdeg\}$, 
$i \neq j$. With this data, we define the \emph{GWA of degree 
$\GWAdeg$} as the $\kk$-algebra  generated over $R$ as an algebra 
by $\bx=(x_1,\hdots,x_\GWAdeg)$ and $\by=(y_1,\hdots,y_\GWAdeg)$ 
with relations 
\begin{align*}
&x_ir = \sigma_i(r)x_i  &y_ir = \sigma_i\inv(r)y_i  
&	&\text{ for all $i \in \{1,\hdots, \GWAdeg\}$, $r \in R$} \\
&x_iy_i = h_i	&y_ix_i = \sigma_i\inv(h_i)	
&	&\text{ for all $i \in \{1,\hdots, \GWAdeg\}$} \\
&[x_i,x_j]=[y_i,y_j]=[x_i,y_j]=0	& 
&	&\text{ for all distinct $i,j \in \{1,\hdots, \GWAdeg\}$}.
\end{align*}
We denote this algebra by $R(\bx,\by,\sigma,h)$. We say 
$R(\bx,\by,\sigma,h)$ is a \emph{quantum GWA} if $R=\kk[t]$ and 
there exist nonzero scalars 
$q = (q_1,\hdots,q_\GWAdeg) \in (\kk\backslash\{0,1\})^\GWAdeg$ 
such that $\sigma_i(t) = q_i t$ for all $i \in \{1,\hdots,\GWAdeg\}$. 
If $q_i$ has finite order in the multiplicative group $\kk^\times$, 
then we let $\ord(q_i)$ denote the order of  $q_i$.
\end{definition}

In general, the base ring $R$ in  Definition~\ref{zzdef5.1} need 
not be commutative but in this paper, $R$ is always taken to be a 
commutative polynomial ring. We also only consider the case where 
each $q_i$ is a root of unity. More specifically, we will only 
consider the case in which the $\ord(q_i)$ are pairwise 
relatively prime.

If $W$ is a GWA of degree $\GWAdeg$, then for each $i = 1, \dots, m$ the subalgebra 
$W_i$ generated by $x_i$ and $y_i$ is a degree one GWA isomorphic to $R(x_i,y_i,\sigma_i,h_i)$ [Definition \ref{zzdef0.5}]. 
More generally, let $I \subseteq \{1,\hdots,\GWAdeg\}$. 
Set $\bx_I = (x_i)_{i \in I}$, $\by_I = (y_i)_{i \in I}$, 
$\sigma_I = (\sigma_i)_{i \in I}$, and $h_I = (h_i)_{i \in I}$. 
Then $\{\bx_I,\by_I\}$ generate a degree $|I|$ GWA over 
$R$ that we denote by $R(\bx_I, \by_I, \sigma_I,h_I)$. 

The class of GWAs is also closed under tensor products over $\kk$. 
If $W=R(\bx,\by,\sigma,h)$ and $W'=R'(\bx',\by',\sigma',h')$ are 
GWAs of degree $k$ and $\ell$, respectively, then 
\[W \tensor W' = (R \tensor R')((\bx,\bx'),(\by,\by'), 
(\sigma, \sigma'), (h, h') )\]
is a GWA of degree $k + \ell$. The following facts are well-known.

\begin{lemma}
\label{zzlem5.2}
Let $R$ be a $\kk$-algebra and suppose $W=R(\bx,\by, \sigma,h)$ 
is a degree $\GWAdeg$ GWA over $R$.
\begin{enumerate}
\item[(1)] 
If $R$ is a left {\rm{(}}right{\rm{)}} noetherian ring, then so 
is $W$.
\item[(2)] 
If $R$ is a domain, then so is $W$.
\item[(3)] 
There is a $\ZZ^{\GWAdeg}$-grading on $W$ obtained by setting 
$\deg(r)=\bzero$ for all $r \in R$ and $\deg(x_i)=\be_i$, 
$\deg(y_i)=-\be_i$ for all $i$.
\end{enumerate}
\end{lemma}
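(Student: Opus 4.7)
The plan is to prove the three parts in the order (3), (2), (1), since the $\ZZ^{\GWAdeg}$-grading from (3) is the main tool for parts (2) and (1).

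For (3), I would verify directly that each defining relation of $W$ is homogeneous with respect to the proposed grading. The relation $x_ir = \sigma_i(r)x_i$ equates two terms of degree $\be_i$ (since $\sigma_i(r)\in R$ has degree $\bzero$), and similarly $y_ir = \sigma_i^{-1}(r)y_i$ equates terms of degree $-\be_i$. The relations $x_iy_i = h_i$ and $y_ix_i = \sigma_i^{-1}(h_i)$ equate terms of degree $\bzero$. Finally, the commutation relations $[x_i,x_j]=[y_i,y_j]=[x_i,y_j]=0$ (for distinct $i,j$) involve only terms of a single degree $\pm\be_i\pm\be_j$. Hence the ideal of defining relations is $\ZZ^{\GWAdeg}$-homogeneous and the grading descends to $W$.

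For (2), I would use the iterated GWA decomposition mentioned before the lemma: because the $\sigma_i$ commute and each $\sigma_i$ fixes $h_j$ for $i\neq j$, we may write $W$ as a tower of degree-one GWAs
\[
W = R(x_1,y_1,\sigma_1,h_1)(x_2,y_2,\sigma_2,h_2)\cdots(x_{\GWAdeg},y_{\GWAdeg},\sigma_{\GWAdeg},h_{\GWAdeg}),
\]
where each successive base ring is a (possibly non-commutative) GWA, which the definition permits. By induction on $\GWAdeg$, it suffices to prove the degree-one case. Here the $\ZZ$-grading from (3) gives $W_n = Rx^n$ for $n\ge 0$ and $W_n = Ry^{-n}$ for $n<0$, each a free left $R$-module of rank one. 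Given nonzero $a,b\in W$, let $a_n, b_m$ denote their top-degree homogeneous components. Because the sum $i+j = n+m$ with $i\le n, j\le m$ forces $i=n, j=m$, the degree $n+m$ component of $ab$ equals $a_nb_m$. A direct computation using $x^iy^j = (\text{product of } \sigma\text{-shifts of }h)\cdot x^{i-j}$ (or analogously for $i<j$) shows this product is nonzero, since $R$ is a domain and $h$ is a regular central element.

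For (1), the same iterated reduction works: if the degree-one GWA over a (possibly non-commutative) noetherian base ring is noetherian, then by induction so is every higher-degree GWA over $R$. The degree-one case is a classical result of Bavula \cite{B1}: using the symmetric filtration $F_nW = \bigoplus_{|k|\le n} W_k$ coming from the $\ZZ$-grading in (3), one shows that the associated graded ring is a quotient of an iterated skew polynomial ring (essentially the ``degenerate'' GWA with $h=0$), which is noetherian. Standard results on filtered rings then lift noetherianness to $W$.

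The main obstacle is (1): the grading arguments that cleanly settle (2) and (3) are not directly sufficient for (1), which requires the more substantive filtered/graded lifting argument. Since the lemma asserts the results are well-known, however, citing \cite{B1} (and e.g.\ \cite{Jkrull}) in place of reproducing the filtration argument is appropriate.
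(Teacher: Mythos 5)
The paper offers no proof of this lemma at all; it is introduced with ``The following facts are well-known'' and left as a citation-free assertion, so there is no in-paper argument to compare against. Your proof is correct. Part (3) is exactly the routine homogeneity check one would expect. For part (2), the reduction to the degree-one case via the iterated presentation $W(k+1)\cong W(k)(x_{k+1},y_{k+1},\sigma_{k+1},h_{k+1})$ is legitimate (Definition \ref{zzdef5.1} permits noncommutative base rings, the extension of $\sigma_{k+1}$ to $W(k)$ respects the relations because the $\sigma_i$ commute and fix the other $h_j$, and $h_{k+1}$ stays central and regular in the domain $W(k)$), and the paper itself uses this same iterated presentation in the proof of Lemma \ref{zzlem5.4}; your top-degree-component argument for the degree-one case is the standard one and works because each graded piece $W_n$ is free of rank one over $R$. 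For part (1), your filtered/graded route via the symmetric filtration is the classical Bavula argument and is fine, though it is worth noting that the paper's own Lemma \ref{zzlem5.6} already exhibits $W$ as $T/(z_1,\dots,z_m)$ for an iterated Ore extension $T$ of $\kk[t]$ with the $z_i=x_iy_i-h_i$ central; the same presentation over a general noetherian $R$ gives noetherianness of $W$ immediately (iterated Ore extensions of noetherian rings are noetherian, as are their quotients), avoiding the need to identify the associated graded ring. Either way, deferring to \cite{B1} for (1) is appropriate given that the paper itself treats the lemma as folklore.
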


Suppose $W=R(\bx, \by, \sigma,h)$ is a GWA of degree $\GWAdeg$. 
For $\gamma =(\gamma_1,\ldots, \gamma_\GWAdeg)\in \ZZ^\GWAdeg$, 
let $z^\gamma$ denote $z_1^{\gamma_1}\ldots z_\GWAdeg^{\gamma_\GWAdeg}$ 
formally where $z_i^{\gamma_i} = x_i^{\gamma_i}$ when $\gamma_i \geq 0$ 
and $z_i^{\gamma_i} = y_i^{-\gamma_i}$ when  $\gamma_i < 0$. By 
a result of Benkart and Ondrus \cite[Proposition 2.5]{BO}, the 
center $Z(W)$ of $W$ is generated by 
$R^{\grp{\sigma}} = \{r \in R \mid \sigma_i(r) 
= r \text{ for all } i = 1,\hdots,\GWAdeg\}$ and those monomials 
$z^\gamma$ such that $\sigma^\gamma := 
\prod_{i=0}^{\GWAdeg} \sigma_i^{\gamma_i} = \Id_R$.
Note that this generalizes a result of Kulkarni 
\cite[Corollary 2.0.2]{kulk} in the case of a degree one GWA over 
a commutative domain.

In subsection \ref{zzsec5.3} we will use the following.

\begin{corollary}
\label{zzcor5.3}
Let $R$ a commutative domain and let $W=R(\bx, \by, \sigma,h)$ 
be a GWA of degree $\GWAdeg$ with $\GWAord_i = |\sigma_i|<\infty$ 
for all $i$. Suppose $\gcd(\GWAord_i,\GWAord_j)=1$ for all 
$i \neq j$. Then $Z(W)$ is generated over $R^{\grp{\sigma}}$ 
by $x_1^{\GWAord_1}, y_1^{\GWAord_1}, \hdots, 
x_\GWAdeg^{\GWAord_\GWAdeg}, y_\GWAdeg^{\GWAord_\GWAdeg}$.
\end{corollary}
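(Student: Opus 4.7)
The plan is to combine the Benkart--Ondrus description of $Z(W)$ (stated just before the corollary) with the observation that commuting automorphisms of pairwise coprime orders generate an internal direct product of cyclic groups. First I would recall that, by \cite[Proposition 2.5]{BO}, $Z(W)$ is generated as an algebra over $R^{\grp{\sigma}}$ by $R^{\grp{\sigma}}$ together with the monomials $z^\gamma$, $\gamma\in\ZZ^\GWAdeg$, for which $\sigma^\gamma=\prod_i \sigma_i^{\gamma_i}=\Id_R$. So the task reduces to pinning down exactly which $\gamma$ satisfy this condition and checking that the resulting monomials sit in the subalgebra generated by the $x_i^{\GWAord_i}$ and $y_i^{\GWAord_i}$.

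Next I would verify that, under the pairwise coprimality of the $\GWAord_i=|\sigma_i|$, the subgroup $\grp{\sigma_1,\ldots,\sigma_\GWAdeg}\subseteq \Aut(R)$ is an internal direct product of the cyclic groups $\grp{\sigma_i}$. Indeed, for $i\neq j$ any element of $\grp{\sigma_i}\cap\grp{\sigma_j}$ has order dividing both $\GWAord_i$ and $\GWAord_j$, hence is trivial; since the $\sigma_i$ commute, a standard induction then gives $\grp{\sigma_1,\ldots,\sigma_\GWAdeg}\cong \prod_{i=1}^\GWAdeg \ZZ/\GWAord_i\ZZ$ via $\sigma_i\mapsto e_i$. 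Under this isomorphism the relation $\sigma^\gamma=\Id_R$ decouples into $\sigma_i^{\gamma_i}=\Id_R$ for each $i$, i.e. $\GWAord_i\mid \gamma_i$ for every $i$.

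Finally, writing $\gamma_i=k_i\GWAord_i$ with $k_i\in\ZZ$, the monomial $z^\gamma$ factors as $\prod_{i=1}^\GWAdeg z_i^{k_i\GWAord_i}$, where each factor equals $(x_i^{\GWAord_i})^{k_i}$ when $k_i\geq 0$ and $(y_i^{\GWAord_i})^{-k_i}$ when $k_i<0$. Hence every such $z^\gamma$ lies in the subalgebra generated over $R^{\grp{\sigma}}$ by $x_1^{\GWAord_1},y_1^{\GWAord_1},\ldots,x_\GWAdeg^{\GWAord_\GWAdeg},y_\GWAdeg^{\GWAord_\GWAdeg}$; taking $\gamma=\pm \GWAord_i e_i$ in Benkart--Ondrus also shows that each $x_i^{\GWAord_i}$ and $y_i^{\GWAord_i}$ is itself central, so this subalgebra is contained in $Z(W)$. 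Combining both inclusions gives the desired description of $Z(W)$.

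There is no real obstacle here; the argument is essentially bookkeeping once \cite[Proposition 2.5]{BO} is in hand. The only point that requires a moment's thought is the direct-product decomposition of $\grp{\sigma}$, but that is an entirely standard consequence of pairwise coprimality of the $\GWAord_i$ together with commutativity of the $\sigma_i$.
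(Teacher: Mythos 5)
Your proof is correct and follows the same route as the paper: the paper's own proof simply observes that $\grp{\sigma_1,\ldots,\sigma_\GWAdeg}$ is a finite abelian group of order $\prod_i \GWAord_i$ (i.e.\ the internal direct product you describe) and then invokes \cite[Proposition 2.5]{BO}. You have merely spelled out the bookkeeping (that $\sigma^\gamma=\Id_R$ forces $\GWAord_i\mid\gamma_i$ and that the corresponding $z^\gamma$ are products of the $x_i^{\GWAord_i}$ and $y_i^{\GWAord_i}$) which the paper leaves implicit.
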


\begin{proof}
By the hypotheses on the $\GWAord_i$, the subgroup of $\Aut(R)$ 
generated by the $\sigma_i$ is a finite abelian group of order 
$\GWAord=\prod_{i=1}^\GWAdeg  n_i$. The result now follows 
directly from \cite[Proposition 2.5]{BO}.
\end{proof}

An automorphism $\sigma$ of a $\kk$-algebra $R$ is called \emph{locally 
algebraic} if every finite-dimensional subspace of $R$ is
contained in a finite-dimensional $\sigma$-stable subspace of $R$. 
It is clear that if $\sigma$ has finite order, then $\sigma$ is 
locally algebraic.

The following result uses \cite{ebrahim} but it may also be 
possible to apply \cite{ZMZ} to the same effect.

\begin{lemma}
\label{zzlem5.4}
Let $R$ be an affine $\kk$-algebra and $W:=R(\bx, \by,\sigma,h)$ 
a degree $\GWAdeg$ GWA over $R$. If $|\sigma_i|<\infty$ for all 
$i$, then $\GKdim W = \GKdim R + \GWAdeg$.
\end{lemma}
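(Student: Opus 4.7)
My plan is to establish $\GKdim W = \GKdim R + m$ by deriving matching upper and lower bounds. Set $n_i := |\sigma_i|$ and $u_i := x_i^{n_i}$ for each $i = 1, \ldots, m$. For the lower bound, I would first verify that each $u_i$ lies in $Z(W)$: commutativity with $R$ is immediate from $\sigma_i^{n_i} = \Id_R$, commutativity with $x_j$ and $y_j$ for $j \ne i$ is built into Definition~\ref{zzdef5.1}, and the nontrivial commutation with $y_i$ follows from
$$u_iy_i \;=\; x_i^{n_i-1}(x_iy_i) \;=\; x_i^{n_i-1}h_i \;=\; \sigma_i^{n_i-1}(h_i)\,x_i^{n_i-1} \;=\; \sigma_i^{-1}(h_i)\,x_i^{n_i-1} \;=\; (y_ix_i)\,x_i^{n_i-1} \;=\; y_iu_i,$$
using $\sigma_i^{n_i-1} = \sigma_i^{-1}$. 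By Lemma~\ref{zzlem5.2}(3), $W$ is $\ZZ^m$-graded with $\deg u_i = n_i e_i$, so the monomials $\prod_i u_i^{a_i}$ (for $a \in \NN^m$) lie in pairwise distinct graded components. Combined with the standard PBW fact that $W$ is a free left $R$-module on $\{z^\gamma\}_{\gamma \in \ZZ^m}$, this forces $u_1, \ldots, u_m$ to be algebraically independent over $R$. Hence $R[u_1, \ldots, u_m] \subseteq W$ is a polynomial ring in $m$ variables over $R$ of GK dimension $\GKdim R + m$, which gives $\GKdim W \ge \GKdim R + m$.

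For the upper bound, I would choose a finite-dimensional generating subspace $V_R \subseteq R$ containing $\{h_1, \ldots, h_m\}$ and replace it by its $\langle \sigma_1, \ldots, \sigma_m \rangle$-invariant envelope $\widetilde V_R := \sum_{0 \le a_i < n_i} \sigma_1^{a_1} \cdots \sigma_m^{a_m}(V_R)$, which is finite-dimensional and stable under each $\sigma_i^{\pm 1}$ because $\sigma_i$ has order $n_i$. Let $V := \kk \cdot 1 + \widetilde V_R + \sum_i (\kk x_i + \kk y_i)$, a finite-dimensional generating subspace of $W$. Using the defining relations to move $\widetilde V_R$-elements to the left (applying $\sigma_i^{\pm 1}$, which preserves $\widetilde V_R$) and to reduce any subword $x_iy_i$ or $y_ix_i$ to $h_i$ or $\sigma_i^{-1}(h_i) \in \widetilde V_R$, every element of $V^n$ can be written as an $R$-linear combination of terms $r \cdot z^\gamma$ with $r \in \widetilde V_R^{\,n}$ and $\sum_i |\gamma_i| \le n$. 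A lattice-point count gives $|\{\gamma \in \ZZ^m : \sum_i |\gamma_i| \le n\}| = O(n^m)$ and, by definition of GK dimension, $\dim \widetilde V_R^{\,n} = O(n^{\GKdim R})$, so $\dim V^n = O(n^{\GKdim R + m})$, yielding $\GKdim W \le \GKdim R + m$.

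The main obstacle is the combinatorial bookkeeping in the upper bound: each commutation of an element $r \in \widetilde V_R$ past an $x_i$ or $y_i$ replaces $r$ by $\sigma_i^{\pm 1}(r)$, and each reduction of $x_iy_i$ (or $y_ix_i$) introduces a factor $h_i$ (or $\sigma_i^{-1}(h_i)$). One must verify that all of these resulting coefficients remain inside the fixed finite-dimensional subspace $\widetilde V_R$ so that the final growth estimate goes through. The finite-order hypothesis on the $\sigma_i$ is used precisely to ensure $\widetilde V_R$ is finite-dimensional; with this in hand, the two bounds coincide and the lemma follows.
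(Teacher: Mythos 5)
Your proof is correct, but it takes a genuinely different route from the paper. The paper argues by induction on the degree: it writes $W(k+1)\cong W(k)(x_{k+1},y_{k+1},\sigma_{k+1},h_{k+1})$ as a degree one GWA over the degree $k$ GWA $W(k)$, notes that the extended $\sigma_{k+1}$ still has finite order and is therefore locally algebraic, and invokes a cited theorem of Ebrahim asserting that a degree one GWA over an affine base with locally algebraic automorphism has GK dimension one more than the base. You instead prove both inequalities directly: the lower bound by exhibiting the central elements $u_i=x_i^{n_i}$ and using the $\ZZ^m$-grading together with the freeness of $W$ over $R$ on the $z^\gamma$ to see that $R[u_1,\ldots,u_m]$ is a genuine polynomial ring, and the upper bound by a growth count with a $\sigma$-stable finite-dimensional generating subspace. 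Your argument is self-contained and makes transparent exactly where finiteness of $|\sigma_i|$ enters (centrality of $x_i^{n_i}$, and finite-dimensionality of the invariant envelope $\widetilde V_R$); the paper's argument is shorter modulo the citation and its reliance only on local algebraicity of $\sigma$ is what motivates the more general Conjecture \ref{zzcon5.5}, which your lower-bound trick would not reach since $x_i^{n_i}$ is central only for finite order. Two cosmetic points: $\dim \widetilde V_R^{\,n}=O(n^{\GKdim R})$ is not literally guaranteed (GK dimension is a $\limsup$), but the conclusion survives because $\limsup_n \log_n\bigl(\dim\widetilde V_R^{\,n}\cdot (2n+1)^m\bigr)=\GKdim R+m$; and your bookkeeping that the reduced coefficient lies in $\widetilde V_R^{\,n}$ checks out, since each cancellation $x_iy_i\mapsto h_i$ consumes two letters while producing one new factor from $\widetilde V_R$.
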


\begin{proof}
Let $W(1)=R(x_1,y_1,\sigma_1,h_1)$. Since $|\sigma_1|<\infty$,
then $\sigma_1$ is locally algebraic. Hence, 
\cite[Theorem 26]{ebrahim} gives $\GKdim W(1) = \GKdim R + 1$. 

Let $k \geq 1$, $I=\{1,\hdots,k\}$, and 
$W(k)=R(\bx_I,\by_I,\sigma_I,h_I)$. Assume that $\GKdim W(k) 
= \GKdim R + k$. Recall that 
$W(k+1) \iso W(k)(x_{k+1},y_{k+1},\sigma_{k+1},h_{k+1})$
where $\sigma_{k+1}$ extends to $W(k)$ by setting
$\sigma_{k+1}(x_i)=x_i$ and $\sigma_{k+1}(y_i)=y_i$ for 
all $i<k+1$. Consequently, $|\sigma_{k+1}|$ has finite 
order as an automorphism of $W(k)$ and so $\sigma_{k+1}$ is 
locally algebraic on $W(k)$. Applying \cite[Theorem 26]{ebrahim} 
again gives
\[ \GKdim W(k+1) = \GKdim W(k) + 1 = \GKdim R + (k+1).\]
Now the assertion follows by induction.
\end{proof}

\begin{conjecture}
\label{zzcon5.5}
The conclusion of the previous lemma holds whenever $R$ is a 
finitely generated module over $R^G$ where 
$G=\grp{\sigma_1,\hdots,\sigma_\GWAdeg}$.
\end{conjecture}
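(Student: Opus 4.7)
The plan is to mirror the inductive strategy used in Lemma \ref{zzlem5.4}, replacing the finite-order hypothesis with the weaker module-finiteness hypothesis. The heart of the argument is to establish that when $R$ is module-finite over $R^G$, each $\sigma_i$ is locally algebraic on $R$; once this is in hand, an inductive application of the Ebrahim result \cite[Theorem 26]{ebrahim} proceeds essentially as in Lemma \ref{zzlem5.4}.

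First, I would show that each $\sigma_i$ is locally algebraic on $R$. Module-finiteness of $R$ over $R^G$ implies that every $r\in R$ satisfies a monic polynomial $p_r(t)\in R^G[t]$, and for any $g\in G$, one has $p_r(g(r))=g(p_r(r))=0$ since $g$ fixes $R^G$. Assuming $R$ is a commutative domain (which is the case in all applications considered in this paper), $p_r(t)$ has at most $\deg(p_r)$ roots in $R$, so the $G$-orbit, and hence the $\grp{\sigma_i}$-orbit, of $r$ is finite. Taking the $\kk$-span of the union of such orbits for a basis of a finite-dimensional subspace $V\subseteq R$ yields a finite-dimensional $\sigma_i$-stable subspace containing $V$.

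With local algebraicity on $R$ available, I would induct on $\GWAdeg$. For $k=1$, \cite[Theorem 26]{ebrahim} gives $\GKdim R(x_1,y_1,\sigma_1,h_1) = \GKdim R + 1$. For the inductive step, let $W(k):=R(\bx_I,\by_I,\sigma_I,h_I)$ with $I=\{1,\ldots,k\}$, so that $W(\GWAdeg) = W$. The automorphism $\sigma_{k+1}$ extends uniquely to an automorphism of $W(k)$ fixing $x_i$ and $y_i$ for all $i\leq k$; I must verify that this extension remains locally algebraic. Since $W(k)$ is a free left $R$-module with basis given by standard monomials $m_\alpha$ in the $x_i$'s and $y_i$'s (each of which is $\sigma_{k+1}$-fixed), any finite-dimensional $\kk$-subspace $V\subseteq W(k)$ involves only finitely many monomials and left $R$-coefficients spanning a finite-dimensional subspace $V_R\subseteq R$. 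Extending $V_R$ to a $\sigma_{k+1}$-stable finite-dimensional $W_R\subseteq R$ via the first step, the $\kk$-span of $W_R$ times the relevant monomials is finite-dimensional, $\sigma_{k+1}$-stable, and contains $V$. Applying \cite[Theorem 26]{ebrahim} once more then yields $\GKdim W(k+1) = \GKdim W(k) + 1 = \GKdim R + (k+1)$, completing the induction.

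The main obstacle is extending this argument beyond the domain hypothesis on $R$. When $R$ fails to be a commutative domain, Cayley--Hamilton applied to $\sigma_i\in \End_{R^G}(R)$ still produces a monic relation $\sigma_i^n+c_{n-1}\sigma_i^{n-1}+\cdots +c_0=0$ with $c_j\in R^G$, but the coefficients $c_j$ can have infinite $\kk$-span, so local algebraicity over $\kk$ no longer follows automatically from integrality. Resolving this (or exhibiting a counterexample) seems necessary to settle the conjecture in full generality, and it is likely the key point separating Conjecture \ref{zzcon5.5} from the cleaner finite-order setting of Lemma \ref{zzlem5.4}.
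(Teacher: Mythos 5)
This statement is posed in the paper as a conjecture, with no proof given, so the only question is whether your argument actually settles it. It does not, and you have correctly located the failure point yourself: your proof that each $\sigma_i$ is locally algebraic requires $R$ to be a commutative domain, so that the monic integral relation $p_r$ has only finitely many roots and every $G$-orbit is finite. The conjecture, however, is posed for the affine $\kk$-algebras $R$ of Lemma \ref{zzlem5.4}, which need not be commutative or reduced (the paper explicitly notes after Definition \ref{zzdef5.1} that the base ring of a GWA need not be commutative). Your inductive step --- extending $\sigma_{k+1}$ to $W(k)$ and verifying local algebraicity using the free left $R$-basis of standard monomials, each fixed by $\sigma_{k+1}$ --- is sound, but it sits downstream of the unproved first step.

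More seriously, in the one case you do handle, the conjecture has no content beyond Lemma \ref{zzlem5.4} itself. If $R$ is a commutative affine domain that is module-finite over $R^G$, then $\operatorname{Frac}(R)$ is a finite extension of $\operatorname{Frac}(R^G)$, and $G$ acts faithfully on $\operatorname{Frac}(R)$ fixing $\operatorname{Frac}(R^G)$ pointwise; by Dedekind's independence of characters such a group is finite, so every $\sigma_i$ already has finite order and Lemma \ref{zzlem5.4} applies verbatim. (Your own orbit computation shows the same thing even more directly: each of the finitely many algebra generators of the affine algebra $R$ has a finite $\grp{\sigma_i}$-orbit, and an automorphism whose power fixes a generating set is of finite order.) The genuine content of Conjecture \ref{zzcon5.5} therefore lies precisely in the noncommutative and non-domain cases that your argument excludes, and the conjecture remains open.
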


The following is an extended version of \cite[Proposition 3.2]{KK}. 
We refer the reader there for the definition of 
Auslander--Gorenstein.

\begin{lemma}
\label{zzlem5.6}
Let $W=\kk[t](\bx,\by, \sigma,h)$ be a GWA of degree $\GWAdeg$.
Then $W$ is Auslander--Gorenstein and GK--Macaulay.
\end{lemma}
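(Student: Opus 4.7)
The plan is to proceed by induction on the degree $\GWAdeg$. The base case $\GWAdeg = 1$ is precisely \cite[Proposition 3.2]{KK}. For the inductive step, I would use the iterative GWA construction exploited in the proof of Lemma \ref{zzlem5.4} to write $W = W'(x_\GWAdeg, y_\GWAdeg, \sigma_\GWAdeg, h_\GWAdeg)$ as a degree one GWA over the degree $\GWAdeg - 1$ GWA $W' := \kk[t](\bx_I, \by_I, \sigma_I, h_I)$ with $I = \{1, \ldots, \GWAdeg - 1\}$. Here $\sigma_\GWAdeg$ extends to an automorphism of $W'$ by fixing the generators $x_i, y_i$ for $i < \GWAdeg$, and therefore still has finite order (hence is locally algebraic) on $W'$. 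By the inductive hypothesis, $W'$ is Auslander--Gorenstein and GK--Macaulay.

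To close the induction, I would prove the following extension of \cite[Proposition 3.2]{KK}: if $S$ is a noetherian Auslander--Gorenstein GK--Macaulay $\kk$-algebra, $\sigma$ is a locally algebraic automorphism of $S$, and $h \in S$ is a central regular element, then the degree one GWA $T := S(x, y, \sigma, h)$ is also Auslander--Gorenstein and GK--Macaulay. Mimicking the strategy of \cite{KK}, I would equip $T$ with the filtration $F^n T := \Span_S \{x^i, y^j : 0 \leq i, j \leq n\}$. In the associated graded $\mathrm{gr}_F T$, the GWA relations $xy = h$ and $yx = \sigma^{-1}(h)$ degenerate (since $h \in F^0$ but $xy, yx \in F^2$) to $xy = yx = 0$, so $\mathrm{gr}_F T$ is the amalgamation of the Ore extensions $S[x;\sigma]$ and $S[y;\sigma^{-1}]$ along $S$ with mixed products vanishing. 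Once one verifies that $\mathrm{gr}_F T$ is Auslander--Gorenstein and GK--Macaulay (for instance by combining the standard Ore extension theorem with a direct $\ZZ$-graded analysis of $\mathrm{gr}_F T$, whose graded components are free rank-one $S$-modules), the standard filtered-ring lifting results --- Bj\"ork's theorems for Auslander--Gorenstein and those in \cite{BM} for GK--Macaulay --- transfer these properties to $T$ itself.

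The hardest step will be verifying that $\mathrm{gr}_F T$ really is Auslander--Gorenstein and GK--Macaulay when $S$ is noncommutative: since $\mathrm{gr}_F T$ is not itself an Ore extension of $S$ but rather the more subtle amalgamation described above, it is not covered directly by the classical Ore-extension inheritance results used in \cite{KK}. I expect to handle this either by presenting $\mathrm{gr}_F T$ as a quotient of an iterated Ore extension of $S$ by a normal regular sequence (so that the homological properties descend from the Ore extension via standard factor-ring results), or by an $\Ext$-computation argument that exploits the rank-one free $S$-module structure of each graded component. Once this analysis is in place, the filtered-ring lifting step and the induction run cleanly, and the case $S = \kk[t]$ of \cite[Proposition 3.2]{KK} then serves both to anchor the induction and as a sanity check that the $\mathrm{gr}_F T$ argument reproduces the known result.
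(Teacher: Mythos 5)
Your route is genuinely different from the paper's. The paper does not induct on the degree: it realizes $W$ in one step as $T/(z_1,\dots,z_\GWAdeg)$, where $T$ is an iterated Ore extension $\kk[t][x_1;\sigma_1]\cdots[y_\GWAdeg;\sigma_\GWAdeg^{-1},\delta_\GWAdeg]$ and $z_i = x_iy_i - h_i$ is a central regular sequence (centrality uses $\sigma_i(h_j)=h_j$ for $i\neq j$). To handle $T$ it uses a filtration with $\deg t = 2$, $\deg x_i=\deg y_i=\deg_t(h_i)$, so that $\mathrm{gr}(T)$ is a \emph{connected graded} iterated Ore extension over $\kk[t]$; then \cite[Lemma (ii)]{LevaSt} applies to $\mathrm{gr}(T)$, \cite[Lemma 4.4]{SZ} lifts to $T$, and \cite[Lemma (iii)]{LevaSt} descends to the quotient $W$. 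The whole argument never leaves the connected graded world, which is exactly what makes the citations apply.

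The gap in your plan sits at the step you flag, but it is slightly different from how you describe it. First, the detour through $\mathrm{gr}_F T$ is unnecessary: the degree one GWA $S(x,y,\sigma,h)$ is already isomorphic to $S[x;\sigma][y;\sigma^{-1},\delta]/(xy-h)$ with $\delta(x)=\sigma^{-1}(h)-h$, and $xy-h$ is central and regular there; so if you can handle the iterated Ore extension over $S=W'$, you are done without any filtration. The real obstruction is that both tools you want to invoke are only standard in the locally finite setting: (i) the inheritance of the GK--Macaulay property under Ore extensions (\cite[Lemma (ii)]{LevaSt}) is stated for graded Ore extensions of connected graded rings, and $S=W'$ is not connected graded; and (ii) the filtered-to-graded lifting for GK--Macaulayness (\cite[Lemma 4.4]{SZ}, and the $\GKdim$ bookkeeping behind it) requires the filtration to have finite-dimensional layers, whereas your $F^0T = W'$ is infinite-dimensional over $\kk$, so $\mathrm{gr}_F T$ is not connected graded and $\GKdim$ need not transfer without further argument. (Björk's theorem does cover the Auslander--Gorenstein half for Zariskian filtrations; it is the Macaulay half that breaks.) One can repair this by refining your filtration with a finite-dimensional one on the base --- but doing so systematically collapses your induction into the paper's single global construction over $\kk[t]$, which is the cleaner path. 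A small additional slip: the graded components of your $\mathrm{gr}_F T$ in positive degree are $Sx^n\oplus Sy^n$, free of rank two, not one; this does not affect the strategy.
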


\begin{proof}
Set $S_0=\kk[t]$. For $i=1,\hdots,\GWAdeg$, let 
$S_i=S_{i-1}[x_i;\sigma_i]$ where $\sigma_i$ is extended to 
$S_{i-1}$ by setting $\sigma_i(x_j)=x_j$ for $j<i$. Set 
$T_0=S_\GWAdeg$. For $i=1,\hdots,\GWAdeg$, let 
$T_i=T_{i-1}[y_i;\sigma_i\inv,\delta_i]$ where $\sigma_i\inv$ 
is extended to $T_{i-1}$ by setting $\sigma_i\inv(y_j)=y_j$ for 
$j<i$, $\delta_i(r)=0$ for all $r \in R$, 
$\delta_i(y_j) = \delta_i(x_j)=0$ for $i \neq j$, and 
$\delta_i(x_i)=\sigma_i\inv(h_i)-h_i$. Set $T=T_\GWAdeg$.

Let $k_i=\deg_t(h_i)$. Filter $T$ by setting $\deg(t)=2$ and 
$\deg(x_i)=\deg(y_i)=k_i$. Then $\mathrm{gr}(T)$ is a connected 
graded ring which itself is an iterated Ore extension over 
$\kk[t]$. For each $i=1,\hdots,\GWAdeg$, $\overline{\sigma}_i$ 
preserves the grading on $\mathrm{gr}(T)$ so $\mathrm{gr}(T)$ 
is GK--Macaulay by \cite[Lemma (ii)]{LevaSt}. Thus, $T$ is 
Auslander--Gorenstein and GK--Macaulay by \cite[Lemma 4.4]{SZ}.
 
Since $\sigma_i(h_j)=h_j$ for $i \neq j$, then the elements 
$z_i=x_iy_i-h_i$ are central in $T$. Moreover, each $W_i$ is a 
domain and so $(z_1,\hdots,z_\GWAdeg)$ is a regular sequence of 
central elements. Hence, $T/(z_1,\hdots,z_\GWAdeg) \iso W$ is 
Auslander--Gorenstein and GK--Macaulay by \cite[Lemma (iii)]{LevaSt}.
\end{proof}

The global dimension of degree one GWAs has been studied 
extensively. Less is known in the case of higher degree GWAs. 
For our purposes, the next result is sufficient.

\begin{lemma}[{\cite{bavgldim,Jkrull}}]
\label{zzlem5.7}
Let $W=\kk[t](x,y,\sigma,h)$ be a quantum GWA with $q \neq 1$ a 
root of unity. If $h$ has multiple roots, then $\gldim W = \infty$. 
Otherwise, $\gldim W= 2$.
\end{lemma}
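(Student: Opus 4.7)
The plan is to combine the homological structure of $W$ established in Lemmas~\ref{zzlem5.4} and \ref{zzlem5.6} with the detailed analysis of simple $W$-modules from \cite{bavgldim, Jkrull}.

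First I would note that $\GKdim W = 2$ by Lemma~\ref{zzlem5.4}, and $W$ is Auslander--Gorenstein and GK--Macaulay by Lemma~\ref{zzlem5.6}. Whenever $\gldim W$ is finite, $W$ is Auslander-regular, and the GK--Macaulay property forces $\gldim W = \GKdim W = 2$. Hence $\gldim W \in \{2, \infty\}$, and the question reduces to detecting when infinite global dimension occurs as a function of $h$.

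For the direction $\gldim W = \infty$ when $h$ has a multiple root, extend scalars to $\okk$, pick $\alpha \in \okk$ with $(t-\alpha)^2 \mid h$, and consider the one-dimensional weight module $S_\alpha$ on which $t$ acts by $\alpha$ and $x, y$ act as $0$. The first syzygy of $S_\alpha$ is generated by $x, y, t-\alpha$; using $xy = h$ and $yx = \sigma^{-1}(h)$, both of which vanish to order $\geq 2$ at $\alpha$, one computes that the second syzygy is essentially a translate of the first. The minimal resolution therefore develops a $2$-periodic tail, giving infinite projective dimension. Conversely, if $h$ is squarefree, Koszul-type complexes built from the multiplication maps $\cdot x$ and $\cdot y$ between adjacent weight spaces of $W$ produce projective resolutions of length at most $2$ for every simple $W$-module; squarefreeness of $h$ (and hence of every $\sigma$-translate of $h$) is exactly the condition needed to make these complexes exact beyond degree two.

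The main obstacle is the bookkeeping for these minimal resolutions, which requires a classification of simple modules that depends on whether each $\sigma$-orbit in $\okk$ meets the zero locus of $h$. This classification and the construction of the resolutions are carried out in \cite{bavgldim, Jkrull}; the quantum root-of-unity setting considered here fits directly into their general framework, and the lemma amounts to specializing their results.
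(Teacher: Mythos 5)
The paper offers no proof of this lemma; it is quoted verbatim from \cite{bavgldim,Jkrull}, so your sketch has to be judged against the arguments in those references rather than against anything in the text. Your reduction to $\gldim W\in\{2,\infty\}$ is sound: by Lemmas \ref{zzlem5.4} and \ref{zzlem5.6}, $W$ is Auslander--Gorenstein and GK--Macaulay with $\GKdim W=2$, which gives $\injdim W\le 2$, any finite-dimensional simple module forces $\injdim W\ge 2$, and finite global dimension for a noetherian ring coincides with injective dimension.

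The genuine gap is in the multiple-root direction. The one-dimensional module $S_\alpha$ you propose does not exist in general: for $Wx+Wy+W(t-\alpha)$ to be a proper left ideal you need not only $h(\alpha)=0$ (from $xy=h$) but also $\sigma^{-1}(h)(\alpha)=h(q^{-1}\alpha)=0$ (from $yx=\sigma^{-1}(h)$), and your periodicity computation further requires $q^{-1}\alpha$ to be a \emph{multiple} root of $h$. Already for $h=(t-1)^2$ with $q\ne 1$ the point $q^{-1}$ is not a root of $h$ at all, so $S_1=0$ and the argument collapses; the construction survives only in special situations such as $\alpha=0$ (e.g.\ $h=t^2$), where $W$ localizes to a quantum Kleinian singularity admitting $2$-periodic matrix-factorization resolutions. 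The correct carriers of infinite projective dimension are the finite-dimensional ``string'' simple modules supported on the segment of the $\sigma$-orbit of $\alpha$ cut out by consecutive roots of $h$, and it is for these that Bavula and Jordan exhibit the periodic tail; your sketch needs to be rerun on those modules. A secondary caution: in the squarefree direction, the general GWA theory distinguishes the sub-case where two distinct roots of $h$ lie on the same $\sigma$-orbit (which can raise the global dimension above $2$ for infinite-order $\sigma$); the lemma implicitly asserts this does not happen at roots of unity, and your Koszul-complex argument does not engage with that dichotomy.
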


\subsection{The \texorpdfstring{$\cR$-discriminant}
{reflexive hull discriminant} of a degree one quantum GWA}
\label{zzsec5.2}

In this subsection, let $W=\kk[t](x,y,\sigma,h)$ denote a 
degree one quantum GWA with $n=|\sigma|<\infty$. We study the center of $W$ and the trace map, and 
conclude this subsection by computing the reflexive hull discriminant of $W$ 
over its center. This subsection serves as a warmup to the 
next subsection where we study higher degree quantum GWAs, but omit some details.

By \cite[Corollary 2.0.2]{kulk}, the center of $W$ is generated by 
$a=x^n,b= y^n, c=t^n$. Set $p(c)=\prod_{j=0}^{n-1}h^{\sigma^j}(t)$ 
where $h^{\sigma^j}(t) = h(\sigma^j(t))= h(q^j t)$. Then the 
generators satisfy a single relation as follows
\begin{equation}
\label{E5.7.1}\tag{E5.7.1} 
Z := Z(W)=\kk[a,b,c]/\left(ab - p(c)\right).
\end{equation}

\begin{lemma}
\label{zzlem5.8}
The center $Z$ given in \eqref{E5.7.1} is an affine normal
CM domain. As a consequence, Hypotheses \ref{zzhyp1.10}(1) and 
\ref{zzhyp2.1} hold for $(W,Z)$. 
\end{lemma}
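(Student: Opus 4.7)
The plan is to establish the four ring-theoretic properties of $Z$ in sequence (affineness, domain, Cohen--Macaulay, normal), and then read off both hypotheses as corollaries.

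That $Z$ is affine is immediate from the presentation in \eqref{E5.7.1}. That $Z$ is a domain follows from its embedding into the GWA $W$, which is a domain by Lemma \ref{zzlem5.2}(2); alternatively, $ab - p(c)$ is irreducible in $\kk[a,b,c]$ since it is degree one in $a$ with leading coefficient $b$ and constant term $-p(c) \in \kk[c]$ coprime to $b$. Because $Z$ is presented as a single hypersurface in $\kk[a,b,c]$, it is a complete intersection, hence Cohen--Macaulay of Krull dimension~$2$.

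For normality I would invoke Serre's criterion $R_1 + S_2$: since $Z$ is Cohen--Macaulay it automatically satisfies $S_2$, so it suffices to verify $R_1$. The singular locus of $\Spec Z$ is cut out by the Jacobian ideal of $ab - p(c)$, namely by the partials $(b,\, a,\, -p'(c))$. Combined with the defining equation, this forces $a = b = 0$ and $p'(c_0) = 0$, so the singular points are the finitely many closed points $(0,0,c_0)$ with $c_0$ a multiple root of $p$. Since $\dim Z = 2$, this singular locus has codimension $\geq 2$, giving $R_1$. Hence $Z$ is an affine normal Cohen--Macaulay domain.

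With this established, Hypothesis \ref{zzhyp1.10}(1) is immediate. For Hypothesis \ref{zzhyp2.1}, the variety $X := \Spec Z$ is an affine integral normal $\kk$-variety by the above, so only Hypothesis \ref{zzhyp1.1} for $(W,Z)$ needs checking. Part (1) is clear: $W$ is prime (being a domain), $Z$ is central in $W$, and $W$ is generated as a $Z$-module by the finite set $\{x^i t^j,\, y^i t^j : 0 \leq i, j < n\}$, using centrality of $x^n, y^n, t^n$. Part (2), that $\tr$ has image in $Z$, follows from \cite[Theorem 10.1]{Rei} since $W$ is prime and $Z$ is now known to be a normal domain.

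The only step of real substance is the normality argument; the remaining items are essentially bookkeeping. A minor technical caveat is that the Jacobian computation requires $p'(c) \not\equiv 0$, which is automatic in characteristic zero and more generally whenever $\mathrm{char}\,\kk$ does not divide $\deg_c p$; under this mild assumption the singular locus is genuinely zero-dimensional and the argument goes through verbatim.
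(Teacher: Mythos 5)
Your proof is correct and follows essentially the same route as the paper: $Z$ is a hypersurface, hence Gorenstein and CM, with isolated singularities, hence normal by Serre's criterion, and the consequence is read off from \cite[Theorem 10.1]{Rei} together with the standing facts about $(W,Z)$. Your closing caveat about $p'(c)\not\equiv 0$ in positive characteristic is unnecessary: the singular locus is always contained in $V(a,b,p'(c))\cap\Spec Z\subseteq V(a,b)\cap\Spec Z=V(a,b,p(c))$, which is a finite set of closed points in every characteristic since $p\neq 0$.
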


\begin{proof} By definition, $Z$ has hypersurface singularities and so it is Gorenstein (and consequently, CM). It is clear that
$Z$ has isolated singularities. Hence $Z$ is normal by Serre's 
criterion for normality. 

It is easy to see that the rank of $W$ over $Z$ is $n^2$. Since 
$q$ is a primitive $n$th root of unity, ${\rm{char}}\; \kk$ does 
not divide ${\rm{rk}}_{Z}(W_Z)$. The consequence follows from 
\cite[Theorem 10.1]{Rei} and Lemma \ref{zzlem4.3}.
\end{proof}

Note that the form of the trace map $\tr$ is not essential in 
our proof of the main result of this section 
[Theorem \ref{zzthm5.11}]. However, a proof of Theorem 
\ref{zzthm5.11} can be done totally algebraically, in which 
case the trace map would be used in an essential way. In this 
particular case, it is easy to work out the trace map. We can 
write $W$ as a $Z$-algebra as follows
\begin{equation}
\label{E5.8.1}\tag{E5.8.1}
W\; =\; \frac{Z\langle x,y,t\rangle}{
(xt-qtx,yt-q^{-1}ty, xy-h(t), x^n-a,y^n-b,t^n-c)}.
\end{equation} 
Using this presentation, observe that $W$ is generated as a 
$Z$-module by the following elements 
\begin{equation}
\label{E5.8.2}\tag{E5.8.2}
\{x^it^j, y^it^j\mid i,j=0,\ldots,n-1\}.
\end{equation} 
Let $\tau=(1,t,t^2,\ldots,t^{n-1}) \in W^{\oplus n}$ and
\[
\mathbf{v}=(\tau,x\tau,x^2\tau,\ldots,x^{n-1}\tau,
y\tau,y^2\tau,\ldots,y^{n-1}\tau)\in W^{\oplus (2n^2-n)},
\]
considered as a column vector (or a $(2n^2-n)\times 1$ matrix).
The components of $\mathbf{v}$ generate $W$ as a module over $Z$ 
\eqref{E5.8.2}, and the first $n^2$ components of $\mathbf{v}$ 
form a semi-basis of $W$ (with respect to $\bv$). Consequently, 
the rank of $W$ over $Z$ is $n^2$. The modified discriminant 
ideal $\MD(W/Z)=\MD_{n^2}(W/Z)$ of $W$ is generated by the 
$n^2\times n^2$ minors of the $(2n^2-n)\times(2n^2-n)$ matrix
\[
D=\tr\cdot\mathbf{v}\mathbf{v}^T.
\]
We show that most of the entries of $D$ are zero in the next 
lemma. 

\begin{lemma}
\label{zzlem5.9}
We have $\tr(x^it^j)\ne0$ and $\tr(y^it^j)\ne 0$ if and only 
if $i\equiv 0$ mod $n$ and $j\equiv 0$ mod $n$. 
\end{lemma}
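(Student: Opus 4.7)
The plan is to exploit the $\ZZ$-grading on $W$ coming from $\deg x = 1$, $\deg y = -1$, $\deg t = 0$. The central generators $a = x^n$, $b = y^n$, $c = t^n$ all have $\ZZ$-degree divisible by $n$, so reducing this grading modulo $n$ gives a $\ZZ/n$-grading on $W$ as a $Z$-module. Let $F = \mathrm{Frac}(Z)$. My first step is to verify that
\[ \mathbf{b}_0 := \{x^i t^j : 0 \le i, j \le n-1\} \]
is an $F$-basis of $W_F$ that refines this $\ZZ/n$-grading. Since the $Z$-rank of $W$ is $n^2 = |\mathbf{b}_0|$, it suffices to show $\mathbf{b}_0$ spans $W_F$ over $F$. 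In $W_F$, both $x^n = a$ and $t^n = c$ are units, so we may reduce powers of $x$ and $t$ modulo $n$; and $y = x^{-1} h(t) = a^{-1} x^{n-1} h(t)$ is a consequence of $xy = h(t)$, which inductively puts every element of the semi-basis from \eqref{E5.8.2} into the $F$-span of $\mathbf{b}_0$. Each basis element $x^i t^j$ lies in $\ZZ/n$-degree $i$, so the basis is compatible with the grading.

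Second, I compute the trace $\tr(x^I t^J)$ by left-multiplying on $\mathbf{b}_0$. Using $tx = q^{-1}xt$ iteratively gives
\[ (x^I t^J)(x^i t^j) = q^{-iJ}\, x^{I+i} t^{J+j} = q^{-iJ}\, a^{\lfloor (I+i)/n\rfloor}\, c^{\lfloor (J+j)/n\rfloor}\, x^{(I+i)\bmod n}\, t^{(J+j)\bmod n}. \]
A diagonal contribution (coefficient of $x^i t^j$) therefore requires $(I+i)\equiv i$ and $(J+j)\equiv j$ modulo $n$, equivalently $n\mid I$ and $n \mid J$. When $I = I'n$, $J = J'n$, one has $q^{-iJ'n} = 1$ (since $q^n = 1$), so every diagonal entry equals $a^{I'}c^{J'}$ and $\tr(x^I t^J) = n^2 a^{I'} c^{J'}$, which is nonzero by Lemma \ref{zzlem5.8} (since $\mathrm{char}\,\kk \nmid n$).

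For $\tr(y^I t^J)$, I would invoke the grading argument rather than compute directly. The element $y^I t^J$ has $\ZZ/n$-degree $-I$, so left multiplication by it shifts the $\ZZ/n$-degree of basis elements by $-I$. If $I \not\equiv 0 \pmod n$, the resulting matrix is block off-diagonal with respect to $\mathbf{b}_0$ and has trace $0$. If $I = I'n$, then $y^I = b^{I'} \in Z$, so $\tr(y^I t^J) = b^{I'}\tr(t^J)$, and the identical calculation in the previous step (applied to $\tr(t^J) = \tr(x^0 t^J)$) shows this vanishes unless $n\mid J$, in which case it equals $n^2 b^{I'} c^{J'} \ne 0$.

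The main obstacle is simply setting up the $F$-basis correctly and justifying that it refines the $\ZZ/n$-grading; once this is in place, the computation is a routine bookkeeping exercise using $q^n = 1$ and the standard GWA commutation relations. No analysis of the more complicated products $x^a y^c$ is needed, because the lemma concerns only monomials in $x, t$ or $y, t$ separately, which is exactly what makes the grading argument effective.
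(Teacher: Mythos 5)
Your proof is correct and follows essentially the same route as the paper's: both rest on the $\ZZ/n\ZZ$-grading (equivalently, the fact that left multiplication by $x^It^J$ permutes the monomial semi-basis $\{x^it^j\}_{i,j=0}^{n-1}$ up to scalars, fixing no basis vector unless $n\mid I$ and $n\mid J$), and both conclude that central monomials have trace $n^2$ times themselves, which is nonzero since ${\rm char}\,\kk\nmid n$. Your version is somewhat more explicit in verifying that $\{x^it^j\}$ is an $F$-basis and in computing the scalar $q^{-iJ}$, but no new idea is involved.
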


\begin{proof}
The $Z$-algebra $W$ has a $\ZZ/n\ZZ$-graded algebra structure, 
obtained by assigning the $Z$-algebra generators $x,y,t$ the 
degrees $1,-1,0$ respectively. Indeed, the relations provided 
in \eqref{E5.8.1} are all homogeneous under this grading. 

Left multiplication by $x^it^j$ shifts the degree by a nonzero 
integer unless $i\equiv0$ mod $n$. Therefore, if $i\not\equiv 0$ 
mod $n$, then $\tr(x^it^j)=0$. Next assume $i=kn$ for some 
integer $k$. Left multiplication by $x^{kn}t^j$ for 
$j\not\equiv 0$ mod $n$ permutes the semi-basis 
$\{x^it^j\}_{i,j=0}^{n-1}$ (up to scalar). Therefore, if 
$i,j\not\equiv 0$ mod $n$, then $\tr(x^i t^j)=0$.
Finally, if $i\equiv j\equiv 0$ mod $n$, then $x^it^j \in Z$ 
so left multiplication by $x^it^j$ can be represented by a 
diagonal matrix, hence $\tr(x^it^j) = n^2x^it^j$.  
The above argument still works if we replace all occurrences 
of $x$ by $y$. So this completes the proof.
\end{proof}

It is very complicated to compute the $\cR$-discriminant 
by using the definition, since $\MD(W/Z)$ is complicated.
So we will compute the $\cR$-discriminant locally first.
In the following lemma, we  use the elements of $\mathbf{v}$ 
to index the rows and columns of $D$, so that the first row 
of $D$ is the $1$ row, the second row is the $t$ row, and so on. 

\begin{lemma}
\label{zzlem5.10}
The following hold for the $(2n^2-n)\times(2n^2-n)$ square 
matrix $D=\tr\cdot \mathbf{v}\mathbf{v}^T$. 
\begin{enumerate}
\item[(1)] 
Let $0<i<n$. The only nonzero entries in the $x^it^j$ row of 
$D$ are in the $x^{n-i}t^{n-j}$ and $y^i t^{j'}$ columns for 
some $0\leq j'\leq n-1$.
\item[(2)] 
The statement in (1) holds if we switch $x$ and $y$. 
\item[(3)] 
The only nonzero entry in row $t^j$ is in column $t^{n-j}$.
\end{enumerate}
\end{lemma}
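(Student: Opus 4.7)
The plan is a direct calculation: each entry of the trace matrix $D$ is $\tr(uv)$ for monomial generators $u, v$ drawn from the vector $\mathbf{v}$, and I reduce $uv$ to a (possibly $t$-shifted) monomial or, in the mixed $x$-$y$ case, to a polynomial in $t$ times a power of $x$ or $y$. Then I appeal to Lemma \ref{zzlem5.9} as the vanishing criterion. The two ingredients I need are the commutation rules $t^j x^a = q^{-ja} x^a t^j$ and $t^j y^a = q^{ja} y^a t^j$ (immediate from $xt = qtx$ and $yt = q^{-1}ty$), and the rewriting rule for $x^i y^a$ obtained by iterating $xy = h(t)$ together with $tx = q^{-1}xt$.

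For part (1), I fix a row $x^i t^j$ with $0 < i < n$ and treat the two blocks of columns separately. For an $x$-block column $x^a t^b$ with $0 \leq a, b \leq n-1$, the commutation rule gives
\[
(x^i t^j)(x^a t^b) = q^{-ja}\, x^{i+a}\, t^{j+b}.
\]
Lemma \ref{zzlem5.9} then requires both $i + a \equiv 0 \pmod n$ and $j + b \equiv 0 \pmod n$, and the bounds $0 < i < n$, $0 \leq a, b \leq n-1$ pin these down to $a = n-i$ and $b \equiv -j \pmod n$, giving the single $x$-block column $x^{n-i} t^{(n-j) \bmod n}$ as claimed. For a $y$-block column $y^a t^b$ with $1 \leq a \leq n-1$, iterating $xy = h(t)$ shows $x^i y^a$ is a polynomial in $t$ times $y^{a-i}$ (if $a > i$), $x^{i-a}$ (if $a < i$), or $1$ (if $a = i$); hence $(x^i t^j)(y^a t^b)$ has total $x$- or $y$-exponent $|i-a|$. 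When $a \neq i$ this exponent lies in $\{1,\ldots,n-1\}$, so Lemma \ref{zzlem5.9} forces the trace to vanish, while for $a = i$ the product is a polynomial in $t$ times a scalar, and a nonzero entry can occur only in a column of the form $y^i t^{j'}$ with $0 \leq j' \leq n-1$.

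Part (2) is the symmetric statement obtained by swapping $x \leftrightarrow y$ and replacing $q$ by $q^{-1}$ throughout; the argument is verbatim the same. For part (3) I take $i = 0$ in the computation from part (1), which yields $(t^j)(x^a t^b) = q^{-ja} x^a t^{j+b}$ and $(t^j)(y^a t^b) = q^{ja} y^a t^{j+b}$. The first has nonzero trace only when $a = 0$ and $b = (n-j) \bmod n$; in the second, the restriction $a \geq 1$ on $y$-block columns combined with Lemma \ref{zzlem5.9} forces every entry to vanish. This leaves the unique nonzero column $t^{(n-j) \bmod n}$.

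The only real obstacle is bookkeeping: keeping the commutation scalars straight, handling the interpretation of $t$-exponents modulo $n$ (so that the column ``$t^{n-j}$'' becomes $t^0$ when $j = 0$), and recognizing that the polynomial factor from expanding $x^i y^i$ is irrelevant to the vanishing analysis in part (1) because Lemma \ref{zzlem5.9} only sees the $x$-$y$ exponent. No further ideas beyond Lemma \ref{zzlem5.9} are needed.
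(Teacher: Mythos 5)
Your proof is correct and follows essentially the same route as the paper: reduce each product of monomial generators using the commutation relations and $xy=h(t)$, then apply Lemma \ref{zzlem5.9} (equivalently, the $\ZZ/n\ZZ$-grading) to rule out all but the stated columns. You simply spell out the reduction of $x^iy^a$ and the modular bookkeeping that the paper's terser proof leaves implicit.
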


\begin{proof}
Consider the product $\rho = (x^i t^j) (y^k t^l)$. Using 
Lemma \ref{zzlem5.9}, and the grading on $W$ given in the 
proof, we see that $\tr(\rho)\ne 0$ implies that 
$i\equiv k$ mod $n$. 
Similarly, $\tr((x^i t^j) (x^k t^l))\ne 0$ if and only 
if $i+k\equiv 0$ mod $n$ and $j+l\equiv 0$ mod $n$.
The statement for $\tr(t^j)$ follows similarly.
\end{proof}

Recall from Lemma \ref{zzlem5.8} that $Z$ is a normal CM domain.

\begin{theorem}
\label{zzthm5.11} 
Retain the above notation. Then 
$\csr(W/Z)=_{Z^{\times}}c^{n(n-1)}=_{Z^{\times}}t^{n^2(n-1)}$. 
As a consequence, $\bar{\csr}(W/Z)=_{W^{\times}}c^{n(n-1)}
=_{W^{\times}}t^{n^2(n-1)}$.
\end{theorem}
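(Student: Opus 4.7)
The strategy is to invoke the geometric criterion of Lemma~\ref{zzlem2.3}: rather than tangling directly with the full modified discriminant ideal (which is messy, as suggested by Lemma~\ref{zzlem5.10}), compute an $\cR$-discriminant locally on an open subset $U \subseteq X = \Spec Z$ whose complement has codimension $\geq 2$. Crucially, on the locally free locus of $W$ over $Z$, Lemma~\ref{zzlem2.4}(3) lets us replace the modified discriminant by the ordinary discriminant of a free algebra, so the computation reduces to an honest determinant calculation after inverting a suitable central element.

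Take $U := U_a \cup U_b$ where $U_a = D(a)$ and $U_b = D(b)$. To check the codimension condition, observe that $X \setminus U$ is cut out inside $X$ by $a = b = 0$, which combined with the hypersurface equation $ab = p(c)$ forces $p(c) = 0$, so $X \setminus U \iso \Spec \kk[c]/(p(c))$. Since $h \neq 0$ implies $p(c)$ is a nonzero polynomial in $c$, this is a $0$-dimensional (or empty) subscheme of the $2$-dimensional variety $X$, hence of codimension $\geq 2$. On $U_a$, $x$ becomes a unit since $x \cdot (x^{n-1}a^{-1}) = 1$, and the relation $xy = h(t)$ forces $y = x^{-1}h(t)$; hence $W_a \iso \kk_q[x^{\pm 1}, t]$ is free of rank $n^2$ over its center $Z_a = \kk[a^{\pm 1}, c]$ with basis $\bb := \{x^i t^j : 0 \leq i, j \leq n-1\}$. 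The argument on $U_b$ is symmetric, swapping the roles of $(x,a)$ and $(y,b)$ and replacing $q$ by $q^{-1}$.

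The key computation is the determinant of the trace matrix $(\tr(bb'))_{b, b' \in \bb}$. A direct extension of Lemma~\ref{zzlem5.9} using the $\ZZ/n \times \ZZ/n$-grading on $W_a$ (with $\deg(x) = (1,0)$, $\deg(t) = (0,1)$) shows that $\tr(x^i t^j \cdot x^k t^l) = 0$ unless $i + k \equiv 0 \equiv j + l \pmod n$, in which case it is a unit scalar multiple of $a^{\epsilon(i)} c^{\epsilon(j)}$, where $\epsilon(0) = 0$ and $\epsilon(r) = 1$ for $1 \leq r \leq n-1$. The trace matrix is thus a monomial (permutation-times-diagonal) matrix, and its determinant, up to units, is
\[
\prod_{i, j = 0}^{n-1} a^{\epsilon(i)} c^{\epsilon(j)} \;=\; a^{n(n-1)} c^{n(n-1)},
\]
which is $=_{Z_a^\times} c^{n(n-1)}$ since $a$ is invertible on $U_a$. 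By Lemma~\ref{zzlem2.4}(3), this equals $\restr{\MD(W/Z)}{U_a}$, and by symmetry the same formula holds on $U_b$. Therefore $\MD(W/Z)$ agrees with the principal ideal $(c^{n(n-1)}) \subseteq Z$ on all of $U$, and Lemma~\ref{zzlem2.3}(2) yields $\csr(W/Z) =_{Z^\times} c^{n(n-1)} = t^{n^2(n-1)}$. The consequence for $\bar{\csr}(W/Z)$ follows either from Lemma~\ref{zzlem1.16} (after noting that $W$ is reflexive over $Z$ via the argument of Lemma~\ref{zzlem4.2}(3), enabled by the Auslander--Gorenstein property in Lemma~\ref{zzlem5.6}), or by repeating the local computation inside $W$ and invoking Lemma~\ref{zzlem2.3}(1).

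The main obstacle is not any single deep step but the bookkeeping required in the local computation: identifying the sparsity pattern of the trace matrix from the grading, verifying that the product over the $n^2$ nonzero entries gives exactly $a^{n(n-1)} c^{n(n-1)}$ (and not some off-by-one variant), and confirming that the codimension-$2$ complement really behaves as advertised even when $h$ has multiple roots so that $X$ is singular, which is precisely the regime that forces us through the reflexive-hull closure rather than ordinary gcds.
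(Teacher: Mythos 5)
Your proof is correct and follows essentially the same route as the paper's: localize to $U_a\cup U_b$ (whose complement has codimension $\geq 2$), compute the discriminant of the free algebra $W_a$ from the sparsity pattern of the trace matrix to get $(ac)^{n(n-1)}=_{Z_a^{\times}}c^{n(n-1)}$, and glue via Lemma \ref{zzlem2.3} (the paper phrases this gluing in terms of Cartier divisors and Lemma \ref{zzlem2.2}, and cites \cite{CYZ1} for the determinant, but the content is identical). The one cosmetic difference is the reflexivity of $W$ over $Z$ needed for the consequence: the paper deduces it from $W$ being CM (Lemma \ref{zzlem5.6}), hence S2, over the normal surface $\Spec Z$, rather than via the argument of Lemma \ref{zzlem4.2}(3), which as stated is for hom-hom rings and does not literally apply since $W$ may have infinite global dimension.
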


\begin{proof}
Let $U_a$ and $U_b$ denote the open subsets of $X$ where 
$a\ne 0$ and $b\ne 0$ respectively. Note that the complement 
of $U:=U_a\cup U_b$ has codimension 2. It is easy to check that 
$W_a:=\restr{W}{U_a}=W[a^{-1}]$ is free of rank $n^2$ and has 
basis $\{x^it^j\mid i,j=0,\ldots,n-1\}$. By 
\cite[Proposition 1.4(2)]{CYZ1} or by a computation using 
Lemma \ref{zzlem5.10}(1,3), the discriminant of $W_a$ is given 
by $(ac)^{n(n-1)}=_{Z(W_a)^{\times}} c^{n(n-1)}$, up to 
some unit in $Z(W_a)$. Similarly the discriminant of $W_b$ is 
given by $(bc)^{n(n-1)}=_{Z(W_b)^{\times}} c^{n(n-1)}$, again 
up to some unit in $Z(W_b)$. This defines a Cartier divisor on 
$U$, which extends to a Cartier divisor on all of 
$X$. Indeed, the data $\{(U_a,c^{n(n-1)}), (U_b,c^{n(n-1)}), 
(X,c^{n(n-1)})\}$ defines a Cartier divisor on $X$ 
which restricts to the above Cartier divisor on $U$. 
The assertion now follows by Lemma \ref{zzlem2.2}.

For the consequence, we must show that $W$ is a reflexive 
$Z$-module. By Lemma \ref{zzlem5.6}, $W$ is GK--Macaulay, 
or equivalently, $W$ is CM. In particular, $W$ satisfies Serre's S2 
property. Since $\mathrm{Spec}(Z)$ is a normal surface, it 
satisfies the hypotheses of \cite[Proposition 1.9]{Ha}, so 
we conclude that $W$ is reflexive.
\end{proof}

\subsection{The \texorpdfstring{$\cR$-discriminant}
{reflexive hull discriminant} of a degree 
\texorpdfstring{$m$}{m} quantum GWA}
\label{zzsec5.3}

Our goal in this subsection is to generalize results from 
the previous subsection to higher degree quantum GWAs. This
result will be used in our applications.

Throughout this subsection, let $W=\kk[t](\bx,\by,\sigma,h)$ 
be a quantum GWA of degree $\GWAdeg$. As in Corollary 
\ref{zzcor5.3}, set $\GWAord_i = |\sigma_i|<\infty$ for 
all $i$ and assume $\gcd(\GWAord_i,\GWAord_j)=1$ for all 
$i \neq j$. Set $\GWAord =\GWAord_1 \cdots \GWAord_\GWAdeg$. 
The center $Z:=Z(W)$ of $W$ is generated by $a_i = x_i^{\GWAord_i}$, 
$b_i = y_i^{\GWAord_i}$, and $c=t^{\GWAord}$. Let 
\[ p_i(c) = \prod_{j=0}^{\GWAord_i-1}h_i^{\sigma_i^j}(t) \]
where $h_i^{\sigma_i^j}(t) = h_i(\sigma_i^j(t))= h_i(q_i^j t)$.
Thus, by Corollary \ref{zzcor5.3},
\[ Z = \frac{\kk[a_1,\hdots,a_\GWAdeg,b_1,\hdots,
b_\GWAdeg,c]}{\left(a_ib_i - p_i(c) \mid 1 \leq i \leq m \right)}. \]
In Theorem \ref{zzthm5.14} 
we show that the $\cR$-discriminant of $W$ is $c^{\GWAord(\GWAord-1)} 
\in Z$.

\begin{lemma}
\label{zzlem5.12}
Let $Z$ be the algebra defined as above. Then $Z$ is an affine 
normal CM domain. As a consequence, Hypotheses \ref{zzhyp1.10}(1) 
and \ref{zzhyp2.1} hold for $(W,Z)$. 
\end{lemma}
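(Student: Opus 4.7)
The plan is to adapt the argument of Lemma \ref{zzlem5.8} from the degree-one setting to higher degree $\GWAdeg$; the new feature is that $Z$ now has $\GWAdeg$ defining relations $a_i b_i - p_i(c)$ rather than a single hypersurface equation. I would proceed in three steps: show $Z$ is a complete intersection (hence CM), show $Z$ is a domain, and show $Z$ is normal; the stated consequences then follow formally.

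For the CM property, I would exhibit the localization $Z[(a_1 \cdots a_\GWAdeg)^{-1}] \iso \kk[a_1^{\pm},\ldots,a_\GWAdeg^{\pm},c]$, since each relation determines $b_i = p_i(c)/a_i$ once $a_i$ is inverted. This chart has dimension $\GWAdeg + 1$, so $\dim Z \geq \GWAdeg + 1$; combining with Krull's principal ideal theorem (which bounds the height of the defining ideal in $P = \kk[a_1,\ldots,b_\GWAdeg,c]$ by $\GWAdeg$) forces $\dim Z = \GWAdeg + 1$ and the height to equal $\GWAdeg$. Since $P$ is CM, the generators then form a regular sequence and $Z$ is a complete intersection, hence CM.

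For the domain property, I would show each $a_i$ is a non-zero-divisor in $Z$, so that $Z$ embeds into its Laurent localization, which is a domain. Since $Z$ is CM, an element is a non-zero-divisor iff it lies in no minimal prime; a minimal prime containing $a_i$ would yield an irreducible component of $\Spec Z$ contained in $V(a_i)$. But $Z$ is pure-dimensional of dimension $\GWAdeg + 1$ (from CM), while setting $a_i = 0$ forces $p_i(c) = 0$ via $a_ib_i = p_i(c)$, restricting $c$ to a finite set, and a standard count gives $\dim V(a_i) = \GWAdeg$. This contradiction shows each $a_i$ is a non-zero-divisor.

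For normality, $S_2$ is automatic from CM, so only $R_1$ remains in Serre's criterion, which I would verify via the Jacobian criterion. The Jacobian of $(a_j b_j - p_j(c))_j$ has row $i$ supported only in columns $a_i, b_i, c$, with entries $b_i, a_i, -p_i'(c)$; thus any nontrivial linear dependence among rows forces $a_k = b_k = 0$ on the support of that dependence. This decomposes the singular locus into the strata $\{a_i = b_i = 0,\ p_i'(c) = 0\}$ and $\{a_i = b_i = a_j = b_j = 0\}$ for $i \neq j$; since $a_i = b_i = 0$ already forces $c$ into the finite root set of $p_i$, a dimension count shows both strata have codimension at least $2$. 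The main technical obstacle lies in this last dimension count, but the block structure of the Jacobian keeps it transparent. The stated consequences are then immediate: $Z$ is affine and CM [Hypothesis \ref{zzhyp1.10}(1)], and for Hypothesis \ref{zzhyp2.1} one checks Hypothesis \ref{zzhyp1.1} for $(W, Z)$ by noting that $W$ is prime (Lemma \ref{zzlem5.2}(2)), module-finite over $Z$ via monomial generators analogous to \eqref{E5.8.2}, and satisfies $\tr(W) \subseteq Z$ by Reiner's theorem as in Lemma \ref{zzlem4.2}(2) since $Z$ is normal, while $\Spec Z$ is affine, integral, and normal by the above.
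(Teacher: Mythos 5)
Your overall route matches the paper's: the paper likewise obtains CM by declaring $Z$ a complete intersection, and proves normality by combining Serre's criterion with exactly your block Jacobian $J=(\mathrm{diag}(b_1,\ldots,b_\GWAdeg)\,|\,\mathrm{diag}(a_1,\ldots,a_\GWAdeg)\,|\,v)$, observing that a rank drop forces $a_i=b_i=0$ for some $i$ and that this locus has codimension $\geq 2$. Your separate proof that $Z$ is a domain is not needed in the paper, which gets it for free because $Z=Z(W)$ sits inside the domain $W$ (Lemma \ref{zzlem5.2}(2)); your treatment of the consequences agrees with what is done in the degree-one case (Lemma \ref{zzlem5.8}).

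One deduction does not go through as written. In the complete-intersection step, the chart $Z[(a_1\cdots a_\GWAdeg)^{-1}]\cong \kk[a_1^{\pm},\ldots,a_\GWAdeg^{\pm},c]$ gives $\dim Z\geq \GWAdeg+1$, while Krull's principal ideal theorem gives $\operatorname{ht}(I)\leq \GWAdeg$ in $P$, which (since $P$ is an affine polynomial ring, so $\dim(P/I)=\dim P-\operatorname{ht}(I)$) is the \emph{same} inequality $\dim Z\geq \GWAdeg+1$. Two lower bounds cannot force the equality $\dim Z=\GWAdeg+1$: a priori $V(I)$ could have a component of dimension $>\GWAdeg+1$ contained in $\{a_1\cdots a_\GWAdeg=0\}$, and such a component is precisely what would destroy the regular-sequence conclusion. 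The repair is the fiber count you already use elsewhere: if $a_i=0$ on a component, then $a_ib_i=p_i(c)$ forces $p_i(c)=0$, confining $c$ to a finite set, and over each such value $c_0$ the equations $a_jb_j=p_j(c_0)$ cut out an $\GWAdeg$-dimensional locus; hence every component of $V(I)$ has dimension $\leq\GWAdeg+1$, so $\operatorname{ht}(I)=\GWAdeg$ and unmixedness applies. (Alternatively, verify directly that the $a_ib_i-p_i(c)$ form a regular sequence, e.g., because each successive quotient is a free $\kk[c]$-module.) With that repair, the rest of your argument --- including the stratification of the singular locus and the codimension-two count --- is correct and in fact more detailed than the paper's, which asserts the complete-intersection claim without comment.
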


\begin{proof}
By definition, $Z$ is a complete intersection. Hence it is 
Gorenstein and CM. By the Jacobian criterion, the singular locus 
of $X:=\Spec Z$ has codimension $\geq 2$. Indeed, the Jacobian 
matrix of the defining ideal is given by the block matrix 
$J=(\mathrm{diag}(b_1,\cdots,b_m) | \mathrm{diag}(a_1,\cdots,a_m) | v)$ 
where $v$ is $m\times 1$ and depends only on $c$. The singular 
locus of $X$ occurs when $\mathrm{rank}(J)<m$, and a necessary 
condition for this is $a_i=b_i=0$ for some $i$. This shows that 
$\mathrm{codim}_X(X_{\mathrm{sing}}) \ge 2$, so $X$ is normal by 
Serre's criterion for normality. 
\end{proof}

We can write $W$ as a $Z$-algebra as follows:
\begin{equation}
\label{E5.12.1}\tag{E5.12.1}
W = \frac{Z\langle x_1\hdots, 
x_\GWAdeg,y_1,\hdots, y_\GWAdeg,t\rangle}{(x_it-q_itx_i,y_it-
q_i^{-1}ty_i, x_iy_i-h_i(t), x_i^{\GWAord_i}-a_i,y_i^{\GWAord_i}-b_i,
t^{\GWAord}-c)}.
\end{equation}

\begin{lemma}
\label{zzlem5.13}
Retain the above notation. Then $W$ is reflexive over $Z$.
\end{lemma}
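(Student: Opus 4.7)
The plan is to adapt the argument from Theorem \ref{zzthm5.11}, which combined the Cohen--Macaulay property of $W$ with normality of $Z$ via the surface-specific criterion \cite[Proposition 1.9]{Ha}. Since $\dim Z = \GWAdeg + 1$ can be arbitrarily large here, I would replace that appeal with the general criterion that a finitely generated torsion-free module over a normal noetherian domain is reflexive if and only if it satisfies Serre's condition $S_2$.

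The inputs are already in hand. Lemma \ref{zzlem5.12} gives that $Z$ is an affine normal CM domain of dimension $\GWAdeg + 1$, and the presentation \eqref{E5.12.1} shows that $W$ is a finitely generated $Z$-module. Since $W$ is a domain (Lemma \ref{zzlem5.2}(2)) containing $Z$, it is torsion-free over $Z$. By Lemma \ref{zzlem5.6}, $W$ is Auslander--Gorenstein and GK--Macaulay, and by Lemma \ref{zzlem5.4} we have $\GKdim W = \GKdim \kk[t] + \GWAdeg = \GWAdeg + 1 = \dim Z$.

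The next step is to promote GK--Macaulayness of $W$ as a ring to maximal Cohen--Macaulayness of $W$ as a $Z$-module: one needs $\mathrm{depth}_{Z_{\mathfrak{p}}} W_{\mathfrak{p}} = \dim Z_{\mathfrak{p}}$ for every prime $\mathfrak{p}$ of $Z$. Given this, $W$ satisfies $S_2$ over $Z$, and combined with torsion-freeness and normality of $Z$, the $S_2$-criterion delivers reflexivity.

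The main obstacle is precisely this bridge between the ring-theoretic GK--Macaulay condition and module-theoretic Cohen--Macaulayness over $Z$. A cleaner alternative that sidesteps the bridge --- in the spirit of the proof of Lemma \ref{zzlem4.2}(3) --- is a Noether-normalization argument: choose a polynomial subring $S \subseteq Z$ over which $Z$ is finite and free (using the CM property of $Z$), verify that $W$ is then a finite free $S$-module by applying the Auslander--Buchsbaum formula locally to the CM ring $W$, conclude that $W$ is reflexive over $S$, and then transfer reflexivity from $S$ up to $Z$ via \cite[Lemma 2.1]{SVdB}.
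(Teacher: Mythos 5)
Your main route is in fact the paper's own proof: the paper again invokes Hartshorne's Proposition 1.9, which is not surface-specific but applies to any scheme satisfying $G_1$ and $S_2$ (here supplied by Lemma \ref{zzlem5.12}, since $Z$ is Gorenstein and CM with singular locus of codimension $\geq 2$), and then concludes reflexivity because $W$ is a CM, hence $S_2$, torsion-free $Z$-module by Lemma \ref{zzlem5.6} --- so no replacement of the criterion is actually needed. The ``bridge'' you flag, from GK--Macaulayness of $W$ as a ring to (maximal) Cohen--Macaulayness of $W$ as a $Z$-module, is a genuine elision, but the paper makes exactly the same leap, and your Noether-normalization alternative only relocates it rather than closing it: to deduce from Auslander--Buchsbaum that $W$ is free over the polynomial subring $S$ you still need $\operatorname{depth}_S W = \dim S$, which is the same module-theoretic CM statement. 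That statement is standard for an Auslander--Gorenstein, GK--Macaulay PI ring that is module-finite over its affine center (cf.\ \cite{SZ, BM}), so both your version and the paper's are correct modulo the same implicit appeal.
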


\begin{proof}
The proof is almost identical to the dimension $2$ case 
(c.f. Theorem \ref{zzthm5.11}). Since $Z$ is a CM with
$\mathrm{codim}_X(X_{\mathrm{sing}}) \ge 2$ (where $X:=\Spec Z$),
the hypotheses of \cite[Proposition 1.9]{Ha} are satisfied. Then 
$W$ is $Z$-reflexive since it is a CM [Lemma \ref{zzlem5.6}] 
(hence S2) $Z$-module. 
\end{proof}

Here is the main result of this subsection. 

\begin{theorem}
\label{zzthm5.14}
Let $W$ be a quantum GWA of degree $\GWAdeg>1$ as given in 
\eqref{E5.12.1}. Then we have
$\csr(W/Z)=_{Z^{\times}}c^{n(n-1)}=_{Z^{\times}}t^{n^2(n-1)}.$ 
As a consequence, $\bar{\csr}(W/Z)=_{W^{\times}}c^{n(n-1)}
=_{W^{\times}}t^{n^2(n-1)}$.
\end{theorem}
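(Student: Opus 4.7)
The plan is to follow the strategy used in the degree one case (Theorem~\ref{zzthm5.11}) almost verbatim, replacing a single pair $(a,b)$ by the $m$ pairs $(a_i,b_i)$ and taking advantage of the geometric criterion in Lemma~\ref{zzlem2.3}. First I would construct a good open cover of $X := \Spec Z$. For each $\varepsilon = (\varepsilon_1, \dots, \varepsilon_m) \in \{a,b\}^m$, let $V_\varepsilon \subseteq X$ be the open subset where $\varepsilon_i \ne 0$ for every $i$ (so $\varepsilon_i$ is interpreted as either $a_i$ or $b_i$), and let $U = \bigcup_\varepsilon V_\varepsilon$. The complement $X \setminus U = \bigcup_{i=1}^m \{a_i = b_i = 0\}$, and since $a_i b_i = p_i(c)$, the relation forces $p_i(c) = 0$ on $\{a_i = 0\}$, so $\{a_i = b_i = 0\}$ is cut out by two genuinely independent conditions and has codimension $2$ in the $(m+1)$-dimensional normal CM variety $X$ (cf.\ Lemma~\ref{zzlem5.12}). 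Thus $X \setminus U$ has codimension $\geq 2$, which is the hypothesis needed for Lemma~\ref{zzlem2.3}.

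Next I would compute $\MD(W/Z)$ explicitly over each $V_\varepsilon$. On $V_{(a,\dots,a)}$, inverting each $a_i = x_i^{n_i}$ makes every $x_i$ invertible; the relation $x_iy_i = h_i(t)$ then lets us eliminate each $y_i$, so $\restr{W}{V_{(a,\dots,a)}}$ is a localization of the PI skew polynomial algebra $\kk_{\mathbf{q}}[x_1,\dots,x_m,t]$ with $x_i t = q_i t x_i$ and $[x_i, x_j] = 0$ (a ring to which Theorem~\ref{zzthm4.6} and the discriminant formula \cite[Proposition 1.4(2)]{CYZ1} apply). The resulting free $Z$-basis may be taken to be the monomials $\{x_1^{k_1}\cdots x_m^{k_m} t^j : 0 \leq k_i < n_i, \ 0 \leq j < n\}$, confirming $\rk_Z W = n^2$. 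A straightforward adaptation of Lemma~\ref{zzlem5.9}--Lemma~\ref{zzlem5.10} using the natural $(\ZZ/n_1\ZZ)\oplus\cdots\oplus(\ZZ/n_m\ZZ)\oplus \ZZ/n\ZZ$ grading on $W$ shows that only monomials lying in $Z$ have nonzero regular trace, and the resulting trace matrix (after reordering rows and columns) has a block structure whose determinant is easily computed as a monomial. The outcome, up to a unit in $Z(V_{(a,\dots,a)})^\times$, is $(a_1 \cdots a_m c)^{n(n-1)}$, which coincides with $c^{n(n-1)}$ once the $a_i$ are inverted. By symmetry, the analogous computation on any $V_\varepsilon$ yields the same expression $c^{n(n-1)}$ up to units.

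These local discriminants patch to a Cartier divisor on $U$ which is cut out globally by the single central element $c^{n(n-1)} \in Z$; since $X \setminus U$ has codimension $\geq 2$, Lemma~\ref{zzlem2.3}(2) gives $\csr(W/Z) =_{Z^\times} c^{n(n-1)} =_{Z^\times} t^{n^2(n-1)}$. For the consequence on $\bar{\csr}(W/Z)$, by Lemma~\ref{zzlem5.13}, $W$ is a reflexive $Z$-module, and by Lemma~\ref{zzlem5.12}, $Z$ is an affine normal CM domain, so Lemma~\ref{zzlem1.16} upgrades the statement about $\csr(W/Z)$ to the statement about $\bar{\csr}(W/Z)$.

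The main obstacle will be the local discriminant computation in the second step: while the degree one case can invoke \cite[Proposition 1.4(2)]{CYZ1} off the shelf, in higher degree we have $m+1$ skew-commuting generators interacting through a common $t$, so the trace matrix is larger and its block decomposition requires more bookkeeping. I expect the cleanest argument is to recognize the localized algebra as a skew polynomial ring to which Lemma~\ref{zzlem4.5}(3,4) applies, reducing the computation to identifying which monomials in $\{x_1^{k_1}\cdots x_m^{k_m}t^j\}$ pair with their ``inverses'' via the trace. Everything else (codimension bound, reflexivity, and the passage from $\csr$ to $\bar{\csr}$) is a direct application of results already established earlier in the paper.
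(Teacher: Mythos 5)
Your proposal is correct and follows essentially the same route as the paper: the same open cover of $\Spec Z$ by the loci where one of $a_i,b_i$ is invertible for each $i$ (with complement of codimension $\geq 2$), the same local computation of the discriminant of the free module $\restr{W}{V_\varepsilon}$ via the monomial basis and the trace pairing, the same gluing to a Cartier divisor cut out by $c^{n(n-1)}$, and the same passage from $\csr$ to $\bar{\csr}$ via Lemmas \ref{zzlem5.13} and \ref{zzlem1.16}. The only (immaterial) deviation is your claimed unit multiple $(a_1\cdots a_m)^{n(n-1)}$ of the local discriminant versus the paper's $\prod_j a_j^{n(n-n/n_j)}$, which agree up to units on $V_\varepsilon$ where the $a_j$ are invertible.
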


\begin{proof}
Let $U$ denote the open set of $X:=\Spec Z$ with 
$\prod_{i=1}^m a_i\ne 0$. Then $\restr{W}{U}$ is free over its 
center, with basis given by $\{\mathbf{x}^\alpha t^j:=
x_1^{\alpha_1}x_2^{\alpha_2}\cdots x_m^{\alpha_m}t^j\}$ where 
$\alpha_i$ ranges from $0$ to $n_i-1$ and $j$ ranges from $0$ 
to $n-1$ (here $n=n_1\cdots n_m$). This shows that 
$\rk(\restr{W}{U})=n^2$.

We can use \eqref{E5.12.1} to compute the discriminant of 
$\restr{W}{U}$. Given a basis element $b=\mathbf{x}^\alpha t^j$, 
there is exactly one other basis element $b'$ whose product with 
$b$ has nonzero trace. Moreover $b'$ is given as follows
\begin{align*}
    b'&=\left\{
    \begin{array}{ll}
         1 & \text{if }\alpha=0,j=0 \\
         t^{n-j} &\text{if } \alpha=0,j\ne 0\\
         \mathbf{x}^{\alpha'}&\text{if }\alpha\ne 0,j=0\\
         \mathbf{x}^{\alpha'}t^{n-j}& \text{otherwise}
    \end{array}
    \right.
\end{align*}
where
\begin{align*}
    \alpha_j' &= \left\{
    \begin{array}{ll}
         0 & \text{if }\alpha_j=0\\
         n_j-\alpha_j&\text{otherwise.}
    \end{array}
    \right.
\end{align*}
This shows that the discriminant $d_U$ of $\restr{W}{U}$ is given by 
\begin{align*}
d_U&= c^{n(n-1)} \prod_{j=1}^m a_j^{n(n-n/n_j)}. 
\end{align*}
That is, $d_U=_{(\restr{W}{U})^{\times}} c^{n(n-1)}$. This can be 
obtained from a direct calculation. 

Now consider another open subset $V$ of $X$ where we replace 
the condition $\prod_{i=1}^m a_i\ne 0$ with $\prod_{i=1}^m a'_i\ne 0$
where $a'_i$ is either $a_i$ or $b_i$. By the symmetry of
$(x_i,y_i)$, $V$ is another $U$ after we switch some $x_i$
with $y_i$. Then $d_V$ can be obtained by the same computation
(by replacing the $a_i$'s with $a'_i$'s). Hence 
$d_V=_{(\restr{W}{V})^{\times}} c^{n(n-1)}$. Thus 
the data $\{(V,d_V:=c^{n(n-1)})\}$ (where $V$ ranges over all open 
subsets with $\prod_{i=1}^m a'_i\ne 0$) define a Cartier divisor 
on $X\backslash C$, where $C$ represents the union of subvarieties 
$\{a_i=b_i=0\}$ for $i=1,\ldots,m$. It is clear that $C$ has 
codimension $\geq 2$ in $X$. Just as in the degree one case, since 
$Z$ is an affine CM normal domain, the data $\{(V,c^{n(n-1)}), 
(X,c^{n(n-1)}\}$ extends to a Cartier divisor on all of $X$, so we 
are done by Lemma \ref{zzlem2.2}. 

As in Theorem \ref{zzthm5.11}, combining Lemma \ref{zzlem1.16} and 
Lemma \ref{zzlem5.13} implies that $W$ is reflexive and the second 
consequence follows.
\end{proof}

In some sense, quantum GWAs provide an ideal setting in which to 
study $\cR$-discriminants. This is because the center of a GWA is 
not a polynomial ring, but is reasonably nice and easy to compute. 
Other possible algebras to consider are quantized Weyl algebras and 
quantum matrix algebras.

\begin{question}
\label{zzque5.15}
For which other families of algebras is the $\cR$-discriminant 
computable? Furthermore, for which algebras is the reflexive hull 
of the modified discriminant ideal a principal ideal?
\end{question}

\section{Applications}
\label{zzsec6}

The purpose of this section is to show how to use 
$\cR$-discriminants to study important questions in noncommutative 
algebra. In particular, we consider the Automorphism Problem, the 
Isomorphism Problem, and the Zariski Cancellation Problem for 
quantum GWAs. These results are known in the degree one case and 
so our primary interest is extending them to higher degree quantum GWAs and 
tensor products of quantum GWAs. To avoid some degenerate cases, 
for a quantum GWA $W=\kk[t](\bx,\by,\sigma,h)$ we assume for the 
rest of this section that
\begin{equation}
\label{E6.0.1}\tag{E6.0.1}
h_i\not\in \kk \quad {\text{for every $i$}}
\end{equation}
as in many other papers. Note that \eqref{E6.0.1} implies that 
$W^{\times}=\kk^{\times}$.

\subsection{Automorphisms}
\label{zzsec6.1}
Let $W= \kk[t](x,y,\sigma,h)$ be a degree one quantum GWA. We 
recall the description of $\Aut(W)$ given by Su\'{a}rez--Alvarez 
and Vivas \cite{SAV}. Write $h = \sum_{i=0}^d c_i t^i$, where 
the $c_i \in \kk$ and $d=\deg_t(h)$, and set 
$g = \gcd\{i-j \mid i < j, c_ic_j \neq 0\}$. Let $C_g$ denote 
the subgroup of $\kk^\times$ consisting of $g$th roots of $1$.
If $h$ is a monomial, we make the convention that $g=0$ and 
$C_g=\kk^{\times}$. For each $\gamma \in C_g$ and each 
$\mu \in \kk^\times$, there is an automorphism $\eta_{\gamma,\mu}$ 
of $A$ given by
\[ \eta_{\gamma,\mu}(x) = \mu x, \quad 
\eta_{\gamma,\mu}(y) = \mu\inv \gamma^d y, \quad
\eta_{\gamma,\mu}(t) = \gamma t.\]
Let $G$ denote the subgroup of $\Aut(W)$ consisting of the 
$\eta_{\gamma,\mu}$. When $q=-1$, there is an automorphism 
$\Omega$ of $W$ defined by 
\[ \Omega(x)=y, \quad \Omega(y)=x, \quad \Omega(t)=-t.\]
We recover the following result of Su\'{a}rez--Alvarez and Vivas 
using the $\cR$-discriminant.

\begin{proposition} \cite[Theorem B]{SAV}
\label{zzthm6.1}
Let $q\in \kk$ be a root of unity and $W$ be a degree one quantum 
GWA. If $q^2\neq 1$, then $\Aut(W) \cong G$. If $q=-1$, then 
$\Aut(W) \cong G \rtimes \ZZ/2\ZZ$ where $\ZZ/2\ZZ$ is generated 
by $\Omega$.
\end{proposition}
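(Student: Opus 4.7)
The plan is to use the $\overline{\cR}$-discriminant of $W$ from Theorem \ref{zzthm5.11} as the main rigidity tool, then leverage the $\ZZ$-grading of $W$ (Lemma \ref{zzlem5.2}(3)) and the form of $h$ to pin down every automorphism.

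First, $(W, Z(W))$ satisfies Hypothesis \ref{zzhyp1.10} by Lemma \ref{zzlem5.8}, and $\bar{\csr}(W/Z(W)) =_{W^\times} t^{N}$ with $N = n^{2}(n-1)$ by Theorem \ref{zzthm5.11}. Since $W^\times = \kk^\times$ under the standing assumption \eqref{E6.0.1}, Theorem \ref{zzthm1.11}(2) applied to any $g \in \Aut(W)$ yields $g(t)^{N} = \lambda\, t^{N}$ in $W$ for some $\lambda \in \kk^\times$. Since $W$ is a $\ZZ$-graded domain with degree-zero part $W_{0} = \kk[t]$, $g(t)^{N}$ being homogeneous of degree $0$ forces $g(t) \in \kk[t]$; the equation $g(t)^{N} = \lambda t^{N}$ in the UFD $\kk[t]$ then forces $g(t) = \gamma t$ for some $\gamma \in \kk^\times$.

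With $g(t)$ pinned down, applying $g$ to $xt = qtx$ gives $g(x)t = qt\,g(x)$; decomposing $g(x) = \sum v_{k}$ into $\ZZ$-homogeneous components and using $v_{k} t = q^{k} t v_{k}$ for $v_{k} \in W_{k}$ forces $v_{k} \ne 0 \Rightarrow k \equiv 1 \pmod{n}$. Symmetrically, $g(y)$ is supported on degrees $\equiv -1 \pmod{n}$. Since $g(xy) = h(\gamma t) \in W_{0}$ and $W$ is a graded domain, matching the top and bottom $\ZZ$-graded pieces of $g(x) g(y)$ forces $g(x)$ and $g(y)$ to each be homogeneous of opposite degrees $k^{+}$ and $-k^{+}$. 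Applying the same analysis to $g^{-1}$ yields a homogeneous $g^{-1}(x)$ of degree $l^{+} \equiv 1 \pmod{n}$; composing $g \circ g^{-1}(x) = x$ forces $k^{+} l^{+} = 1$. The only integer solutions are $k^{+} = 1$ (always compatible with $k^{+} \equiv 1 \pmod{n}$) and $k^{+} = -1$ (compatible only when $n = 2$, i.e., $q = -1$).

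In the case $k^{+} = 1$ we have $g(x) = p(t)x$ and $g(y) = r(t)y$ for some $p, r \in \kk[t]$. Invertibility of $g$, via $P(\gamma t) p(t) = 1$ in $\kk[t]$ (where $g^{-1}(x) = P(t)x$), forces $p, r$ to be nonzero constants $\mu, \nu$; comparing coefficients in $\mu\nu\, h(t) = h(\gamma t)$ with $h = \sum c_{i} t^{i}$ yields $\gamma^{i} = \mu\nu$ for every $i$ with $c_{i} \ne 0$, hence $\gamma \in C_{g}$ and $\nu = \gamma^{d}\mu^{-1}$, so $g = \eta_{\gamma, \mu} \in G$. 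In the case $k^{+} = -1$ (only $q = -1$), the analogous calculation produces $g(x) = \mu y$, $g(y) = \nu x$, exhibiting $g$ as the composition of $\Omega$ with some $\eta_{\gamma, \mu}$. Combining the two cases gives $\Aut(W) = G$ when $q^{2} \ne 1$ and $\Aut(W) = G \rtimes \ZZ/2\ZZ$ (generated by $\Omega$) when $q = -1$. The main obstacle is the top/bottom graded-piece argument combined with the invertibility step that pins down $k^{+} = \pm 1$; once this dichotomy is in hand, the rest reduces to routine coefficient-matching in $h(\gamma t) = \mu\nu\, h(t)$.
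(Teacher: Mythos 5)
Your proof is correct and follows essentially the same strategy as the paper's: discriminant rigidity (Theorem \ref{zzthm1.11} combined with Theorem \ref{zzthm5.11}) pins down $\phi(t)=\gamma t$, the $\ZZ$-grading on the domain $W$ forces $\phi(x),\phi(y)$ to be scalar multiples of $x,y$ (or of $y,x$, which is only possible when $q=-1$), and coefficient-matching in $h(\gamma t)=\mu\nu\,h(t)$ finishes. Your minor variations --- using invertibility of $g$ rather than a $\deg_t$ count to force the coefficient polynomials to be constants, and the congruence $k^{+}\equiv 1 \pmod{n}$ together with $k^{+}l^{+}=1$ rather than the algebra-generation argument to force $\deg\phi(x)=\pm 1$ --- are equally valid.
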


\begin{proof}
It follows from \eqref{E6.0.1} that $W^{\times}=\kk^{\times}$. 
Let $\phi \in \Aut(W)$. By Theorem \ref{zzthm1.11}(1), $\phi$ 
fixes the $\cR$-discriminant up to a unit. Hence, 
$\phi(t^{n^2(n-1)}) =_{\kk^\times} t^{n^2(n-1)}$ by Theorem 
\ref{zzthm5.11}. Since $W$ is a ${\mathbb Z}$-graded domain with 
$\deg x=1$, $\deg y=-1$ and $\deg t=0$, $\deg \phi(t) = 0$. Then 
$\restr{\phi}{W_0}$ is an automorphism of $W_0 = \kk[t]$ so 
$\phi(t) = \gamma t$ for some $\gamma \in \kk^\times$.

Observe that $\phi(x)\phi(y) = \phi(xy) = \phi(h)$ is a 
homogeneous element of degree 0. Hence, $\phi(x)$ and $\phi(y)$ 
are homogeneous elements with $\deg \phi(x) = - \deg \phi(y)$. 
Since $\phi(x), \phi(y) ,\phi(t)$ generate $W$ as an algebra, we 
must have $\deg \phi(x) = \pm 1$. So $\phi(x) = \alpha x$ or 
$\phi(x) = \alpha y$ for some $\alpha \in \kk[t]$, $\alpha \neq 0$. 
Similarly, $\phi(y)=\beta y$ or $\phi(y)=\beta x$ for some 
$\beta \in \kk[t]$, $\beta \neq 0$. Thus,
\begin{align}
\label{E6.1.1}\tag{E6.1.1}
\deg_t(xy)=\deg_t(h) = \deg_t(\phi(h)) = \deg_t(\phi(xy)) = 
\deg_t(\alpha)+\deg_t(\beta)+\deg_t(xy),
\end{align}
so $\deg_t(\alpha)=\deg_t(\beta)=0$. Thus, $\alpha,\beta \in 
\kk^\times$. If $\phi(x) = \alpha y$, then $0 = \phi(xt - qtx) 
= \alpha \gamma (1-q^2) yt$, so $q^2 = 1$.

{\bf Case 1:} Assume $\phi(x)=\alpha x$ and $\phi(y)=\beta y$ 
for some $\alpha,\beta \in \kk^\times$. Write 
$h = \sum_{i=0}^d c_i t^i$ with $c_i \in \kk$ and $c_d \neq 0$. 
Then
\[\sum_{i=0}^d \alpha \beta c_i t^i =  (\alpha\beta) h 
= (\alpha\beta) xy = \phi(xy) = \phi(h) 
= \sum_{i=0}^d c_i \gamma^i t^i.\]
This implies that $\alpha\beta=\gamma^i$ for all $i$ such that 
$c_i \neq 0$. In particular, $\alpha\beta = \gamma^d$, so 
$\beta = \alpha\inv \gamma^d$. Now if $i < d$ and $c_i \neq 0$, 
then we have $\gamma^d = \gamma^i$, so $\ord(\gamma) \mid d-i$.
Hence, with $g = \gcd\{ d-i \mid i < d, a_i \neq 0\}$ as above, 
we must have $\gamma^g = 1$. It follows that $\phi = 
\eta_{\gamma,g} \in G$.
 
{\bf Case 2:} Assume $\phi(x)=\alpha y$ and $\phi(y)=\beta x$ 
for some $\alpha,\beta \in \kk^\times$, so that $q=-1$ by
the argument after \eqref{E6.1.1}. Now $\phi \circ \Omega$ 
is of the type in Case 1, so $\phi \circ \Omega = 
\eta_{-\gamma,g}$, whence $\phi = \eta_{-\gamma,g} \circ \Omega$. 
\end{proof}

Proposition \ref{zzthm6.1} extends easily to the degree $\GWAdeg$ 
case, as long as the orders of the automorphisms $\sigma_i$ are 
pairwise coprime.

\begin{proposition}
\label{zzpro6.2}
Let $W=\kk[t](\bx,\by,\sigma,h)$ be a quantum GWA of degree 
$\GWAdeg$. Suppose that each $q_i$ is a root of unity with 
$1<\GWAord_i=|\sigma_i|<\infty$ and $\gcd(\GWAord_i,\GWAord_j)=1$ 
for all $i \neq j$. Let $\phi \in \Aut(W)$. Then for each 
$i$, $\phi$ restricts to an automorphism of the quantum GWA 
subalgebra $W_i = \kk[t](x_i,y_i,\sigma_i,h_i)$. 
\end{proposition}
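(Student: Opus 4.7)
The plan is to first determine $\phi(t)$ via the $\overline{\cR}$-discriminant, then use a lex-extremal argument in the $\ZZ^{\GWAdeg}$-grading together with a divisibility lemma to force $\phi(x_i), \phi(y_i) \in W_i$.

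First, by Theorem~\ref{zzthm5.14} the $\overline{\cR}$-discriminant is $\bar{\csr}(W/Z(W)) =_{W^{\times}} t^{n^2(n-1)}$, and by \eqref{E6.0.1} we have $W^{\times} = \kk^{\times}$. Theorem~\ref{zzthm1.11}(2) therefore gives $\phi(t^{n^2(n-1)}) = \lambda\, t^{n^2(n-1)}$ for some $\lambda \in \kk^{\times}$. Endow $W$ with the $\ZZ^{\GWAdeg}$-grading from Lemma~\ref{zzlem5.2}(3) (in which $\deg t = \mathbf{0}$) and fix a lex order on $\ZZ^{\GWAdeg}$. Since $W$ is a domain and $z^\alpha z^\beta = P_{\alpha,\beta}(t)\, z^{\alpha+\beta}$ with $P_{\alpha,\beta}$ a nonzero polynomial, the lex-extreme terms of any product in $W$ are the products of the lex-extreme terms of its factors; iterating, the lex-extrema of the support of $\phi(t)^N$ are $N$ times those of $\phi(t)$. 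Hence $\phi(t)$ must be homogeneous of $\ZZ^{\GWAdeg}$-degree $\mathbf{0}$, i.e., $\phi(t) \in \kk[t]$, and then $\phi(t)^{n^2(n-1)} = \lambda\, t^{n^2(n-1)}$ in the polynomial ring $\kk[t]$ forces $\phi(t) = \mu t$ for some $\mu \in \kk^{\times}$.

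Applying $\phi$ to $x_i t = q_i t x_i$ and $y_i t = q_i^{-1} t y_i$ gives $\phi(x_i)\, t = q_i\, t\, \phi(x_i)$ and $\phi(y_i)\, t = q_i^{-1}\, t\, \phi(y_i)$. Writing $\phi(x_i) = \sum_\gamma p_\gamma(t)\, z^\gamma$ in $\ZZ^{\GWAdeg}$-homogeneous components, each nonzero component must satisfy $q^\gamma := \prod_k q_k^{\gamma_k} = q_i$; pairwise coprimality of the $\GWAord_k$ and the CRT-isomorphism $\mu_n \cong \prod_k \mu_{n_k}$ force $\gamma_i \equiv 1 \pmod{\GWAord_i}$ and $\gamma_k \equiv 0 \pmod{\GWAord_k}$ for $k \ne i$, with the analogous conclusion (with $\gamma_i \equiv -1$) for $\phi(y_i)$. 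Since $\phi(x_i)\phi(y_i) = h_i(\mu t) \in \kk[t]$ has $\ZZ^{\GWAdeg}$-support $\{\mathbf{0}\}$, the lex-extremal principle applied to the product yields $\alpha^{\max} + \beta^{\max} = \mathbf{0} = \alpha^{\min} + \beta^{\min}$; since max$-$min is lex-$\geq \mathbf{0}$ on each side and the two differences sum to $\mathbf{0}$, both supports are singletons, so $\phi(x_i) = p(t)\, z^{\alpha^\ast}$ and $\phi(y_i) = q(t)\, z^{-\alpha^\ast}$ for some $\alpha^\ast$ satisfying the congruences above.

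The main remaining obstacle is to show $\alpha^\ast_k = 0$ for $k \ne i$. For this I would establish the key lemma: \emph{if $x_i = zw$ in $W$ with $z \in Z(W)$ and $w \in W$, then $z \in \kk^{\times}$.} Its proof again uses the lex-extremal argument to reduce to $z = p(t)\, z^\gamma$ and $w = q(t)\, z^\delta$ monomials with $\gamma + \delta = e_i$; the coefficient of $z^{e_i}$ then equals $p(t)\, q(q^\gamma t)\, P_{\gamma,\delta}(t) = 1$ in $\kk[t]$. If $\gamma \ne 0$, pick a coordinate with $\gamma_k \in \GWAord_k \ZZ \setminus \{0\}$; then $\gamma_k$ and $\delta_k = (e_i)_k - \gamma_k$ have opposite signs, so $P_{\gamma,\delta}$ picks up at least one nonscalar factor of the form $h_k(q_k^j t)$ (nonscalar by \eqref{E6.0.1}), contradicting the scalar equation. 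Hence $\gamma = 0$ and $z \in \kk^{\times}$. Now decompose $z^{\alpha^\ast} = C \cdot z_i^{\alpha^\ast_i}$ with $C = \prod_{k \ne i} z_k^{\alpha^\ast_k} \in Z(W)$ and $z_i^{\alpha^\ast_i} \in W_i$. Applying $\phi^{-1}$ to $\phi(x_i) = C \cdot (p(t)\, z_i^{\alpha^\ast_i})$ and applying the key lemma to the resulting factorization $x_i = \phi^{-1}(C)\, \phi^{-1}(p(t)\, z_i^{\alpha^\ast_i})$ forces $\phi^{-1}(C) \in \kk^{\times}$ and hence $C \in \kk^{\times}$. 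Thus $\alpha^\ast_k = 0$ for all $k \ne i$, so $\phi(x_i) = p(t)\, z_i^{\alpha^\ast_i} \in W_i$. Similarly $\phi(y_i) \in W_i$, and combined with $\phi(t) = \mu t \in W_i$ this gives $\phi(W_i) \subseteq W_i$. Applying the whole argument to $\phi^{-1}$ yields the reverse inclusion, hence $\phi(W_i) = W_i$.
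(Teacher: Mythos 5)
Your proof is correct, and it reaches the conclusion by a genuinely different route in the decisive step. The paper's proof (which leans on the argument of Proposition \ref{zzthm6.1}) gets $\phi(t)=\gamma t$ from the discriminant exactly as you do, but then asserts from the $\ZZ^{\GWAdeg}$-grading that $\phi(x_i)=\alpha_i x_j$ or $\alpha_i y_j$ for a \emph{single} index $j$ and scalar $\alpha_i$, using a $\deg_t$ count as in \eqref{E6.1.1} (one needs $\deg_t h_i \geq \sum_k |\gamma_k|\deg_t h_k$ together with the fact that the degree vectors of the $\phi(x_i)$ form a matrix in $\GL_{\GWAdeg}(\ZZ)$ to force a signed permutation), and only afterwards uses $\phi(x_i)t=q_it\phi(x_i)$ to pin down $j=i$. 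You invert this order: you first extract the congruences $\alpha^\ast_i\equiv 1\ (\GWAord_i)$, $\alpha^\ast_k\equiv 0\ (\GWAord_k)$ from the commutation with $t$ via coprimality of the orders, and then kill the extraneous coordinates with your key lemma that a central divisor of $x_i$ is a unit, applied through $\phi^{-1}$. Both routes are sound. Yours has the virtue of making fully explicit the step that the paper leaves as ``using the $\ZZ^n$-grading'' (in particular it cleanly rules out $\phi(x_i)$ involving several generator pairs, which is the subtle point in degree $>1$), at the cost of a slightly weaker intermediate normal form ($\phi(x_i)=p(t)z_i^{a}$ with $a\equiv 1 \pmod{\GWAord_i}$ rather than $\alpha_ix_i^{\pm1}$ with $\alpha_i\in\kk^\times$); that weaker form is all the proposition needs, and the precise form is then recoverable from Proposition \ref{zzthm6.1} applied to $\restr{\phi}{W_i}$, or from the $\deg_t$ count you did not need to invoke. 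Two small points to make explicit when writing this up: the nonvanishing of $h_k(q_k^j t)$ as a \emph{nonconstant} factor in your key lemma uses \eqref{E6.0.1}, and the case $k=i$ of that lemma uses $\GWAord_i>1$ so that $\gamma_i\in\GWAord_i\ZZ\setminus\{0\}$ forces $|\gamma_i|\geq 2$ and hence opposite signs for $\gamma_i$ and $\delta_i=1-\gamma_i$; both hypotheses are indeed in force here.
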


\begin{proof}
By Theorem \ref{zzthm5.14} and similar to the proof of 
Proposition \ref{zzthm6.1}, $\phi(t)=\gamma t$ for some 
$\gamma \in \kk$. For $i \in \{1,\hdots,n\}$, we have 
$\phi(x_iy_i)=\phi(h_i)$ and $\deg(h_i)=\bzero$. Using 
the $\ZZ^n$-grading on $W$, we have that there exists 
$j \in \{1,\hdots,n\}$ such that
\[
\text{(1)}~~\phi(x_i)=\alpha_i x_j, \phi(y_i)=\beta_i y_j 
\quad\text{or}\quad
\text{(2)}~~\phi(x_i)=\alpha_i y_j, \phi(y_i)=\beta_i x_j.
\]

Suppose we are in case (1). A similar argument holds in 
case (2). Then
\[ 0 = \phi(x_it-q_itx_i) = \alpha_i\gamma(q_j-q_i)tx_j.\]
By our hypothesis on the $q_i$, we have $q_i=q_j$. Hence, 
$j=i$. This shows that $\phi$ restricted to $W_i$ is of the 
form $\eta_{\gamma,g}$ or, when $q=-1$, possibly 
$\eta_{\gamma,g} \circ \Omega$.
\end{proof}

Let $W_1,\hdots,W_k$ be a collection of degree one quantum GWAs 
with canonical generators $\{x_i,y_i,t_i\}$ and parameters 
$\{q_i,h_i\}$. Set $A = W_1 \tensor \cdots \tensor W_k$. Note 
that $A$ is a degree $k$ GWA with base ring 
$\kk[t_1] \tensor \cdots \tensor \kk[t_k]$.
The next proposition should be compared to \cite[Theorem C]{LY} 
in the context of quantized Weyl algebras.

\begin{proposition}
\label{zzpro6.3}
Let $A = W_1 \tensor \cdots \tensor W_k$ as above. Assume each 
$q_i$ is a root of unity with $q_i^2 \neq 1$. Also assume 
$\deg_{t_i} h_i \geq 2$ for all $i$. If $\phi \in \Aut(A)$, 
then the following hold:
\begin{enumerate}
\item[(1)] 
There exists $\tau \in {\mathbb S}_k$ such that 
$\phi(W_i)=W_{\tau(i)}$ for all $i=1,\hdots,k$.
\item[(2)] 
There exists scalars 
$\alpha_1,\beta_1,\hdots,\alpha_k,\beta_k \in \kk^\times$ 
and a sequence $\{\epsilon_1,\hdots,\epsilon_k\} \in \{\pm 1\}^k$ 
such that for each $i=1,\hdots,k$,
\begin{align*}
&\phi(x_i)=\alpha_i x_{\tau(i)}, 
 \quad \phi(y_i)=\beta_i y_{\tau(i)}, 
 \quad \text{if $\epsilon_i=1$,} \\
&\phi(x_i)=\alpha_i y_{\tau(i)}, 
 \quad \phi(y_i)=\beta_i x_{\tau(i)}, 
 \quad \text{if $\epsilon_i=-1$}.
\end{align*}
Moreover, there exists scalars $\gamma_1,\hdots,\gamma_k 
\in \kk^\times$ such that 
\[ h_i(\gamma_i t) = \begin{cases}
\alpha_i\beta_i h_{\tau(i)}(t) & \text{if $\epsilon_i=1$} \\
\alpha_i\beta_i h_{\tau(i)}(q\inv t) & \text{if $\epsilon_i=-1$}.
\end{cases}\]
\end{enumerate}
\end{proposition}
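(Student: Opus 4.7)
The plan is to use the $\overline{\cR}$-discriminant of $A$ to control $\phi$ on the base variables $t_i$, and then to use the GWA relations to deduce the action on the $x_i, y_i$. First I would iterate Theorem~\ref{zzthm1.14} with Theorem~\ref{zzthm5.11} as the base case to compute
$$\bar{\csr}(A/Z(A)) =_{A^\times} \prod_{i=1}^k t_i^{n^2(n_i-1)},$$
where $n_i := |\sigma_i|$ and $n := \prod_j n_j$ (using that $\rk(W_j/Z(W_j)) = n_j^2$). Since $A^\times = \kk^\times$ by \eqref{E6.0.1}, Theorem~\ref{zzthm1.11}(2) then gives
$$\prod_i \phi(t_i)^{n^2(n_i-1)} = u \prod_i t_i^{n^2(n_i-1)}$$
for some $u \in \kk^\times$.

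The heart of the argument is to show $\phi(t_i) = \gamma_i t_{\tau(i)}$ for some permutation $\tau \in {\mathbb S}_k$ and scalars $\gamma_i \in \kk^\times$. I would equip $A$ with its ${\mathbb Z}^k$-grading (Lemma~\ref{zzlem5.2}(3)) and exploit that $A$ is a graded domain so that each $\phi(t_i)$ is ${\mathbb Z}^k$-homogeneous, say of degree $\mathbf{d}_i$, with $\sum_i n^2(n_i-1)\mathbf{d}_i = \bzero$. Writing $\phi(t_i) = f_i z^{\mathbf{d}_i}$ for $f_i \in A_0 := \kk[t_1,\ldots,t_k]$ and expanding the product via the relations $x_\ell y_\ell = h_\ell(t_\ell)$, every coordinate $\ell$ where some $(\mathbf{d}_i)_\ell \neq 0$ introduces $h_\ell$-factors into the expansion, each of $t_\ell$-degree at least $2$ by hypothesis. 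Comparing $t_\ell$-degrees against the right-hand side, combined with the commutativity constraint $\phi(t_i)\phi(t_j) = \phi(t_j)\phi(t_i)$ (which, given $q_i^2 \neq 1$ and $\deg h_i \geq 2$, rules out nontrivial mixed configurations), should force all $\mathbf{d}_i = \bzero$. Hence $\phi(t_i) \in A_0$, and since $A_0$ is a polynomial UFD, the discriminant equation and invertibility of $\phi|_{A_0}$ force the associated nonnegative integer exponent matrix and its inverse to both be nonnegative integer, hence a permutation matrix, yielding $\phi(t_i) = \gamma_i t_{\tau(i)}$.

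With this in hand, applying $\phi$ to $x_i t_j = q_i^{\delta_{ij}} t_j x_i$ and $y_i t_j = q_i^{-\delta_{ij}} t_j y_i$ places $\phi(x_i)$ and $\phi(y_i)$ in specific simultaneous eigenspaces of the $\mathrm{ad}(t_{\tau(j)})$'s. A similar degree-counting argument applied to $\phi(x_i)\phi(y_i) = h_i(\gamma_i t_{\tau(i)})$, using $\deg h_i \geq 2$, should confine both $\phi(x_i)$ and $\phi(y_i)$ to the single tensor factor $W_{\tau(i)}$. The eigenvalue condition $\phi(x_i)t_{\tau(i)} = q_i t_{\tau(i)}\phi(x_i)$ together with $q_i^2 \neq 1$ then forces either $q_i = q_{\tau(i)}$ with $\phi(x_i) = \alpha_i x_{\tau(i)}$ and $\phi(y_i) = \beta_i y_{\tau(i)}$ ($\epsilon_i = 1$), or $q_{\tau(i)} = q_i^{-1}$ with $\phi(x_i) = \alpha_i y_{\tau(i)}$ and $\phi(y_i) = \beta_i x_{\tau(i)}$ ($\epsilon_i = -1$). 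Finally, applying $\phi$ to $x_iy_i = h_i(t_i)$ and using $x_{\tau(i)}y_{\tau(i)} = h_{\tau(i)}(t_{\tau(i)})$ or $y_{\tau(i)}x_{\tau(i)} = h_{\tau(i)}(q_{\tau(i)}^{-1}t_{\tau(i)})$ yields the stated polynomial identity relating $h_i$ and $h_{\tau(i)}$.

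The main obstacle is showing that the ${\mathbb Z}^k$-degrees $\mathbf{d}_i$ all vanish. The hypothesis $\deg h_i \geq 2$ is essential because otherwise the $h_\ell$-factors produced in the expansion of $\prod \phi(t_i)^{n^2(n_i-1)}$ could fit within the $t_\ell$-degree budget on the right-hand side; the hypothesis $q_i^2 \neq 1$ is essential to rule out ``swap-like'' degree configurations compatible with the commutativity constraint on the $\phi(t_i)$'s. Once these vanishings are established, the remainder of the argument is routine eigenvalue bookkeeping with the GWA relations.
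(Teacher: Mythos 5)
Your proposal is correct and follows essentially the same route as the paper's proof: compute $\csr(A/Z)$ as a monomial $\prod_i t_i^{N_i}$ via the tensor-product theorem, use its invariance under $\phi$ and the $\ZZ^k$-grading to make each $\phi(t_i)$ homogeneous, kill the nonzero $\ZZ^k$-degrees by the same degree count (the $h_\ell$-factors of $t_\ell$-degree $\geq 2$ arising from pairing $x_\ell$ with $y_\ell$ overrun the $t_\ell$-degree of $d$; the paper orders $N_1\geq\cdots\geq N_k$ and inducts instead of invoking commutativity of the $\phi(t_i)$), and finish with the same relation bookkeeping. One small discrepancy: the paper does not use $q_i^2\neq 1$ in the degree-vanishing step, only afterwards to force $q_{\tau(i)}=q_i^{\pm 1}$.
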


\begin{proof}
Let $\okk$ be the algebraic closure of $\kk$. It is clear that we 
only need to show the statements for $A\otimes_{\kk}\okk$. In 
other words, we may assume $\kk$ is algebraically closed without 
loss of the generality. Let $Z$ be the center of $A$.

By Lemma \ref{zzlem5.6}, each $W_i$ is Auslander--Gorenstein and 
GK--Macaulay. Hence, $A$ is as well. Similarly, each $W_i$ is 
module-finite over its center, so $A$ is CM over $Z$. Together 
with Lemma \ref{zzlem5.8} and Theorem \ref{zzthm5.11} we may now 
apply Theorem \ref{zzthm2.5}.

By induction and Theorem \ref{zzthm2.5}, the $\cR$-discriminant 
of $A/Z$ is $\csr(A/Z)=t_1^{N_1} \cdots t_k^{N_k}=:d$ for some 
positive integers $N_i$. Let $\phi \in \Aut(A)$. By 
Theorem~\ref{zzthm1.11}, $\phi(d) =_{W^\times} d$ and since 
$W^\times = \kk^\times$, therefore $\phi(d)$ has degree $\bzero$ 
in the ${\mathbb Z}^k$-graded domain $A$. Since $\ZZ^k$ can be 
given a total order, this implies that each $\phi(t_i)$ is 
homogeneous of some $\ZZ^k$-grading. We claim that 
$\phi(t_i)=\gamma_i t_{\tau(i)}$ for some $\gamma_i \in \kk^\times$ 
and some $\tau \in {\mathbb S}_k$. 

After reordering, we may assume $N_1 \geq N_2 \geq \cdots \geq N_k$. 
Suppose $x_i^p$ is a factor of $\phi(t_1)$ for some 
$i \in \{1,\hdots,k\}$ and some $p>0$. Hence, $x_i^{pN_1}$ is a 
factor of $\phi(d)$. Since $\phi(d)$ has degree $\bzero$ in the 
$\ZZ^k$-grading, then it follows that $y_i^{pN_1}$ is a factor 
of $\phi(d)$. Then $(x_i^py_i^p)^{N_1}$ is a factor of $\phi(d)$. 
As $A_\bzero=\kk[t_1,\ldots,t_k]$ is a domain and $d$ is 
homogeneous in the $t_i$-grading, then $h_i=t_i^q$ for some 
$q\geq 2$. This implies $d$ has $t_i$-degree at least $pqN_1 > N_1$ 
in $d$, a contradiction. It follows that $\phi(t_1)=\gamma_1 t_i$ 
for some $i$. In this case we also obtain that $N_i=N_1$. The claim 
now follows by induction.

By the claim just proved above, $\phi$ preserves the polynomial 
subring $A_\bzero:=\kk[t_1,\ldots,t_k]$. Now,
\begin{equation}
\label{E6.3.1}\tag{E6.3.1}
h(\gamma_i t_{\tau(i)}) = h(\phi(t_i)) 
= \phi(h(t_i)) = \phi(x_iy_i) = \phi(x_i)\phi(y_i).
\end{equation}
It follows from the $\ZZ^k$-grading that 
$\deg(\phi(x_i))=\pm\be_{\tau(i)}$ for some 
$\tau\in {\mathbb S}_k$ and $\deg(\phi(x_i))=-\deg(\phi(y_i))$.
Furthermore, an argument as in \eqref{E6.1.1} show that 
$\phi(W_i)=W_{\tau(i)}$. Now it is straightforward to
fill out all details. 
\end{proof}

\subsection{Isomorphisms}
\label{zzsec6.2}
First we reprove one case of \cite[Theorem A]{SAV}.

\begin{proposition} \cite[Theorem A]{SAV}
\label{zzpro6.4}
Let $W=\kk[t](x,y,\sigma,h)$ and $W'=\kk[T](X,Y,\sigma',H)$ be 
degree one quantum GWAs where $\sigma(t)=qt$ and $\sigma'(T)=q'T$. 
Assume $q,q' \neq 1$ are roots of unity. If $\Phi:W \to W'$ is an 
isomorphism, then $q'=q^{\pm 1}$ and there exists 
$\gamma,\mu \in \kk^\times$ such that
\[ h(\gamma t) = \begin{cases}
	\mu H(T) & \text{ if $q'=q$} \\
	\mu H(q\inv T) & \text{ if $q'=q\inv$}.
\end{cases}\]
Moreover, if $q'=q^{\pm 1}$ and there exist $\gamma,\mu$ 
satisfying the above condition, then $W \iso W'$.
\end{proposition}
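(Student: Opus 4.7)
The plan is to adapt the reflexive hull discriminant argument of Proposition~\ref{zzthm6.1} to the isomorphism setting. By Lemma~\ref{zzlem5.8} and Theorem~\ref{zzthm5.11}, one has $\bar{\csr}(W/Z(W)) =_{W^{\times}} t^{n^2(n-1)}$ and $\bar{\csr}(W'/Z(W')) =_{(W')^{\times}} T^{(n')^2(n'-1)}$, where $n=|\sigma|$ and $n'=|\sigma'|$. Since the modified discriminant ideal and the reflexive hull operation are both natural with respect to algebra isomorphisms, the evident isomorphism analog of Theorem~\ref{zzthm1.11}(2) yields $\Phi(t)^{n^2(n-1)} = cT^{(n')^2(n'-1)}$ for some $c \in \kk^\times$ (using $(W')^\times = \kk^\times$ from \eqref{E6.0.1}). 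Viewing $W'$ as a $\ZZ$-graded domain with $\deg T=0$, $\deg X=1$, $\deg Y=-1$, a standard analysis of the lowest- and highest-degree homogeneous components of $(\Phi(t))^{n^2(n-1)}$ forces $\Phi(t) \in W'_0 = \kk[T]$. Factoring in the UFD $\kk[T]$ gives $\Phi(t) = \gamma T^a$ for some $\gamma \in \kk^\times, a \geq 1$; applying the same argument to $\Phi^{-1}$ and using $t = \Phi^{-1}(\Phi(t))$ forces $a = 1$, so $\Phi(t) = \gamma T$ and $n^2(n-1) = (n')^2(n'-1)$, which implies $n = n'$.

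With $\Phi(t) = \gamma T$ in hand, I would mimic the analysis from the proof of Proposition~\ref{zzthm6.1}. Since $\Phi(x)\Phi(y) = \Phi(h(t)) = h(\gamma T) \in W'_0$, and $\Phi(t), \Phi(x), \Phi(y)$ generate $W'$, the $\ZZ$-grading forces $\deg \Phi(x) = -\deg \Phi(y) = \pm 1$. This splits into two cases: (a) $\Phi(x) = \alpha(T)X, \Phi(y) = \beta(T)Y$, or (b) $\Phi(x) = \alpha(T)Y, \Phi(y) = \beta(T)X$, with $\alpha,\beta \in \kk[T]$. Applying $\Phi^{-1}$ to recover $x$ and $y$ then forces $\alpha, \beta$ to be units in $\kk[T]$, that is, scalars $\mu_1, \mu_2 \in \kk^\times$. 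The commutation relation $xt = qtx$ next pins down $q' = q$ in case (a) and $q' = q^{-1}$ in case (b). Finally, expanding the identity $\Phi(xy) = h(\gamma T)$ in each case, using $XY = H(T)$ or $YX = (\sigma')^{-1}(H(T))$ as appropriate, yields the claimed polynomial identity relating $h$ and $H$.

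For the converse, given the polynomial identity and the corresponding value of $q'$, I would define $\Phi$ on generators by $\Phi(t) = \gamma T$ together with $\Phi(x) = X, \Phi(y) = \mu Y$ in case (a) or $\Phi(x) = Y, \Phi(y) = \mu X$ in case (b), and directly verify that the defining GWA relations are preserved, which reduces to the given polynomial identity. The main obstacle will be the careful bookkeeping in case (b): the swap $x \leftrightarrow y$ interacts with the equality $\sigma' = \sigma^{-1}$, producing a $q$-shift via $YX = (\sigma')^{-1}(H(T))$, and tracking this shift precisely so as to match the stated form $\mu H(q^{-1} T)$ in the proposition requires care with the various conventions. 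A minor supporting step is making the isomorphism version of Theorem~\ref{zzthm1.11}(2) fully rigorous, which follows from the naturality of the trace map and the reflexive hull construction with respect to algebra isomorphisms of centered pairs $(A, Z(A)) \to (A', Z(A'))$.
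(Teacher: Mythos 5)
Your proposal is correct and follows essentially the same route as the paper, which itself just invokes ``an argument as in Proposition \ref{zzthm6.1}'' (discriminant invariance plus the $\ZZ$-grading) and then reads off $q'=q^{\pm1}$ from $\Phi(xt-qtx)=0$ and the polynomial identity from $\Phi(xy)=h(\gamma T)$. Your extra care with the isomorphism-specific points --- transporting the $\cR$-discriminant along $\Phi$ rather than a mere automorphism, and using $\Phi^{-1}$ to force $\Phi(t)=\gamma T$ and $\alpha,\beta\in\kk^\times$ (where the automorphism proof's degree count \eqref{E6.1.1} implicitly uses $h=H$) --- is exactly the detail the paper suppresses, and the $q$-shift in case (b) is indeed only a harmless rescaling of $\gamma$.
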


\begin{proof}
It is easy to verify that an isomorphism $W \to W'$ exists under 
these conditions. So assume $\Phi:W \to W'$ is an isomorphism.
An argument as in Proposition \ref{zzthm6.1} shows that 
$\Phi(t)=\gamma T$ for some $\gamma \in \kk^\times$, and that 
either $\Phi(x)=\alpha X$ and $\Phi(y)=\beta Y$, or else 
$\Phi(x)=\alpha Y$ and $\Phi(y)=\beta X$ for some 
$\alpha,\beta \in \kk^\times$. In the first case, we have 
\[ 0 = \Phi(xt-qtx) = \alpha\gamma(XT-qTX) 
= \alpha\gamma(q'-q)TX,\]
so $q'=q$, and
\[  h(\gamma t) = \Phi(h(t)) = \Phi(xy) = 
\Phi(x)\Phi(y) = (\alpha\beta)XY = (\alpha\beta)h(T).\]
The second case is similar.
\end{proof}

Now we extend this to higher degree quantum GWAs.

\begin{proposition}
\label{zzpro6.5}
Let $W=\kk[t](\sigma,h)$ and $W'=\kk[T](\sigma',H)$ be 
quantum GWAs of degree $\GWAdeg$ and $\GWAdeg'$, respectively, 
with parameters $(q_1,\hdots,q_{\GWAdeg})$ and 
$(q_1',\hdots,q_{\GWAdeg'}')$, respectively, such that 
$q_i^2, (q_i')^2 \neq 1$ for all $i$. Suppose both $W$ and $W'$ 
satisfy the hypothesis of Proposition \ref{zzpro6.2}. If 
$\phi:W \to W'$ is an isomorphism, then $\GWAdeg=\GWAdeg'$ and 
there exists $\tau \in {\mathbb S}_\GWAdeg$ such that 
$W_i \iso W_{\tau(i)}'$.
\end{proposition}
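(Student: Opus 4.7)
The approach combines the strategies of Propositions \ref{zzpro6.2} and \ref{zzpro6.4} with a $\GKdim$ argument. Since $\GKdim$ is invariant under algebra isomorphism, Lemma~\ref{zzlem5.4} gives $1+\GWAdeg = \GKdim W = \GKdim W' = 1+\GWAdeg'$, hence $\GWAdeg=\GWAdeg'$. Set $n=\prod_i n_i$ and $n'=\prod_i n_i'$. By Theorem~\ref{zzthm5.14}, $\csr(W/Z(W))=_{Z(W)^\times} t^{n^2(n-1)}$ and $\csr(W'/Z(W'))=_{Z(W')^\times} T^{(n')^2(n'-1)}$. The $\cR$-discriminant is invariant under isomorphism (an immediate extension of Theorem~\ref{zzthm1.11}(1), since $\phi$ sends modified discriminant ideals to modified discriminant ideals), so $\phi(t^{n^2(n-1)})=_{(W')^\times} T^{(n')^2(n'-1)}$. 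Using that $W'$ is a $\ZZ^\GWAdeg$-graded domain (Lemma~\ref{zzlem5.2}) with degree-$\bzero$ part $\kk[T]$ and $(W')^\times=\kk^\times$ by \eqref{E6.0.1}, $\phi(t)$ must be homogeneous of degree $\bzero$; degree comparison then yields $\phi(t)=\gamma T$ for some $\gamma\in\kk^\times$ and $n=n'$.

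Next I would show that each $\phi(x_i)$ and $\phi(y_i)$ is $\ZZ^\GWAdeg$-homogeneous of opposite degrees $\pm\alpha_i$. The relation $\phi(x_i)T=q_iT\phi(x_i)$ confines the $\ZZ^\GWAdeg$-support of $\phi(x_i)$ to the coset $\lambda^{-1}(q_i)$ of $L:=\bigoplus_j n_j'\ZZ\be_j$, where $\lambda(\alpha):=\prod_j (q_j')^{\alpha_j}$. Writing homogeneous decompositions $\phi(x_i)=\sum_\alpha f_\alpha\mu_\alpha$ and $\phi(y_i)=\sum_\beta g_\beta\mu_\beta$ in $W'$ (with $\mu_\gamma$ the standard degree-$\gamma$ monomial), the identity $\phi(x_i)\phi(y_i)=h_i(\gamma T)\in\kk[T]$ must sit in degree $\bzero$. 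A pair-by-pair cancellation analysis of the off-diagonal graded components of this product, using that $W'$ is a graded domain, then collapses both $\phi(x_i)$ and $\phi(y_i)$ to single homogeneous summands.

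Finally, surjectivity of $\phi$ forces $\{\alpha_1,\ldots,\alpha_\GWAdeg\}$ to be a $\ZZ$-basis of $\ZZ^\GWAdeg$, and its image in $\ZZ^\GWAdeg/L\cong\bigoplus_j \ZZ/n_j'\ZZ$ (by CRT, using coprimality of the $n_j'$) consists of $\GWAdeg$ elements of orders $n_1,\ldots,n_\GWAdeg$ generating the whole group. To extract the permutation $\tau\in\mathbb{S}_\GWAdeg$ I would invoke the pairwise commutativity $[\phi(x_i),\phi(x_k)]=0$---which yields functional equations tying the $f_i$'s, $\alpha_i$'s, and $q_i$'s together---combined with the unique coprime decomposition of each $n_i$ into factors dividing the $n_j'$'s, to conclude that each $\alpha_i\bmod L$ is concentrated on a single coordinate $\be_{\tau(i)}$ with $n_i=n_{\tau(i)}'$; the hypothesis $q_i^2\neq 1$ rules out the swap $x_i\leftrightarrow y_i$. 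Hence $\phi$ restricts to an isomorphism $W_i\to W_{\tau(i)}'$. The main obstacle is precisely this last step, since the corresponding purely group-theoretic statement can fail (e.g.\ $\ZZ/30\cong \ZZ/6\oplus\ZZ/5\cong\ZZ/10\oplus\ZZ/3$ exhibit two genuinely different coprime decompositions of the same cyclic group); ruling out mixed-coordinate supports must therefore draw on the full multiplicative structure of $W'$ via the commutation relations among the $\phi(x_i)$'s.
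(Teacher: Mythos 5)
Your opening steps coincide with the paper's: Gelfand--Kirillov dimension gives $\GWAdeg=\GWAdeg'$, invariance of the $\cR$-discriminant (Theorems \ref{zzthm1.11} and \ref{zzthm5.14}) plus the graded-domain argument gives $\phi(t)=\gamma T$, and the total-order trick on $\ZZ^{\GWAdeg}$ applied to $\phi(x_i)\phi(y_i)=h_i(\gamma t)\in\kk[T]$ shows each $\phi(x_i)$, $\phi(y_i)$ is homogeneous of degree $\pm\delta_i$. The problem is the final step, and you have correctly diagnosed but not closed it: knowing that $\{\delta_1,\dots,\delta_{\GWAdeg}\}$ is a $\ZZ$-basis and that $\lambda(\delta_i)=q_i$ only pins down $\delta_i$ modulo $L$, and (as your $\ZZ/30$ example shows) no amount of group theory in $\ZZ^{\GWAdeg}/L$ will force $\delta_i=\pm\be_{\tau(i)}$. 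The commutators $[\phi(x_i),\phi(x_k)]=0$ are a plausible source of extra constraints, but you give no argument that they rule out mixed supports, and you explicitly concede that this is the unresolved obstacle. As written, the proposal therefore does not prove the proposition. A second, unaddressed point: even granting $\phi(x_i)=f_i(T)X_{\tau(i)}$, you must show $f_i\in\kk^\times$ before $\restr{\phi}{W_i}$ can be an isomorphism onto $W'_{\tau(i)}$ (this is the analogue of \eqref{E6.1.1}).

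The missing idea is a $T$-degree count run in both directions. Write $\phi(x_i)=f_i(T)z^{\delta_i}$ and $\phi(y_i)=g_i(T)z^{-\delta_i}$. Since $z^{\delta}z^{-\delta}$ is a product of $|\delta^{(j)}|$ twists of $H_j$ over all $j$, the identity $\phi(x_i)\phi(y_i)=h_i(\gamma T)$ gives
$\deg_T h_i=\deg f_i+\deg g_i+\sum_j |\delta_i^{(j)}|\deg_T H_j$, hence $\deg_T h_i\ge\sum_j |\delta_i^{(j)}|\deg_T H_j$. Writing $\phi^{-1}(X_j)$ similarly with degree vectors $\epsilon_j$ yields $\deg_T H_j\ge\sum_k|\epsilon_j^{(k)}|\deg_T h_k$. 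Since the matrices with rows $\delta_i$ and $\epsilon_j$ are mutually inverse over $\ZZ$, the product of the entrywise absolute-value matrices dominates the identity entrywise; combining with the two inequalities and $\deg_T h_i,\deg_T H_j\ge 1$ (assumption \eqref{E6.0.1}) forces all inequalities to be equalities, so $(|\delta_i^{(j)}|)$ is a permutation matrix and $\deg f_i=\deg g_i=0$. This gives $\delta_i=\pm\be_{\tau(i)}$, scalar coefficients, and $W_i\iso W'_{\tau(i)}$ via Proposition \ref{zzpro6.4} in one stroke; it is also the content hiding behind the paper's one-line assertion ``Hence, there is some index $j$ such that $\phi(x_i)=\alpha_iX_j$\dots''. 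Note that the sign ambiguity ($x_i\mapsto$ a multiple of $Y_{\tau(i)}$) need not be excluded here --- contrary to what you suggest, $q_i^2\ne1$ does not rule it out for an isomorphism, and Proposition \ref{zzpro6.4} absorbs both cases.
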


\begin{proof}
By Lemma \ref{zzlem5.4}, 
$\GWAdeg+1 = \GKdim W = \GKdim W' = \GWAdeg'+1$, so 
$\GWAdeg=\GWAdeg'$. Denote the canonical generators of $W$ 
by $x_1,y_1,\hdots,x_\GWAdeg,y_\GWAdeg$ and those of $W'$ 
by $X_1,Y_1,\hdots,X_\GWAdeg,Y_\GWAdeg$. Let $\phi:W \to W'$ 
be the given isomorphism, then using similar arguments to the 
above we have $\phi(t)=\gamma T$ for some $\gamma \in \kk^\times$. 
Then $\phi(x_i)\phi(y_i)$ has degree $\bzero$ for each $i$. Hence, 
there is some index $j$ such that $\phi(x_i) = \alpha_i X_j$ and 
$\phi(y_i) = \beta_i Y_j$, or else 
$\phi(x_i) = \alpha_i Y_j$ and $\phi(y_i) = \beta_i X_j$. 
We now refer to Proposition \ref{zzpro6.4} for a 
description of the isomorphism $W_i \to W_j$.
\end{proof}

\subsection{Cancellation}
\label{zzsec6.3}
The Zariski cancellation problem (ZCP) asks whether an algebra 
isomorphism $A[x] \iso B[x]$ implies the existence of an algebra 
isomorphism $A \iso B$; if so, then $A$ is called \emph{cancellative}
[Definition \ref{zzdef0.7}]. Solving the ZCP for various classes of 
noncommutative algebras has attracted much recent interest 
\cite{BHHV,BZ2,GXW,LWZ1,LWZ2}. In this subsection we use 
$\cR$-discriminants to prove that degree $m$ quantum GWAs, as well 
as their tensor products, are cancellative. For simplicity, we 
assume that ${\text{char}}\; \kk=0$ in this subsection.

Makar-Limanov showed that the cancellation property is inherently 
tied to the study of locally nilpotent derivations \cite{ML}. We we 
denote the set of locally nilpotent derivations of an algebra $A$ 
by $\LND(A)$. We say $A$ is \emph{LND-rigid} if $\LND(A)=\{0\}$ 
\cite[p.1711]{BZ2}.

A degree one quantum GWA $W$ is cancellative by 
\cite[Corollary 3.7 (2)]{BZ2}. Alternatively, one recovers this 
result by combining \cite[Theorem 3.6]{BZ2} and \cite[Lemma 2.1]{SAV}.

\begin{theorem}
\label{zzthm6.6}
Let $W=\kk[t](\bx,\by,\sigma,h)$ be a quantum GWA of degree 
$\GWAdeg$ and $Z$ its center. 
\begin{enumerate}
\item[(1)]
Suppose that each $q_i$ is a root of unity with 
$\GWAord_i=|\sigma_i|<\infty$, $\gcd(\GWAord_i,\GWAord_j)=1$ for 
all $i \neq j$, and $n_i>1$ for at least one $i$. Then $W$ is 
cancellative. 
\item[(2)]
Let $A$ be the tensor product of finitely many algebras in
part {\rm{(1)}}. Then $A$ is cancellative.
\end{enumerate}
\end{theorem}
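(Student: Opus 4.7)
The plan is to deduce cancellation from the $\cR$-discriminant computation via a result in the spirit of \cite[Theorem 3.6]{BZ2} (the same framework that handled the degree-one case via \cite[Corollary 3.7(2)]{BZ2}).

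For part (1), I would begin with Theorem~\ref{zzthm5.14}, which gives $\csr(W/Z) =_{Z^{\times}} t^{n^{2}(n-1)}$ with $n = \prod_{i} \GWAord_{i} > 1$ by hypothesis. Hypothesis~\eqref{E6.0.1} ensures $W^{\times} = \kk^{\times}$, so by Theorem~\ref{zzthm1.11}(3), any locally nilpotent derivation $\delta$ of $W$ satisfies $\delta(t^{n^{2}(n-1)}) = 0$. Since $\mathrm{char}\,\kk = 0$ and $W$ is a domain, this forces $\delta(t) = 0$. The next step is to use the $\ZZ^{\GWAdeg}$-grading of Lemma~\ref{zzlem5.2}(3): the relation $x_{i} t = q_{i} t x_{i}$ propagates through $\delta$ to force $\delta(x_{i}) t = q_{i} t \delta(x_{i})$, so each $\ZZ^{\GWAdeg}$-homogeneous component of $\delta(x_{i})$ must satisfy a specific $t$-commutation relation. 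The pairwise coprimality of the $\GWAord_{i}$ together with local nilpotency then restrict $\delta(x_{i})$ and $\delta(y_{i})$ enough to conclude $\delta = 0$, so $W$ is LND-rigid.

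To promote LND-rigidity to cancellation, I would run the identical argument on any polynomial extension $W[z_{1},\ldots,z_{k}]$. Applying Theorem~\ref{zzthm2.5} to $W \otimes \kk[z_{1},\ldots,z_{k}]$ confirms that the $\cR$-discriminant of the extension remains $t^{n^{2}(n-1)}$ up to a unit, so Theorem~\ref{zzthm1.11}(3) again gives $\delta(t) = 0$ for every LND $\delta$, and the homogeneity argument then kills every $x_{i}$ and $y_{i}$ while leaving the $z_{j}$ free. This strong LND-rigidity, combined with the noetherian prime PI structure of $W$ and $W^{\times} = \kk^{\times}$, feeds into \cite[Theorem 3.6]{BZ2} to conclude that $W$ is cancellative.

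For part (2), the argument is essentially the same. Iterating Theorem~\ref{zzthm2.5} together with Theorems~\ref{zzthm5.11} and~\ref{zzthm5.14} expresses $\csr(A/Z(A))$ for $A = W^{(1)} \otimes \cdots \otimes W^{(\ell)}$ as a monomial $t_{1}^{N_{1}}\cdots t_{\ell}^{N_{\ell}}$ in the polynomial subring $\kk[t_{1},\ldots,t_{\ell}] \subseteq A$, where each $t_{i}$ is the base variable of the $i$th tensor factor. Theorem~\ref{zzthm1.11}(3) then forces every LND of $A$ (or of any polynomial extension of $A$) to annihilate each $t_{i}$, and the combined $\ZZ^{\GWAdeg_{1}+\cdots+\GWAdeg_{\ell}}$-grading plus the pairwise coprimality assumptions within each factor kill the remaining generators. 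A final application of \cite[Theorem 3.6]{BZ2} completes the proof. The main obstacle I anticipate is the homogeneity/local-nilpotency step: since the $\GWAord_{i}$ are only assumed pairwise coprime, the commutation condition $\prod_{j} q_{j}^{\alpha_{j}} = q_{i}$ has infinitely many integer solutions $\alpha$, so one cannot immediately conclude $\delta(x_{i}) \in \kk[t]\, x_{i}$ from the commutation alone and must combine this constraint carefully with the fact that $\delta^{N}$ eventually annihilates each generator. Once that technical point is settled, the remaining ingredients (the $\cR$-discriminant formulas and the cancellation machinery from \cite{BZ2}) are available off the shelf.
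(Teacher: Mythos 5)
Your overall architecture matches the paper's: reduce to LND-rigidity via \cite[Theorem 3.6]{BZ2}, use Theorem \ref{zzthm5.14} (resp.\ Theorem \ref{zzthm2.5} for tensor products) together with Theorem \ref{zzthm1.11}(3) and $W^\times=\kk^\times$ to get $\delta(d)=0$, and then $\delta(t)=0$ since $W$ is a domain of characteristic zero. But the final step --- killing $\delta(x_i)$ and $\delta(y_i)$ --- is a genuine gap, and you say so yourself: the commutation relation $\delta(x_i)t=q_it\delta(x_i)$ does not isolate $\delta(x_i)$ because $\prod_j q_j^{\alpha_j}=q_i$ has infinitely many solutions $\alpha\in\ZZ^{\GWAdeg}$, and you never explain how local nilpotency closes this off. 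As written, the proof is incomplete at exactly the point where the conclusion $\LND(W)=\{0\}$ is supposed to be reached.

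The paper's fix is much more direct and avoids the grading entirely: once $\delta(t)=0$, the element $x_iy_i=h_i(t)$ lies in $\ker\delta$, and it is nonzero (indeed nonscalar, by \eqref{E6.0.1}). The kernel of a locally nilpotent derivation of a domain in characteristic zero is factorially closed (this is Makar-Limanov's observation \cite[p.~4]{ML}, recorded as \cite[Lemma 7.4]{CYZ1}): if $\delta(ab)=0$ with $a,b\neq 0$, then $\delta(a)=\delta(b)=0$. Applying this to $a=x_i$, $b=y_i$ gives $\delta(x_i)=\delta(y_i)=0$ for every $i$ in one line, with no case analysis on the $q_i$ and no use of the coprimality hypothesis at this stage. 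I recommend you replace your homogeneity argument by this one; the rest of your proposal (including the reduction to $\okk$ and the iterated use of Theorem \ref{zzthm2.5} in part (2)) then goes through as in the paper.
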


\begin{proof}
(1) By \cite[Theorem 3.6]{BZ2}, it suffices to prove that $W$ 
is LND-rigid. Let $\delta \in \LND(W)$ and let $d$ denote the 
$\cR$-discriminant of $W$ given in Theorem \ref{zzthm5.14}. 
Since we assume \eqref{E6.0.1}, $W^{\times}=\kk^{\times}$.
By Theorem \ref{zzthm1.11}(2),  $\delta(d)=0$. By the 
Leibniz rule and because $W$ is a domain, $\delta(t)=0$.
Since $x_iy_i \in \kk[t]$, then by \cite[Lemma 7.4]{CYZ1} and 
\cite[p. 4]{ML}, $\delta(x_i)=\delta(y_i)=0$. Thus, 
$\LND(A)=\{0\}$. 

(2) The result for tensor products of quantum GWAs is similar.
First we need to pass to the case when $\kk$ is algebraically 
closed. Then $d:=\csr(A/Z(A))$ exists by Theorem \ref{zzthm2.5}(3)
and \ref{zzthm5.14}. Then we can copy the proof of part (1)
with minor changes.
\end{proof}

\bibliographystyle{plain}

\end{document}